\newtheorem{theorem}{Theorem}[section]
\newtheorem{lemma}[theorem]{Lemma}
\newtheorem{proposition}[theorem]{Proposition}
\newtheorem{corollary}[theorem]{Corollary}
\newtheorem{remark}[theorem]{Remark}
\newtheorem{question}[theorem]{Question}
\newcommand\supp{\mathop{\rm supp}}
\newcommand\id{\mathop{\rm id}}
\newcommand\hh{\mathop{\rm h}}
\newcommand\eh{\mathop{\rm eh}}
\newcommand\nph{\varphi}
\newcommand\cb{\mathop{\rm cb}}
\newcommand\vn{\mathop{\rm VN}}
\newcommand\ball{\mathop{\rm ball}}
\newcommand{\cl}[1]{\mathcal{#1}}
\newcommand{\bb}[1]{\mathbb{#1}}
\newcommand\ah{\mathop{A_{\hh}}}
\newcommand\mah{\mathop{MA_{\hh}}}
\newcommand\ahg{\mathop{\ah(G)}}
\newcommand\mh{\mathop{\mah(G)}}
\newcommand\cbmh{\mathop{M^{\cb}A_{\hh}(G)}}
\newcommand\ev{\mathop{{\rm VN}_{\rm eh}}}
\newcommand{\ignore}[1]{ { } }
\newcommand{\M}{{\Bbb{M}}}
\newcommand{\cA}{{\mathcal{A}}}
\newcommand{\cB}{{\mathcal{B}}}
\newcommand{\bm}{{\frak m}}
\begin{document}

\title{Completely bounded bimodule maps and spectral synthesis}

\author[M. Alaghmandan]{M. Alaghmandan}
\address{Department of Mathematical Sciences,
Chalmers University of Technology and  the University of Gothenburg,
Gothenburg SE-412 96, Sweden}
\email{mahala@chalmers.se}

\author[I. G. Todorov]{I. G. Todorov}
\address{Pure Mathematics Research Centre, Queen's University Belfast, Belfast BT7 1NN, United Kingdom}
\email{i.todorov@qub.ac.uk}

\author[L. Turowska]{L. Turowska}
\address{Department of Mathematical Sciences,
Chalmers University of Technology and  the University of Gothenburg,
Gothenburg SE-412 96, Sweden}
\email{turowska@chalmers.se}

\date{31 December 2016}

\begin{abstract}
We initiate the study of the completely bounded multipliers of the Haagerup tensor product 
$A(G)\otimes_{\rm h} A(G)$
of two copies of the Fourier algebra $A(G)$ of a locally compact group $G$. 
If $E$ is a closed subset of $G$ we let $E^{\sharp} = \{(s,t) : st\in E\}$ and show that if $E^{\sharp}$ is a set of 
spectral synthesis for $A(G)\otimes_{\rm h} A(G)$ then $E$ is a set of local spectral synthesis for $A(G)$. 
Conversely, we prove that if $E$ is a set of spectral synthesis for $A(G)$ and $G$ is a Moore group then 
$E^{\sharp}$ is a set of spectral synthesis for $A(G)\otimes_{\rm h} A(G)$. 
Using the natural identification of the space of all completely bounded weak* continuous 
$\vn(G)'$-bimodule maps with the dual of $A(G)\otimes_{\rm h} A(G)$, we show that, 
in the case $G$ is weakly amenable, 
such a map leaves the multiplication algebra of $L^{\infty}(G)$ invariant if and only if its support 
is contained in the antidiagonal of $G$. 
\end{abstract}

\maketitle

\tableofcontents

\section{Introduction}\label{s_intro}

The connections between Harmonic Analysis and Operator Theory
originating from the seminal papers of W. Arveson \cite{a} and N. Varopoulos \cite{varopoulos}
have been fruitful and far-reaching. 
A particular instance of this interaction is the 
relation between Schur and Herz-Schur multipliers \cite{ch, haag}
that has been prominent in applications, for example to approximation properties 
of group operator algebras (see {\it e.g.} \cite{bo}). 
It is well-known that, given a locally compact second countable group $G$, the Schur multipliers on $G\times G$
can be identified with those (completely) bounded weak* continuous maps on the space $\cl B(L^2(G))$
of all bounded operators on $L^2(G)$ (here $G$ is equipped with left Haar measure)
that are also bimodular over $L^{\infty}(G)$, where the latter is viewed as an algebra of multiplication operators on $L^2(G)$. 
The right invariant part of the space of Schur multipliers  
(which arises from the functions $\nph$ on $G\times G$ that satisfy the condition $\nph(sr,tr) = \nph(s,t)$) 
consists precisely of those maps
that, in addition to the aforementioned properties, preserve the von Neumann algebra $\vn(G)$ of $G$.

The original motivation behind the present work was the development of 
a counterpart of the latter result in a setting where the places of $\vn(G)$ and $L^{\infty}(G)$ are exchanged. 
The space of all completely bounded weak* continuous $\vn(G)$-bimodule maps on $\cl B(L^2(G))$
has played a distinctive role in Operator Algebra Theory and have lately been prominent through the theory of 
locally compact quantum groups (see {\it e.g.} \cite{hnr} and \cite{jnr}). Those such 
maps that also preserve the multiplication 
algebra of $L^{\infty}(G)$ have been studied since the 1980's and are known to 
arise from regular Borel measures on $G$ (see \cite{gh, nrs, stor}).
However, a characterisation, analogous to the right invariance in the context of Schur multipliers -- 
and one that uses only harmonic-theoretic properties -- was not known. 
In the present paper, we establish such a characterisation and 
observe that it can be formulated in the language of spectral synthesis: it is
equivalent to the statement that the anitdiagonal of $G$ is a Helson set with respect to the
Haagerup tensor product $A(G)\otimes_{\rm h} A(G)$ of two copies of the Fourier algebra $A(G)$ of $G$. 
Our investigation highlights the connections between completely bounded bimodule maps
and spectral synthesis, which have not received substantial attention until now,
despite the importance of both notions in modern Analysis.

The aforementioned result 
required the development of a ground theory of bivariate Herz-Schur multipliers 
and served as a motivation to study questions of spectral synthesis in 
$A(G)\otimes_{\rm h} A(G)$.
Our results show that, with respect to spectral synthesis, the latter algebra is
better behaved than the seemingly more natural $A(G\times G)$, 
and point to substantial distinctions between these two algebras.
Indeed, for a vast class of groups we establish transference of spectral synthesis between $A(G)$ and $A(G)\otimes_{\rm h} A(G)$, 
while such result does not hold
for $A(G\times G)$ unless $G$ is virtually abelian.

In more detail, the paper is organised as follows. After collecting preliminaries and setting notation in Section \ref{s_prel}, 
we study, in Section \ref{s_aim}, the bivariate Fourier algebra $\ahg \stackrel{def}{=}A(G)\otimes_{\rm h} A(G)$ and establish some 
if its basic properties, highlighting the  
rather well-known fact that it is a regular commutative semi-simple Banach algebra with Gelfand spectrum $G\times G$.
Viewing $\ahg$ as a function algebra, we examine the space of its completely bounded multipliers, which can be thought of as 
a bivariate version of Herz-Schur multipliers, and show, among other things, that 
this algebra is weakly amenable if and only if the group $G$ is weakly amenable. 
We obtain a characterisation of the completely bounded multipliers of $\ahg$ in terms of 
(bounded) multipliers on products with finite groups, providing a version of a result from \cite{ch} (see Proposition \ref{p_McbAh(G)-finite-groups}). 
We show that the elements of the extended Haagerup tensor product $A(G)\otimes_{\eh}A(G)$ can be viewed as
separately continuous functions, an identification needed thereafter. 

In Section \ref{s_ss}, we study the question of spectral synthesis for $\ahg$. Note that the dual of $\ahg$
coincides with the extended Haagerup tensor product $\vn_{\rm eh}(G) \stackrel{def}{=} \vn(G)\otimes_{\eh}\vn(G)$
which, in turn, can be canonically identified, {\it via} a classical result of U. Haagerup's \cite{haag}, with the space of 
completely bounded weak* continuous $\vn(G)'$-bimodule maps (here $\vn(G)'$ denotes the commutant of $\vn(G)$). 
Thus, the classical theory of commutative Banach algebras allows us to associate to each such map its support,
a closed subset of $G\times G$. 
Viewing $\vn_{\rm eh}(G)$ as a (completely contractive) 
module over $\ahg$, we obtain bivariate versions of some classical results of P. Eymard \cite{eymard}. 
The main results in Section \ref{s_ss} are related to transference of spectral synthesis: 
associating to a subset $E\subseteq G$ the subset $E^{\sharp} = \{(s,t)\in G\times G : st\in E\}$ of $G\times G$, 
we show that if $E^{\sharp}$ is is a set of spectral synthesis for $\ahg$ then $E$ is a set of local spectral synthesis for $A(G)$. 
Conversely, if $E$ is a set of spectral synthesis for $A(G)$ and $G$ is a Moore group then $E^{\sharp}$ is a set of 
spectral synthesis for $\ahg$. Thus, for Moore groups, the sets $E$ and $E^{\sharp}$ satisfy spectral synthesis simultanenously. 
These results should be compared with other transference results in the literature, see {\it e.g.} \cite{lt}, \cite{stt} and \cite{st}, 
and are a part of a programme of relating harmonic analytic, one-variable, properties, to operator theoretic, two-variable, ones 
\cite{t_serdika}.

In Section \ref{s_vag}, we assume that $G$ is a virtually abelian group, and show that, in this case,
transference carries over to the set $E^* = \{(s,t)\in G\times G : ts^{-1}\in E\}$. 
This is obtained as a consequence of the fact that, for such groups, the flip of variables is a well-defined bounded map on 
$A_{\rm h}(G)$. 

Section \ref{s_ibm} is focused around the question of how the support of a map arising from an element 
of $\vn_{\eh}(G)$ influences the structure of the map. Our results demonstrate that the support contains information about the invariant 
subspaces of the map (see Theorem \ref{th_invars} and Corollaries \ref{c_no1} and \ref{c_no2}). 
As a consequence, we show that a completely bounded weak* continuous $\vn(G)'$-bimodule map 
leaves the multiplication algebra of $L^{\infty}(G)$ invariant if and only if its support is contained in the antidiagonal of $G$. 
This gives an intrinsic, harmonic analytic, characterisation of this class of maps. 

Operator space tensor products and, more generally, operator space theoretic concepts and results, 
play a prominent role in our approach. Our main references in this direction are \cite{blm} and \cite{er_book}.
In addition, we use in a crucial way results and techniques about masa-bimodules in $\cl B(L^2(G))$, whose basic theory 
was developed in \cite{a} and \cite{eks}.


\section{Preliminaries}\label{s_prel}

In this section, we introduce some basic concepts that will be needed in the sequel and set notation. 
For a normed space $\cl X$, we let $\ball(\cl X)$ be the unit ball of $\cl X$,
and $\cl B(\cl X)$ (resp. $\cl K(\cl X)$)
be the algebra of all bounded linear (resp. compact) operators on $\cl X$.
If $H$ is a Hilbert space and $\xi,\eta\in H$, we denote by
$\xi\otimes\eta^*$ the rank one operator on $H$ given by
$(\xi\otimes\eta^*)(\zeta) = (\zeta,\eta)\xi$, $\zeta\in H$.
By $\omega_{\xi,\eta}$ we denote the vector functional on $\cl B(H)$
defined by $\omega_{\xi,\eta}(T) = (T\xi,\eta)$.
The pairing between elements of a normed space
$\cl X$ and those of its dual $\cl X^*$ will be denoted by $\langle \cdot,\cdot\rangle_{\cl X,\cl X^*}$;
when no risk of confusion arises, we write simply $\langle \cdot,\cdot\rangle$.
By $\M_n(\cl X)$ we denote the
space of all $n$ by $n$ matrices with entries in $\cl X$; we set $\M_n = \M_n(\bb{C})$.
We let $CB(\cl X)$ be the (operator) space of all completely bounded maps on
an operator space $\cl X$.

The algebraic tensor product of vector spaces $\cl X$ and $\cl Y$
will be denoted by $\cl X\odot \cl Y$; if
$\cl X$ and $\cl Y$ are Banach spaces, we let
$\cl X\otimes_{\gamma}\cl Y$ be their Banach projective tensor product.
If $H$ and $K$ are Hilbert spaces, we denote by $H\otimes K$
their Hilbertian tensor product.
We let $\cl X\hat{\otimes} \cl Y$ denote the operator projective, and
$\cl X\otimes_{\rm h} \cl Y$ the Haagerup, tensor product of the operator spaces $\cl X$ and $\cl Y$.
By $\cl X\otimes_{\rm eh} \cl Y$ we will denote the extended
Haagerup tensor product of $\cl X$ and $\cl Y$; we refer the reader to \cite{er}
for its definition and properties.
If $\cl X$ and $\cl Y$ are dual operator spaces, their
weak* spacial tensor product will be denoted by $\cl X\bar\otimes\cl Y$,
and their $\sigma$-Haagerup tensor product by $\cl X\otimes_{\sigma\hh}\cl Y$.
Note that, in the latter case, $\cl X\otimes_{\rm eh} \cl Y$ coincides with the
weak* Haagerup tensor product of $\cl X$ and $\cl Y$ introduced in \cite{bs}.
We often use the same symbol to denote both a bilinear map and its linearisation
through a tensor product.

Recall that a Banach algebra $\cA$ equipped with an operator space structure is called
\emph{completely contractive}  if
 \[
 \| [a_{i,j} b_{k,l} ] \|_{\M_{mn}(\cA)} \leq \| [a_{i,j}]\|_{\M_n(\cl A)} \|[b_{k,l}]\|_{\M_m(\cA)}
 \]
for every $ [a_{i,j}] \in \M_n(\cA)$ and $[b_{k,l}] \in \M_m(\cA)$ and $n,m \in \Bbb{N}$.
Thus, if $\cl A$ is a completely contractive  Banach algebra then the linearisation of the product
extends to a completely contractive map $m_{\cl A} : \cl A\hat{\otimes}\cl A\to \cl A$.

Let $\cl A$ be a commutative regular semi-simple 
completely contractive Banach algebra with Gelfand spectrum $\Omega$;
thus, $\cl A$ can be thought of as a subalgebra of the algebra $C_0(\Omega)$
of all continuous functions on $\Omega$ vanishing at infinity.
A continuous function $b : \Omega\to\bb{C}$ is called a \emph{multiplier} of
$\cl A$ if $b\cl A\subseteq \cl A$; in this case, we have a well-defined 
map $\bm_b$ on $\cl A$, given by $\bm_b(a) = ba$, which is
automatically bounded.
If the map $\bm_b$ is moreover completely bounded, $b$ is called
a \emph{completely bounded multiplier}.
We denote by  $M\cA$ (resp. $M^{\cb}\cA$) the space of all
multipliers (resp. completely bounded multipliers) of $\cl A$.
It is known that a (bounded) linear map $T : \cl A\to \cl A$ is of the form $T  = \bm_b$ for some $b\in M\cl A$
if and only if $T(x)y = xT(y)$ for all $x,y\in \cA$ (see {\it e.g.} \cite[Proposition~2.2.16]{kaniuth}).
Note that $M\cA$ (resp. $M^{\cb}\cA$) is a closed subalgebra of $\mathcal{B}(\cA)$
(resp. $CB(\cA)$).
If $b\in M^{\cb}\cA$, we denote by
$\|b\|_{\rm cbm}$ the completely bounded norm of $\bm_b$; we often identify the functions $b\in M^{\cb}\cl A$
with the corresponding linear transformations $\bm_b$.
Note that, if $a\in \cl A$, then
\[
\| \bm_a^{(n)} [a_{k,l} ] \|_{\M_{n}(\cA)} = \|  [a a_{k,l} ] \|_{\M_{n}(\cA)}  \leq \| a \|_{\cA} \|[a_{k,l}]\|_{\M_n(\cA)}
\]
for every $[a_{k,l}] \in \M_n(\cA)$ and every $n \in \Bbb{N}$.
Therefore,  the mapping $a \mapsto \bm_a$ from $\cl A$ into $M^{\cb} \cA$ is a contraction.

We next recall some basic facts from \cite{eymard} and \cite{ch}.
Let $G$ be a locally compact group. The Haar measure evaluated at a Borel set $E\subseteq G$
will be denoted by $|E|$, and integration with respect to it along the variable $s$
will be denoted by $ds$. As customary, $a\ast b$ denotes the convolution, whenever defined,
of the functions $a$ and $b$.
For $t\in G$, we let $\lambda_t$ be the unitary operator
on $L^2(G)$, given by $\lambda_t f(s) = f(t^{-1}s)$, $s\in G$, $f\in L^2(G)$.
We let $M(G)$ be the Banach *-algebra of all complex Borel measures on $G$
and use the symbol $\lambda$ to denote
the left regular *-representation of $M(G)$ on $L^2(G)$;
thus,
$$(\lambda(\mu) f)  = \int_G \lambda_sf  d\mu(s), \ \ \ \mu\in M(G), f\in L^2(G),$$
where the integral is understood in the weak sense.
We identify $L^1(G)$ with a *-subalgebra of $M(G)$ in the canonical way.

We let $\vn(G)$ (resp. $C^*_r(G)$, $C^*(G)$) be
the von Neumann algebra (resp. the reduced C*-algebra, the full C*-algebra) of $G$. As usual,
$A(G)$ (resp. $B(G)$) stands for the Fourier (resp. the Fourier-Stieltjes) algebra of $G$.
Thus,
$$C_r^*(G) = \overline{\{\lambda(f) : f\in L^1(G)\}}, \ \ \vn(G) = \overline{C_r^*(G)}^{w^*},$$
$$B(G) = \{(\pi(\cdot)\xi,\eta) : \pi : G\to \cl B(H) \mbox{ cont. unitary representation}, \xi,\eta\in H\},$$
and
$A(G)$ is the collection of the functions on $G$ of the form $s\to (\lambda_s\xi,\eta)$, where $\xi,\eta\in L^2(G)$;
see \cite{eymard} for details.
We denote by $\|\cdot\|_{A}$ the norm of $A(G)$.
Note that the dual of $A(G)$ (resp. $C^*(G)$) can be canonically identified with $\vn(G)$ (resp. $B(G)$).
More specifically, if $\phi\in A(G)$ and $\xi,\eta\in L^2(G)$ are such that
$\phi(s) = (\lambda_s\xi,\eta)$, $s\in G$, then
\begin{equation}\label{eq_vndu}
\langle \phi,T\rangle = (T\xi,\eta), \ \ \ T\in \vn(G).
\end{equation}
We equip $A(G)$ (resp. $B(G)$) with the operator space structure
arising from the latter identification.
Note that both $A(G)$ and $B(G)$ are completely contractive Banach algebras
with respect to these operator space structures.
For each  $\psi\in MA(G)$, the dual $\frak{m}_\psi^*$ of the map $\frak{m}_\psi$ acts on $\vn(G)$;
in fact, $\frak{m}_\psi^*(\lambda_t) = \psi(t)\lambda_t$, $t\in G$, and $\frak{m}_\psi^*(\lambda(f)) = \lambda(\psi f)$,
$f\in L^1(G)$.
Note that a multiplier $\psi\in MA(G)$ is completely bounded
precisely when $\frak{m}_\psi^*$ is completely bounded; in this case, $\|\psi\|_{\rm cbm} = \|\frak{m}_\psi^*\|_{\rm cb}$.
We set $\psi \cdot T = \frak{m}_\psi^*(T)$.
The elements of $M^{\cb}A(G)$ are called Herz-Schur multipliers and were introduced and originally studied in \cite{ch}.

Let $H$ and $K$ be separable Hilbert spaces and
$\cl M\subseteq \cl B(H)$ and $\cl N\subseteq \cl B(K)$ be von Neumann algebras.
Every element $u\in \cl M\otimes_{\eh}\cl N$
has a representation
\begin{equation}\label{eq_cs}
u = \sum_{i =1}^{\infty} a_i\otimes b_i,
\end{equation}
where $(a_i)_{i\in\bb{N}} \subseteq \cl M$ and $(b_i)_{i \in \bb{N}} \subseteq \cl N$
are sequences such that
$\sum_{i=1}^{\infty} a_ia_i^*$ and $\sum_{i = 1}^{\infty} b_i^*b_i$
are weak* convergent.
In this case,  the series (\ref{eq_cs}) converges in the weak* topology of
$\cl M\otimes_{\eh}\cl N$ with respect to the
completely isometric identification \cite{bs}
\begin{equation}\label{eq_mnehd}
\cl M\otimes_{\eh}\cl N \equiv (\cl M_*\otimes_{\hh}\cl N_*)^*,
\end{equation}
where $\cl M_*$ and $\cl N_*$ denote the preduals of $\cl M$ and $\cl N$, respectively.
Following \cite{bs}, call (\ref{eq_cs}) a w*-representation of $u$.
Denoting by $A$ (resp. $B$) the row (resp. column) operator $(a_i)_{i\in \bb{N}}$
(resp. $(b_i)_{i \in \bb{N}}$), we write (\ref{eq_cs}) as $u = A\odot B$.
Every such $u$ gives rise to a completely bounded
weak* continuous $\cl M',\cl N'$-module map
$\Phi_u : \cl B(K,H)\to \cl B(K,H)$
given by
\begin{equation}\label{eq_Phi_u}
\Phi_u(T) = \sum_{i =1}^{\infty} a_i T b_i, \ \ \ T\in \cl B(K,H),
\end{equation}
and the map $u\to \Phi_u$ is a complete isometry from $\cl M\otimes_{\eh}\cl N$ onto
the space $CB_{\cl M',\cl N'}^{w^*}(\cl B(K,H))$ of all weak* continuous completely bounded
$\cl M',\cl N'$-module maps on $\cl B(K,H)$ \cite{bs}.
Note that the algebraic tensor product
$\cl M\odot\cl N$ can be viewed in a natural way as a (weak* dense) subspace
of $\cl M\otimes_{\eh}\cl N$.


\section{Multipliers of bivariate Fourier algebras}\label{s_aim}

In this section, we introduce a natural bivariate version of Herz-Schur multipliers
and develop their basic properties. 
We set
$$\ahg = A(G)\otimes_{\hh} A(G) \ \mbox{ and } \ \ev(G) = \vn(G)\otimes_{\eh}\vn(G).$$
According to (\ref{eq_mnehd}), we have a completely isometric identification
\begin{equation}\label{eq_dah}
\ahg\mbox{}^* \ \equiv \ev(G);
\end{equation}
under this identification,
\begin{equation}\label{eq_lambdas}
\langle \phi \otimes \psi,\lambda_s\otimes\lambda_t\rangle = \phi(s)\psi(t),
\  \ \ \phi,\psi\in A(G), s,t\in G.
\end{equation}

We proceed with some certainly well-known considerations;
because of the frequent lack of precise references, we provide the full details,
which also serve our aim to set the appropriate context and notation for their
subsequent applications.
We first note that the natural injection
$$\iota_G : A(G)\to C_0(G)$$
is completely contractive. Indeed, let $U = [u_{i,j}]_{i,j}\in \M_n(A(G))$
and associate to $U$ the map $F_U : \vn(G)\to \M_n$ given by
$F_U(T) = [\langle u_{i,j},T\rangle]_{i,j}$.
Then
\begin{eqnarray*}
\|\iota_G^{(n)}(U)\|_{\M_n(C_0(G))}
& =& \sup_{s\in G} \|[u_{i,j}(s)] \|_{\M_n}
= \sup_{s\in G} \|[\langle u_{i,j},\lambda_s\rangle] \|_{\M_n}\\
& \leq & \sup\{\|[\langle u_{i,j},T\rangle] \|_{\M_n} : T\in \ball(\vn((G))\}\\
& = & \sup\{\|F_U(T)\|_{\M_n} : T\in \ball(\vn((G))\}\\
& = & \|F_U\| \leq \|F_U\|_{\cb} = \|U\|_{\M_n(A(G))}.
\end{eqnarray*}
Thus, the map
\begin{equation}\label{eq_hgh0}
\iota_{\hh} \stackrel{def}{=} \iota_G\otimes_{\hh}\iota_G : \ahg \to C_0(G)\otimes_{\hh} C_0(G)
\end{equation}
is completely contractive.
On the other hand, there is a natural contractive injection 
\begin{equation}\label{eq_var}
C_0(G)\otimes_{\hh} C_0(G) \to C_0(G) \otimes_{\min} C_0(G) \equiv C_0(G\times G),
\end{equation}
which allows us to view the elements of $C_0(G)\otimes_{\hh} C_0(G)$ as
continuous functions on $G\times G$ (vanishing at infinity).
In fact, $C_0(G)\otimes_{\hh} C_0(G)$ is a (Banach) algebra under pointwise addition and
multiplication and, by the Grothendieck inequality, coincides up to renorming
with the Varopoulos algebra $C_0(G)\otimes_{\gamma} C_0(G)$.
If $v\in \ahg$ then, in view of (\ref{eq_hgh0}) and (\ref{eq_var}),
\begin{equation}\label{eq_infh}
\|\iota_{\hh}(v)\|_{\infty}\leq \|v\|_{\hh}.
\end{equation}

The duality in the next lemma is the one arising from the
identification (\ref{eq_dah}).

\begin{lemma}\label{l_iotah}
If $s,t\in G$ and $v\in \ahg$ then
\begin{equation}\label{eq_wst}
\langle v, \lambda_s\otimes\lambda_t\rangle = \iota_{\hh}(v)(s,t).
\end{equation}
In particular, the map $\iota_{\hh}$ is injective.
\end{lemma}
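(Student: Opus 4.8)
The plan is to verify the identity \eqref{eq_wst} first on elementary tensors and then pass to the general case by density and continuity. For an elementary tensor $v = \phi\otimes\psi$ with $\phi,\psi\in A(G)$, the left-hand side equals $\langle\phi\otimes\psi,\lambda_s\otimes\lambda_t\rangle = \phi(s)\psi(t)$ by \eqref{eq_lambdas}, while $\iota_{\hh}(v) = \iota_G(\phi)\otimes\iota_G(\psi)$, whose value at $(s,t)$ under the identification \eqref{eq_var} with $C_0(G\times G)$ is exactly $\phi(s)\psi(t)$. So the two sides agree on $A(G)\odot A(G)$, which is dense in $\ahg$.

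Next I would argue that both sides of \eqref{eq_wst}, viewed as functions of $v\in\ahg$ for fixed $s,t$, are bounded linear functionals. For the left-hand side this is immediate: $\lambda_s\otimes\lambda_t\in\vn(G)\odot\vn(G)\subseteq\ev(G) = \ahg^*$, so $v\mapsto\langle v,\lambda_s\otimes\lambda_t\rangle$ is weak*-continuous, hence norm-continuous. For the right-hand side, evaluation at $(s,t)$ is continuous on $C_0(G)\otimes_{\hh}C_0(G)$ (with its sup-norm, by \eqref{eq_infh}, since $\|\iota_{\hh}(v)\|_\infty\le\|v\|_{\hh}$), and $\iota_{\hh}$ is completely contractive by \eqref{eq_hgh0}; thus $v\mapsto\iota_{\hh}(v)(s,t)$ is norm-continuous on $\ahg$. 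Two norm-continuous functionals that agree on the dense subspace $A(G)\odot A(G)$ agree everywhere, which establishes \eqref{eq_wst}.

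For the ``in particular'' clause, suppose $v\in\ahg$ with $\iota_{\hh}(v) = 0$. Then by \eqref{eq_wst} we have $\langle v,\lambda_s\otimes\lambda_t\rangle = 0$ for all $s,t\in G$, hence $\langle v,T\rangle = 0$ for every $T$ in the linear span of $\{\lambda_s\otimes\lambda_t : s,t\in G\}$. This span is weak*-dense in $\ev(G) = \ahg^*$: indeed the span of $\{\lambda_s : s\in G\}$ is weak*-dense in $\vn(G)$, and one checks that the algebraic tensor product $\vn(G)\odot\vn(G)$ is weak*-dense in $\vn(G)\otimes_{\eh}\vn(G)$ (as noted at the end of Section~\ref{s_prel}), so iterating density in each variable gives weak*-density of the span of the elementary products $\lambda_s\otimes\lambda_t$. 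Since $v$ annihilates a weak*-dense subspace of its own dual space, $v = 0$; thus $\iota_{\hh}$ is injective.

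The only real point requiring care is the weak*-density argument in the last paragraph — specifically, that passing from weak*-density of $\mathrm{span}\{\lambda_s\}$ in $\vn(G)$ to weak*-density of $\mathrm{span}\{\lambda_s\otimes\lambda_t\}$ in $\vn(G)\otimes_{\eh}\vn(G)$ is legitimate. One way to handle this cleanly is to note that if $v$ annihilates all $\lambda_s\otimes\lambda_t$, then for each fixed $\psi\in A(G)$ the ``slice'' $v_\psi\in A(G)$ (obtained by pairing the second leg against $\lambda_t$ and integrating, or more precisely by the functional $\phi\mapsto\langle v,\lambda_\cdot\otimes\lambda_t\rangle$ machinery) annihilates all $\lambda_s$, forcing it to vanish by semisimplicity of $A(G)$ / the fact that $\{\lambda_s\}$ separates points of $A(G)$; running this in both variables kills $v$. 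I expect the elementary-tensor computation and the two continuity verifications to be entirely routine, with this density/slicing step being the part where a careful reference or a short explicit argument is warranted.
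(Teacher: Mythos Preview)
Your proof is correct and follows essentially the same route as the paper: verify \eqref{eq_wst} on elementary tensors, extend by density using continuity of both sides, and deduce injectivity from weak*-density of $\mathrm{span}\{\lambda_s\otimes\lambda_t\}$ in $\ev(G)$. The only cosmetic difference is that the paper dispatches the weak*-density in one line by invoking Kaplansky's Density Theorem, whereas you decompose it into weak*-density of $\mathrm{span}\{\lambda_s\}$ in $\vn(G)$ together with weak*-density of $\vn(G)\odot\vn(G)$ in $\ev(G)$; both are valid, and your slicing alternative would also work.
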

\begin{proof}
Let $s,t\in G$, and suppose that $v = \phi \otimes \psi$ for some $\phi,\psi\in A(G)$.
Then
$$\langle v, \lambda_s\otimes\lambda_t\rangle
= \langle \phi \otimes \psi, \lambda_s\otimes\lambda_t\rangle
= \langle \phi, \lambda_s\rangle \langle \psi,\lambda_t\rangle
= \phi(s) \psi(t) = \iota_{\hh}(\phi\otimes \psi)(s,t).$$
It follows that (\ref{eq_wst}) holds for all $v\in A(G)\odot A(G)$.
For every $T\in \ev(G)$, the map
$v \to \langle v, T\rangle$ is norm  continuous.
On the other hand, in view of (\ref{eq_var}),
$$|\iota_{\hh}(v)(s,t)| \leq \|\iota_{\hh}(v)\|_{\infty} \leq \|\iota_{\hh}(v)\|_{\hh}.$$
Identity (\ref{eq_wst}) now follows from the density of $A(G)\odot A(G)$ in
$\ahg$.

Assuming $\iota_{\hh}(v) = 0$, we have that $\iota_{\hh}(v)(s,t) = 0$ for all $s,t\in G$.
By (\ref{eq_wst}), $\langle v, \lambda_s\otimes\lambda_t\rangle = 0$ for all $s,t\in G$.
An application of Kaplansky's Density Theorem shows that
the set $\{\lambda_s\otimes \lambda_t : s,t\in G\}$
spans a weak* dense subspace of $\ev(G)$; it now follows that $v = 0$.
\end{proof}

Since the Haagerup norm is dominated by the operator projective one,
the identity map on $A(G)\odot A(G)$ extends to a complete contraction
\begin{equation}\label{eq_projective_to_Ah}
\hat{\iota} : A(G)\hat{\otimes} A(G) \to \ahg.
\end{equation}
Identifying $A(G)\hat{\otimes} A(G)$ with $A(G\times G)$
(see \cite[Chapter~16]{er_book}), we thus consider $\hat{\iota}$ as a complete contraction
from $A(G\times G)$ into $\ahg$.
Note that
$\iota_{\hh}\circ \hat{\iota} = \iota_{G\times G}$;
indeed, the latter identity is straightforward on the algebraic tensor product
$A(G)\odot A(G)$, and hence holds by density and continuity.
Since $\iota_{G\times G}$ is injective, we conclude that $\hat{\iota}$ is
injective.
The dual $\hat{\iota}^* : \ev(G) \to \vn(G\times G)$ of the map
$\hat{\iota} : A(G\times G)\to \ahg$ is easily seen to coincide with the canonical inclusion of
$\ev(G)$ into $\vn(G\times G)$, and is hence 
(completely contractive and) injective \cite[Corollary 3.8]{bs}.

In the sequel, we often suppress the notations $\iota_{G\times G}$,
$\hat{\iota}$ and $\iota_{\hh}$ and, by virtue of Lemma \ref{l_iotah},
consider the elements of $\ahg$ as (continuous) functions
on $G\times G$.

The next proposition contains the main facts that we will need about the algebra $\ahg$.
Recall that a normed  algebra $(\mathcal{A}, \|\cdot\|_{\mathcal{A}})$ is said to have a
\emph{(left) bounded approximate unit} \cite{w}, if there exists a constant $C>0$
so that for every $v \in \mathcal{A}$ and every $\epsilon > 0$, there exists $u \in \mathcal{A}$
such that $\|u\|_{\mathcal{A}}\leq C$ and $\| uv- v\|_{\mathcal{A}}<\epsilon$.

\begin{proposition}\label{p_ccehag} 
The following statements hold true:

(i) \ \  The space $\ahg$ is a regular semisimple Tauberian
completely contractive Banach algebra with respect to the
operation of pointwise multiplication,
whose Gelfand spectrum can be identified with $G \times G$.

(ii) \ The map $\iota_{\hh}$ is an algebra homomorphism.

(iii) The algebra $\ahg$ has a bounded approximate identity if and only if $G$ is amenable. 
Furthermore, if $G$ is amenable then the bounded approximate identity can be chosen to be compactly supported.
\end{proposition}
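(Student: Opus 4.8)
The plan is to transfer the problem to the Fourier algebra $A(G)$, for which the corresponding statement is classical (Leptin's theorem: $A(G)$ has a bounded approximate identity if and only if $G$ is amenable, and in that case a compactly supported one can be chosen). For the forward (easy) direction, suppose $\ahg$ has a bounded approximate identity, bounded by $C$. Using the completely contractive algebra homomorphism $\iota_{\hh} : \ahg \to C_0(G)\otimes_{\hh}C_0(G)$ together with the restriction/slice maps, I would produce from a bounded approximate identity of $\ahg$ a bounded approximate identity for $A(G)$, whence $G$ is amenable by Leptin's theorem. Concretely, fix $\phi_0 \in A(G)$ with $\phi_0(e)=1$; given $\phi\in A(G)$ and $\epsilon>0$, choose $u\in\ahg$ with $\|u\|_{\hh}\le C$ and $\|u(\phi\otimes\phi_0)-\phi\otimes\phi_0\|_{\hh}<\epsilon$, then apply a completely bounded slice map $\ahg\to A(G)$ obtained by pairing the second leg against a suitable functional (e.g. evaluation at $e$ realised through $\vn(G)$) to land back in $A(G)$ with controlled norm; here one uses that the second variable of $A(G)\otimes_{\hh}A(G)$ can be sliced by elements of $\vn(G)$ to give a completely bounded map into $A(G)$.

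For the converse, assume $G$ is amenable. By Leptin's theorem, $A(G)$ has a bounded approximate identity $(e_\alpha)$ with $\|e_\alpha\|_A\le C$, and one may take each $e_\alpha$ compactly supported. I would show that $(e_\alpha\otimes e_\alpha)_\alpha$ is a bounded approximate identity for $\ahg = A(G)\otimes_{\hh}A(G)$. Boundedness is immediate since $\|e_\alpha\otimes e_\alpha\|_{\hh}\le \|e_\alpha\|_A^2\le C^2$. For the approximation property, it suffices by density and a standard $3\epsilon$-argument to check it on elementary tensors $\phi\otimes\psi$ with $\phi,\psi\in A(G)$: then $(e_\alpha\otimes e_\alpha)(\phi\otimes\psi)=(e_\alpha\phi)\otimes(e_\alpha\psi)$, and
\[
\|(e_\alpha\phi)\otimes(e_\alpha\psi)-\phi\otimes\psi\|_{\hh}
\le \|(e_\alpha\phi-\phi)\otimes e_\alpha\psi\|_{\hh}+\|\phi\otimes(e_\alpha\psi-\psi)\|_{\hh},
\]
which is bounded by $\|e_\alpha\phi-\phi\|_A\,\|e_\alpha\psi\|_A + \|\phi\|_A\,\|e_\alpha\psi-\psi\|_A \le C\|e_\alpha\phi-\phi\|_A+\|\phi\|_A\|e_\alpha\psi-\psi\|_A\to 0$. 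The usual interchange-of-limits argument (first pass to a subnet making $\|e_\alpha\phi-\phi\|_A\to 0$, etc., or simply use that $(e_\alpha)$ is a genuine approximate identity so both terms tend to $0$ along the same net) then upgrades this to a bounded approximate identity on all of $\ahg$. Finally, if each $e_\alpha$ is supported in a compact set $K_\alpha\subseteq G$, then $e_\alpha\otimes e_\alpha$ is supported in $K_\alpha\times K_\alpha$, which is compact in $G\times G$; since the Gelfand spectrum of $\ahg$ is $G\times G$ by part (i), this is the asserted compact support.

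The main obstacle I anticipate is the forward direction: extracting an approximate identity of $A(G)$ from one of $\ahg$ requires a careful use of Haagerup-tensor slice maps and the fact that a bounded approximate identity for the Haagerup tensor product forces one for each factor. One has to check that the slicing map is not merely bounded but behaves well enough on the Haagerup norm (it is completely bounded, which suffices), and that pairing the second leg against a fixed vector functional does not destroy the approximate-identity estimate. An alternative, possibly cleaner route to this direction is to invoke a general principle that if a (completely contractive) Banach algebra $\cl B$ admits a completely bounded algebra homomorphism onto a dense subalgebra of $\cl A$ that is also a $\cl B$-module map in a compatible way, then a bounded approximate identity for $\cl B$ yields one for $\cl A$; applying this with $\cl B = \ahg$ and $\cl A = A(G)$ via the slice map reduces everything to Leptin's theorem. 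Either way, the amenability characterisation is then immediate from the classical one for $A(G)$.
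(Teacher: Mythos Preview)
Your proof is correct and essentially identical to the paper's: both use $e_\alpha\otimes e_\alpha$ for the easy direction, and for the converse both slice against a point evaluation $\id\otimes\delta_s:\ahg\to A(G)$ (the paper picks $s$ with $v(s)\neq 0$ where you fix $s=e$ and introduce an auxiliary $\phi_0$) to obtain a bounded approximate \emph{unit} for $A(G)$, then invoke Wichmann's theorem \cite{w} to upgrade to a bounded approximate identity before applying Leptin. Your anticipated obstacle dissolves once you observe that $\id\otimes\delta_s$ is a (completely) contractive algebra homomorphism, since $\delta_s$ is a character of $A(G)$.
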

\begin{proof}
(i), (ii) Let
$$m : (A(G)\odot A(G))\times (A(G)\odot A(G)) \to A(G)\odot A(G)$$
be the map given by
$$m(\phi \otimes \psi, \phi'\otimes \psi') = (\phi\phi')\otimes (\psi\psi').$$
By \cite[Section 9.2]{daws}, $m$ linearises to a
completely bounded bilinear map
$$m_{\hh} : \ahg\hat{\otimes} \ahg  \to \ahg,$$
turning $\ahg$ into a commutative completely contractive Banach algebra.

Let $w = \phi \otimes \psi$ and $w' = \phi'\otimes \psi'$ for some $\phi,\psi,\phi',\psi'\in A(G)$; then
\begin{eqnarray*}
\iota_{\hh}(m(w,w'))(s,t)
& = & \iota_{\hh}((\phi\phi')\otimes (\psi\psi'))(s,t)
= (\phi\phi')(s)(\psi\psi')(t)\\
& = & (\phi\otimes \psi)(s,t) (\phi'\otimes \psi')(s,t)
=
\iota_{\hh}(w)(s,t) \iota_{\hh}(w')(s,t).
\end{eqnarray*}
By the continuity of $m$, $\iota_{\hh}(m(w,w')) = \iota_{\hh}(w)\iota_{\hh}(w')$ for all $w,w'\in \ahg$ and
$\iota_{\hh}$ is a homomorphism. 
Therefore $m$ coincides with the pointwise multiplication.
The fact that the Gelfand spectrum of $\ahg$ coincides with $G\times G$ follows from \cite[Theorem~2]{tomiyama}.
Since $\iota_{G\times G}$ is injective and $A(G\times G)$ is a regular Banach algebra,  we conclude that $\ahg$ is regular, too. Note that, since the elements $\lambda_s\otimes \lambda_t$, $s,t\in G$, are
characters of $\ahg$, the latter algebra is also semi-simple.

Note that the space $\cl X = A(G) \cap C_c(G)$ is dense in $A(G)$; it follows that
the space $\cl X \odot \cl X$ is dense in $\ahg$. This implies that
$C_c(G \times G) \cap \ahg$ is dense in $\ahg$, that is, $\ahg$ is Tauberian.

(iii)
Suppose that $G$ is  amenable. By Leptin's Theorem,
$A(G)$ has a bounded approximate identity say $(\phi_\alpha)_\alpha$.
Set $w_{\alpha} = \phi_{\alpha}\otimes \phi_{\alpha}$.
If $\psi_1,\psi_2\in A(G)$ then, clearly,
$w_{\alpha}(\psi_1\otimes \psi_2)\to_{\alpha} \psi_1\otimes \psi_2$ in $\ahg$.
Now a straightforward approximation argument
shows that $(w_{\alpha})_{\alpha}$ is a (bounded) approximate identity for $\ahg$.

Conversely, suppose that $(w_\alpha)_{\alpha}$ is a bounded approximate identity of $\ahg$.
Let $\delta_s$ denote the character on $A(G)$ corresponding to an element $s\in G$.
The map $\id \otimes \delta_s : \ahg\to A(G)$ is a (completely) contractive homomorphism.
For an arbitrary  $0 \neq v \in A(G)$, let $s\in G$ so that $v(s) \neq 0$.
Note that
\begin{eqnarray*}
& & (\id \otimes \delta_s)(w_\alpha) v = (\id\otimes \delta_s)(w_\alpha) (\id\otimes \delta_s)(v \otimes v(s)^{-1} v)\\
& = & (\id \otimes \delta_s) (w_\alpha (v\otimes v(s)^{-1} v))
\rightarrow\mbox{}_{\alpha} (\id \otimes \delta_s) (v \otimes v(s)^{-1} v) = v.
\end{eqnarray*}
Thus, $A(G)$ has a (left) bounded approximate unit. By \cite[Theorem~1]{w},
$A(G)$ has a bounded approximate identity.
By Leptin's Theorem, $G$ is amenable.
\end{proof}

The following lemma will be needed shortly,
but it may be interesting in its own right.

\begin{lemma}\label{l:homomorphism-extension}
Let $\cA$  be a commutative Banach algebra,
$\cB$ be a completely contractive commutative Banach algebra, and
$\theta :\cA \rightarrow M^{\cb} \cB$ be a bounded homomorphism.   If $\cA$ has a bounded approximate identity and the linear span of
$\{ \theta(a)b : \ a\in \cA,\ b\in \cB\}$ is dense in $\cB$,  then $\theta$ can be extended to a  bounded map $\theta : M \cA \rightarrow M^{\cb} \cB$.
In particular, if $\cA$ is a completely contractive Banach algebra with  a bounded approximate identity, then $M\cA = M^{\cb} \cA$.
\end{lemma}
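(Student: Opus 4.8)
The plan is to build the extension $\theta : M\cA \to M^{\cb}\cB$ by declaring, for $a \in M\cA$ and $b \in \cB$, that $\theta(a)$ should act on $\cB$ by $\theta(a)(\theta(a_0)b_0) = \theta(a a_0)b_0$ for $a_0 \in \cA$, $b_0 \in \cB$, and then extending by linearity and density to all of $\cB$. The first task is to check this is well-defined: if $\sum_k \theta(a_0^k)b_0^k = 0$ in $\cB$, we must show $\sum_k \theta(a a_0^k)b_0^k = 0$. Here I would use that $\cA$ has a bounded approximate identity $(e_\lambda)$ together with the fact that $a \in M\cA$ means $a a_0^k \in \cA$ and, crucially, $a(e_\lambda a_0^k) = e_\lambda(a a_0^k) \to a a_0^k$ in $\cA$ (the defining property of a multiplier, plus $e_\lambda a_0^k \to a_0^k$); passing $\theta$ through this and using that $\theta$ is a homomorphism on $\cA$ turns $\sum_k \theta(a a_0^k)b_0^k$ into $\lim_\lambda \theta(a)\bigl(\sum_k \theta(e_\lambda a_0^k)b_0^k\bigr) = \lim_\lambda \theta(a)\theta(e_\lambda)\bigl(\sum_k \theta(a_0^k)b_0^k\bigr) = 0$, where at the last step I use that $\theta(e_\lambda)$ is the multiplier on $\cB$ coming from $\theta$ applied to $e_\lambda \in \cA$, so it is linear and continuous on $\cB$ and kills $0$. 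So well-definedness is essentially bootstrapped from the approximate identity.

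Next I would establish that each $\theta(a)$, $a \in M\cA$, is a completely bounded multiplier of $\cB$ with a uniform bound. Boundedness on the dense span: writing $v = \sum_k \theta(a_0^k)b_0^k$, one has $\theta(a)v = \lim_\lambda \theta(a e_\lambda)v' $-type manipulations, but more directly I would aim for the estimate $\|\theta(a)v\|_{\cB} \le \|\theta\| \, \|a\|_{M\cA} \, \|v\|_{\cB}$ by approximating: $\theta(a)v = \lim_\lambda \theta(a)\theta(e_\lambda)v = \lim_\lambda \theta(a e_\lambda) v$, and each $a e_\lambda \in \cA$ with $\|a e_\lambda\|_{\cA} \le \|a\|_{M\cA}\,\sup_\lambda\|e_\lambda\|$, so $\|\theta(a e_\lambda) v\|_{\cB} \le \|\theta\|\,\|a e_\lambda\|_{\cA}\,\|v\|_{\cB}$ (using $\theta(\cA) \subseteq M^{\cb}\cB$ contractively into $\CB(\cB)$ up to $\|\theta\|$). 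The same argument run at the matrix level, using that $M^{\cb}\cB \subseteq \CB(\cB)$ completely isometrically and that $\cB$ is completely contractive, gives complete boundedness of $\theta(a)$ with $\|\theta(a)\|_{\cb} \le \|\theta\|\,\sup_\lambda\|e_\lambda\|\,\|a\|_{M\cA}$. That $\theta(a)$ is genuinely a multiplier — i.e. $\theta(a)(b_1 b_2) = (\theta(a)b_1)b_2$ — follows from the characterisation $T(x)y = xT(y)$ recalled in the preliminaries, checked first on the dense span (where it reduces to commutativity of $\cB$ and the homomorphism property of $\theta$ on $\cA$) and then by continuity.

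It remains to verify that the extended map $\theta : M\cA \to M^{\cb}\cB$ is a bounded homomorphism extending the original: linearity and the bound are immediate from the construction, multiplicativity $\theta(a a') = \theta(a)\theta(a')$ is checked by evaluating both sides on the dense span and using that $\theta$ is multiplicative on $\cA$, and agreement with the original $\theta$ on $\cA \subseteq M\cA$ is automatic since for $a \in \cA$ the prescription gives $\theta(a)(\theta(a_0)b_0) = \theta(a a_0)b_0 = \theta(a)(\theta(a_0)b_0)$ using that $\theta|_\cA$ was already a homomorphism into $M^{\cb}\cB$. Finally, the ``in particular'' claim follows by taking $\cB = \cA$ and $\theta = \operatorname{id} : \cA \to M^{\cb}\cA$ (which lands in $M^{\cb}\cA$ since, as noted in the preliminaries, $a \mapsto \bm_a$ is a contraction $\cA \to M^{\cb}\cA$); the density hypothesis holds because $\cA$ itself has a bounded approximate identity, so $\cA^2$ is dense in $\cA$, and the extension yields a bounded map $M\cA \to M^{\cb}\cA$ which is inverse to the inclusion $M^{\cb}\cA \hookrightarrow M\cA$, forcing equality. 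The main obstacle I anticipate is the well-definedness step: one has to be genuinely careful that the limiting arguments through the approximate identity are legitimate, in particular that $\theta(e_\lambda)$ acts continuously on $\cB$ and that the convergence $a(e_\lambda a_0) \to a a_0$ takes place in the $\cA$-norm (not merely pointwise), so that $\theta$ can be applied continuously; everything else is a routine transfer of the standard multiplier-extension argument to the completely bounded, operator-space setting.
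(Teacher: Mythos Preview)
Your proposal is correct and follows essentially the same route as the paper: define $\theta(c)$ on the dense span $\cB_0$ by $\theta(c)\bigl(\sum_k \theta(a_k)b_k\bigr)=\sum_k\theta(ca_k)b_k$, prove well-definedness and complete boundedness by inserting the bounded approximate identity and rewriting everything in terms of $\theta(ce_\lambda)$ with $ce_\lambda\in\cA$, then extend by density and check the multiplier identity; the ``in particular'' clause is handled identically by taking $\cB=\cA$ and $\theta=\id$. One cosmetic point: in your well-definedness paragraph you write expressions like $\theta(a)\theta(e_\lambda)$ with $a\in M\cA$ before $\theta(a)$ has been defined, which looks circular --- the paper (and your own intent, clearly) avoids this by writing $\theta(ae_\lambda)$ directly, since $ae_\lambda\in\cA$; you should do the same when you write it up.
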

\begin{proof}
Fix a bounded approximate identity $(a_\alpha)_\alpha$  of $\cA$.
Let
$$\cB_0 = {\rm span}\{ \theta(a)(b): \ a\in \cA,\ b\in \cB\}.$$
For a given $c \in M \cA$, define $\theta(c)$ on $\cB_0$ by
 \[ \theta(c) \left(\sum_{k=1}^m \theta(a_k)(b_k)\right) := \sum_{k=1}^m \theta(ca_k)( b_k), \ \ \ a_k\in \cl A, b_k\in \cl B, k = 1,\dots,m.
 \]
The mapping $\theta(c)$ is  a well-defined linear map on $\cB_0$.
In fact, if
\[
\sum_{k=1}^n\theta(a^{(1)}_k)b^{(1)}_k = \sum_{l=1}^m\theta(a^{(2)}_k)b^{(2)}_k,
\]
 for some subsets
 $\{a^{(1)}_k, a^{(2)}_l: k=1, \ldots,n, \; l=1, \ldots, m\} \subseteq \cA$  and
 $\{b^{(1)}_k,b^{(2)}_l: k=1, \ldots,n, \; l=1, \ldots, m\} \subseteq \cB$,
then
\begin{eqnarray}\label{eq:well-definness-theta}
\sum_{k=1}^n \theta(ca_k^{(1)})b^{(1)}_k  &=& \lim_\alpha \sum_{k=1}^n \theta(ca_\alpha  a_k^{(1)})b^{(1)}_k \nonumber \\
&=& \lim_\alpha  \theta(ca_\alpha)\left( \sum_{k=1}^n \theta( a_k^{(1)})b^{(1)}_k\right) \\
&=& \lim_\alpha \theta(ca_\alpha)  \left(\sum_{l=1}^m \theta( a_l^{(2)})b^{(2)}_l \right) \nonumber\\
&=& \lim_\alpha   \sum_{l=1}^m \theta(ca_\alpha a_l^{(2)})b^{(2)}_l   =  \sum_{l=1}^m \theta(ca_l^{(2)})b^{(2)}_l.\nonumber
\end{eqnarray}

We claim that $\theta(c)$ is a completely bounded map on $\cB_0$.
Let
\[
\left[ \sum_{k=1}^{n_{i,j} } \theta(a_k^{(i,j)})b^{(i,j)}_k\right]_{i,j}
\]
 be an arbitrary element in the unit ball of $\Bbb{M}_n(\cB_0)$.  Then
\begin{eqnarray*}
& & \left\| \theta(c)^{(n)} \left[ \sum_{k=1}^{n_{i,j} } \theta(a_k^{(i,j)})b^{(i,j)}_k\right]_{i,j}\right\| =
\left\|  \left[ \sum_{k=1}^{n_{i,j} } \theta(c a_k^{(i,j)})b^{(i,j)}_k\right]_{i,j}\right\|\\
& = & \lim_\alpha \left\|  \left[ \sum_{k=1}^{n_{i,j} } \theta(c a_\alpha a_k^{(i,j)})b^{(i,j)}_k\right]_{i,j}\right\|\\
& = & \lim_\alpha \left\|  \theta(c a_\alpha)^{(n)} \left[ \sum_{k=1}^{n_{i,j} } \theta(a_k^{(i,j)})b^{(i,j)}_k\right]_{i,j}\right\| \\
&\leq &   \sup_\alpha  \| \theta(ca_\alpha)\|_{\rm cbm}
\leq  \| \theta\| \|c\|_{M \cA} \sup_\alpha \|a_\alpha\| < \infty.
\end{eqnarray*}
Since $\cl B_0$ is dense in $\cl B$, the map
$\theta(c)$ can be extended as a completely bounded map (denoted in the same way) on $\cB$.
Furthermore, $\theta(c)$ is a multiplier.
In fact, let $b, b' \in \cB$.
Since $\cB_0$ is dense in $\cB$, there is a sequence
$\left( \sum_{k=1}^{n_i} \theta(a_k^{(i)})b_k^{(i)}\right)_{i \in \Bbb{N}}$ in $\cB_0$
converging to $b$.
We have
\begin{eqnarray*}
\theta(c)(bb') &=& \lim_{i\to\infty}  \theta(c)\left( \sum_{k=1}^{n_i} \theta(a_k^{(i)}) b_k^{(i)} b'\right)\\
&=& \lim_{i\to\infty}  \sum_{k=1}^{n_i} \theta(ca_k^{(i)}) b_k^{(i)} b' \\
&=& \lim_{i\to\infty} \left( \sum_{k=1}^{n_i} \theta(ca_k^{(i)}) b_k^{(i)}\right) b'\\
&=& \lim_{i\to\infty} \theta(c)\left( \sum_{k=1}^{n_i} \theta(a_k^{(i)}) b_k^{(i)}\right) b' = \theta(c)(b) b';
\end{eqnarray*}
thus, $\theta$ takes values in $M^{\cb}\cB$.

To prove the last statement in the formulation of the Lemma,
note that if $\cA$ is a completely contractive Banach algebra, $\cA$ sits inside $M^{\cb} \cA$
in a natural fashion.
Since $\cl A$ possesses a  (bounded) approximate identity, the set
$\{ab: a,b \in \cA\}$ is dense in $\cA$.
By the first part of the proof,
the identity map can be extended to a map $\theta: M\cA \rightarrow M^{\cb} \cA$ where for each  $b \in M \cA$,
 \[
 \theta(b)(a) = \lim_\alpha (ba_\alpha)a = ba.
 \]
Therefore, the extension $\theta$ is still the identity map, and hence $M\cA \subseteq M^{\cb} \cA$.
This completes the proof as the inclusion $M^{\cb} \cA \subseteq M \cA$ holds by definition.
\end{proof}

 \begin{remark}
{\rm Lemma~\ref{l:homomorphism-extension} was formulated in the generality that is needed later, that is,
for the case $\cA$ is commutative and $\cB$ is  commutative and completely contractive.
However, it holds more generally when
 $\cA$ is an arbitrary Banach algebra and $\cB$ is a completely bounded Banach algebra.
We refer the reader to \cite{daws} for more details on completely bounded multipliers of completely bounded Banach algebras.}
 \end{remark}

We note that
\begin{equation}\label{eq_ahginm}
\ahg \subseteq M^{\cb}\ahg \subseteq M\ahg,
\end{equation}
where the first inclusion follows from the fact that $\ahg$ is a
completely contractive Banach algebra (see Proposition \ref{p_ccehag} (i)).
The following corollary is immediate from Lemma~\ref{l:homomorphism-extension} and Proposition~\ref{p_ccehag}.

\begin{corollary}\label{c:AM-McbAh=MAh}
If $G$ is an amenable locally compact group then $M^{\cb}\ahg$ $=$ $M\ahg$.
\end{corollary}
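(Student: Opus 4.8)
The plan is to derive Corollary~\ref{c:AM-McbAh=MAh} as a direct application of the final assertion of Lemma~\ref{l:homomorphism-extension}, namely that a completely contractive Banach algebra with a bounded approximate identity satisfies $M\cl A = M^{\cb}\cl A$. So the only thing to verify is that, for $G$ amenable, the algebra $\cl A = \ahg$ meets the two hypotheses of that statement: it is completely contractive, and it has a bounded approximate identity.

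First I would invoke Proposition~\ref{p_ccehag}(i), which states that $\ahg = A(G)\otimes_{\hh}A(G)$, under pointwise multiplication, is a (regular, semisimple, Tauberian) \emph{completely contractive} Banach algebra; this gives the first hypothesis and also justifies the inclusions in~(\ref{eq_ahginm}). Next I would invoke Proposition~\ref{p_ccehag}(iii): since $G$ is amenable, $\ahg$ has a bounded approximate identity (indeed one may take $w_\alpha = \phi_\alpha\otimes\phi_\alpha$ for a bounded approximate identity $(\phi_\alpha)_\alpha$ of $A(G)$ coming from Leptin's Theorem). With both hypotheses in hand, the last sentence of Lemma~\ref{l:homomorphism-extension} applies verbatim with $\cl A = \ahg$ and yields $M\ahg = M^{\cb}\ahg$.

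There is essentially no obstacle here, since the corollary is assembled purely from earlier results; the proof is two invocations plus a one-line conclusion. If anything needs a word of care, it is only to note explicitly that we are using the \emph{final} assertion of the lemma (the case $\cl B = \cl A$), so that the density hypothesis on $\operatorname{span}\{\theta(a)b\}$ is automatically met because a completely contractive Banach algebra with a bounded approximate identity has $\{ab : a,b\in\cl A\}$ dense in $\cl A$ — a point already handled inside the proof of the lemma. Hence no new argument is required, and the corollary follows immediately.
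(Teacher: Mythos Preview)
Your proof is correct and follows exactly the paper's approach: the paper states that the corollary is immediate from Lemma~\ref{l:homomorphism-extension} and Proposition~\ref{p_ccehag}, which is precisely the combination you invoke.
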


\begin{proposition}\label{p_algtenin}
The following hold:

(i) \  $M^{\rm cb}A(G) \odot M^{\rm cb}A(G) \subseteq M^{\rm cb}A_{\hh}(G)$;

(ii) $\ahg \subseteq \overline{M^{\rm cb}A(G) \odot M^{\rm cb}A(G)}^{\|\cdot\|_{\rm cbm}}$.

\noindent Moreover, if $f,g\in M^{\rm cb}A(G)$, then
$\|f \otimes g\|_{\rm cbm}\leq \|f\|_{\rm cbm}\|g\|_{\rm cbm}.$
\end{proposition}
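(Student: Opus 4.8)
The plan is to prove the three assertions in the order (iii) the multiplicative bound on elementary tensors, then (i), then (ii), since the norm estimate is the engine that makes (i) and (ii) essentially free.

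\medskip

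\noindent\textbf{Step 1: the tensor norm bound.}
Let $f,g\in M^{\cb}A(G)$. The point is that $\frak{m}_{f\otimes g}$ should coincide with $\frak{m}_f\otimes_{\hh}\frak{m}_g$, the Haagerup tensor product of the two maps $\frak{m}_f,\frak{m}_g: A(G)\to A(G)$. First I would check on the algebraic tensor product $A(G)\odot A(G)$ that $(f\otimes g)\cdot(\phi\otimes\psi) = (f\phi)\otimes(g\psi) = (\frak{m}_f\otimes\frak{m}_g)(\phi\otimes\psi)$, so the two maps agree on a dense subspace; since $f\otimes g$ is a priori only known to be a function on $G\times G$, what this really shows is that multiplication by the function $f\otimes g$ is given on $A(G)\odot A(G)$ by the restriction of the completely bounded map $\frak{m}_f\otimes_{\hh}\frak{m}_g$ on $\ahg$. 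By the functoriality of the Haagerup tensor product for completely bounded maps, $\frak{m}_f\otimes_{\hh}\frak{m}_g$ is completely bounded with $\|\frak{m}_f\otimes_{\hh}\frak{m}_g\|_{\cb}\le\|\frak{m}_f\|_{\cb}\|\frak{m}_g\|_{\cb}=\|f\|_{\cbm}\|g\|_{\cbm}$. Hence $f\otimes g$ is a completely bounded multiplier of $\ahg$ with $\|f\otimes g\|_{\cbm}\le\|f\|_{\cbm}\|g\|_{\cbm}$, and in particular the algebraic span $M^{\cb}A(G)\odot M^{\cb}A(G)$ sits inside $M^{\cb}A_{\hh}(G)$, proving (i) and the moreover clause simultaneously.

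\medskip

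\noindent\textbf{Step 2: density, part (ii).}
For this I would use that $M^{\cb}A(G)$ contains $A(G)$ contractively (this is the remark in the Preliminaries that $a\mapsto\frak{m}_a$ from $\cl A$ into $M^{\cb}\cl A$ is a contraction, applied with $\cl A = A(G)$). Therefore $A(G)\odot A(G)\subseteq M^{\cb}A(G)\odot M^{\cb}A(G)$, and moreover the inclusion $A(G)\odot A(G)\hookrightarrow M^{\cb}A_{\hh}(G)$ is contractive for the relevant norms: for $v\in A(G)\odot A(G)$ one has $\|v\|_{\cbm}\le\|v\|_{\hh}$ (again by $\ahg\subseteq M^{\cb}\ahg$ being a contraction, Proposition~\ref{p_ccehag}(i) and (\ref{eq_ahginm})). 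Since $A(G)\odot A(G)$ is $\|\cdot\|_{\hh}$-dense in $\ahg$, every $v\in\ahg$ is a $\|\cdot\|_{\hh}$-limit, hence a $\|\cdot\|_{\cbm}$-limit, of elements of $A(G)\odot A(G)\subseteq M^{\cb}A(G)\odot M^{\cb}A(G)$. This gives (ii).

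\medskip

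\noindent\textbf{Main obstacle.}
The only genuinely nontrivial point is Step~1: identifying the multiplication operator $\frak{m}_{f\otimes g}$ on $\ahg$ with the Haagerup tensor product map $\frak{m}_f\otimes_{\hh}\frak{m}_g$, and in particular making sure the latter is a \emph{multiplier} of $\ahg$ and not merely a completely bounded endomorphism. The completely bounded norm estimate for $\frak{m}_f\otimes_{\hh}\frak{m}_g$ is the standard bifunctoriality of the Haagerup tensor product, so the care is in the bookkeeping: one must verify that $\frak{m}_f\otimes_{\hh}\frak{m}_g$ agrees with pointwise multiplication by the function $f\otimes g$ (using Lemma~\ref{l_iotah} to pass between the abstract tensor and its realisation as a function on $G\times G$), from which the multiplier property $\frak{m}_f\otimes_{\hh}\frak{m}_g(xy) = x\,(\frak{m}_f\otimes_{\hh}\frak{m}_g)(y)$ for $x,y\in\ahg$ follows by density from the elementary-tensor case. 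Once this identification is in place, everything else is a routine density argument.
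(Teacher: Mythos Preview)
Your proposal is correct and follows essentially the same approach as the paper. The only cosmetic difference is in Step~1: the paper factors $f\otimes g=(f\otimes 1)(1\otimes g)$, observes that $\frak{m}_f\otimes\id$ and $\id\otimes\frak{m}_g$ are completely bounded on $\ahg$ by functoriality of the Haagerup tensor product, and then invokes the Banach algebra structure of $M^{\cb}\ahg$ for the norm bound; you instead use the bifunctoriality $\frak{m}_f\otimes_{\hh}\frak{m}_g$ in one shot, which amounts to the same thing since $\frak{m}_f\otimes_{\hh}\frak{m}_g=(\frak{m}_f\otimes\id)\circ(\id\otimes\frak{m}_g)$. Your Step~2 is identical to the paper's argument for (ii).
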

\begin{proof}
Let $f,g\in M^{\rm cb}A(G)$. Then the map $\bm_f : A(G)\to A(G)$, given by $\bm_f(h) = fh$,
is completely bounded. Thus, $\bm_f\otimes \id : \ahg\to \ahg$ is completely bounded;
however, it is easy to note that $(\bm_f\otimes \id)(v) = (f\otimes 1)v$, $v\in \ahg$.
Thus, $f\otimes 1\in M^{\cb}\ahg$. By symmetry, $1\otimes g\in M^{\cb}\ahg$ and hence
$f\otimes g = (f\otimes 1)(1\otimes g)\in M^{\cb}\ahg$.
The norm inequality is straightforward from the fact that $M^{\cb}A_{\hh}(G)$ is a Banach algebra.

Since $\ahg$ is a completely contractive Banach algebra, if $v\in \ahg$ then $\|v\|_{\rm cbm} \leq \|v\|_{\hh}$.
Now the fact that $A(G)\subseteq M^{\cb}A(G)$ implies
$$\ahg = \overline{A(G)\odot A(G)}^{\|\cdot\|_{\hh}}\subseteq \overline{M^{\rm cb}A(G) \odot M^{\rm cb}A(G)}^{\|\cdot\|_{\rm cbm}}.$$
\end{proof}

Since $\|\phi\|_{\infty}\leq \|\phi\|_{B(G)}$ whenever $\phi \in B(G)$ (see \cite[Corollary~1.8]{ch}),
a straightforward argument shows that, if $w = \sum_{i=1}^{\infty}\phi_i\otimes\psi_i$ is an element 
of $B(G)\otimes_{\gamma} B(G)$ (where we have assumed that $\sum_{i=1}^{\infty} \|\phi_i\|^2_{B(G)} < \infty$
and $\sum_{i=1}^{\infty} \|\psi_i\|^2_{B(G)} < \infty$)
then the series $\sum_{i=1}^{\infty} \phi_i(s)\psi_i(t)$ converges for all $s,t\in G$;
thus, $w$ can be identified with a function on $G\times G$. 
Proposition \ref{p_algtenin} now implies that 
\[
B(G) \otimes_\gamma B(G) \subseteq \overline{M^{\rm cb}A(G) \odot M^{\rm cb}A(G)}^{\|\cdot\|_{\rm cbm}} \subseteq M^{\rm cb}\ahg.
\]
It is natural to ask whether $M^{\cb}\ahg$ can be obtained from the two copies of $M^{\cb}A(G)$
lying inside it. More specifically, we formulate the following question.

\begin{question}\label{q_bgh}
(i) \ Is it true that $B(G)\otimes_{\hh} B(G)\subseteq \cbmh$?

(ii) Is $M^{\rm cb}A(G) \odot M^{\rm cb}A(G)$ dense in $M^{\rm cb}\ahg$? 
\end{question}

Given $w\in \cbmh$,   let $R_w$ be the dual of $\bm_w$; clearly, $R_w$ is a completely
bounded weak* continuous map on $\ev(G)$ and $\|R_w\|_{\cb} = \|w\|_{\rm cbm}$.

The proof of the following proposition is  similar to the proof of   \cite[Theorem~1.6]{ch}
which characterises the completely bounded multipliers of Fourier algebras of locally compact groups.
We note that, if $H$ is a finite group, then $A(H)$ coincides, as a set, with the algebra of all complex valued
functions on $H$, and hence the operator projective tensor product $A_{\rm h}(G)\hat{\otimes} A(H)$ can be identified
in a natural fashion with a space of functions on $G\times G\times H$.

\begin{proposition}\label{p_McbAh(G)-finite-groups}
Let $u$ be a bounded continuous function on $G \times G$.
The following are equivalent:

(i) \ $u \in M^{\rm cb}\ahg$;

(ii)  there exists $C>0$ such that for every finite group $H$,
$u \otimes 1$  belongs to $M( \ahg \hat\otimes A(H))$ and $\|u \otimes 1\|_{M( \ahg \hat\otimes A(H))} \leq C$.
\end{proposition}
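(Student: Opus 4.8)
The plan is to mimic the proof of \cite[Theorem~1.6]{ch}, which characterises completely bounded Herz--Schur multipliers via boundedness (not complete boundedness) of an amplification by a finite group. The key conceptual point is that complete boundedness of $\bm_u$ on $\ahg$ can be detected one matrix level at a time, and each matrix level of size $n$ can be absorbed into an ordinary (bounded) multiplier after tensoring with $A(H)$ for a suitable finite group $H$ with $|H|\geq n$; conversely, uniform boundedness over all finite groups forces the complete bound.

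\medskip

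\noindent\emph{(i) $\Rightarrow$ (ii).} Suppose $u\in M^{\rm cb}\ahg$, so $\bm_u\in CB(\ahg)$ with $\|u\|_{\rm cbm}=\|\bm_u\|_{\cb}$. For a finite group $H$, the algebra $A(H)$ is finite-dimensional and completely contractively isomorphic to a direct sum of matrix algebras $\mathbb M_{d_\pi}$ over $\widehat H$; in particular $A(H)$ is a completely contractive Banach algebra with $M^{\rm cb}A(H)=A(H)$ (a finite group is amenable, so Corollary~\ref{c:AM-McbAh=MAh} applies, or one argues directly). Since $\ahg\hat\otimes A(H)$ is the operator projective tensor product of two completely contractive Banach algebras, it is again a completely contractive Banach algebra, and $\bm_u\otimes\id_{A(H)}$ is completely bounded on it with $\|\bm_u\otimes\id\|_{\cb}\leq\|\bm_u\|_{\cb}$ (tensoring a completely bounded map with the identity does not increase the cb-norm). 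The linearised map $\bm_u\otimes\id$ is precisely multiplication by the function $u\otimes 1$ on $G\times G\times H$, exactly as in the proof of Proposition~\ref{p_algtenin}(i). Hence $u\otimes 1\in M(\ahg\hat\otimes A(H))$ with norm at most $C:=\|u\|_{\rm cbm}$, uniformly in $H$. (If one wants to be careful that $A(H)$ contributes a constant depending only on the operator space structure and not on $|H|$, one uses that $A(H)\hookrightarrow M^{\rm cb}A(H)$ is a complete contraction, valid for every locally compact group by the remark after Proposition~\ref{p_algtenin}.)

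\medskip

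\noindent\emph{(ii) $\Rightarrow$ (i).} This is the substantive direction. Fix $n\in\mathbb N$ and let $U=[u_{k,l}]\in\mathbb M_n(\ahg)$ be in the unit ball; we must bound $\|[u\,u_{k,l}]\|_{\mathbb M_n(\ahg)}$ by $C$. The idea is to encode the matrix $U$ inside a single element of $\ahg\hat\otimes A(H)$ for a finite group $H$ of order at least $n$ (e.g.\ a symmetric group, or more economically using the matrix coefficients of the regular representation). Concretely, pick $H$ with a unitary representation whose matrix coefficients $\{h_{k,l}\}$ span an $n^2$-dimensional subspace realising the matrix units; then $v:=\sum_{k,l}u_{k,l}\otimes h_{k,l}\in\ahg\hat\otimes A(H)$ has $\|v\|\asymp\|U\|_{\mathbb M_n(\ahg)}$, up to a constant independent of $n$ coming from the operator space structure of $A(H)$ on that subspace (this is where one invokes that the completely contractive inclusion $\mathbb M_n\hookrightarrow A(H)$ of matrix coefficients is also completely bounded below with a universal constant — the standard trick in \cite{ch}). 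Applying multiplication by $u\otimes 1$ gives $(u\otimes 1)v=\sum_{k,l}(u\,u_{k,l})\otimes h_{k,l}$, whose norm is at least a universal constant times $\|[u\,u_{k,l}]\|_{\mathbb M_n(\ahg)}$, and is at most $C\|v\|$ by hypothesis. Combining the two comparisons yields $\|\bm_u^{(n)}\|\leq C$ for all $n$, hence $u\in M^{\rm cb}\ahg$. One should also check at the outset that a continuous bounded $u$ with $u\otimes 1\in M(\ahg\hat\otimes A(\{e\}))=M\ahg$ is in particular an ordinary multiplier of $\ahg$, so that $\bm_u$ makes sense before discussing its complete boundedness.

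\medskip

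\noindent\emph{Main obstacle.} The delicate point is direction (ii) $\Rightarrow$ (i): one needs the norm comparisons $\|v\|_{\ahg\hat\otimes A(H)}$ versus $\|U\|_{\mathbb M_n(\ahg)}$ to be two-sided with constants \emph{independent of $n$} and of the choice of $H$. This is precisely the content of the finite-group averaging argument in \cite[proof of Theorem~1.6]{ch}, and transcribing it requires knowing how the operator space structure of $A(H)$ interacts with $\mathbb M_n$ — specifically that $A(H)\hat\otimes\ahg$ sees the matrix amplification $\mathbb M_n(\ahg)$ completely isometrically on the relevant subspace. I would handle this by recalling that $A(H)\cong\bigoplus_{\pi\in\widehat H}\mathbb M_{d_\pi}$ (with its natural operator space structure, since $A(H)^*=\vn(H)$), choosing $\pi$ with $d_\pi\geq n$ when such exists (e.g.\ passing to $H\times H\times\cdots$ or to $S_m$ for large $m$, where irreducibles of arbitrarily large dimension occur), and then using that $\mathbb M_{d_\pi}\subseteq A(H)$ completely isometrically to get $\ahg\hat\otimes\mathbb M_{d_\pi}=\mathbb M_{d_\pi}(\ahg)$, which contains $\mathbb M_n(\ahg)$ as a completely isometric corner. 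With that in hand both comparisons are genuine complete isometries onto corners and no $n$-dependent constants appear. The rest is the routine bookkeeping already rehearsed in the proof of Proposition~\ref{p_algtenin}.
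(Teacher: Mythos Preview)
Your direction (i)$\Rightarrow$(ii) is fine. The gap is in (ii)$\Rightarrow$(i), and it lies exactly at the point you flag as the ``main obstacle''. You assert that $A(H)\cong\bigoplus_{\pi\in\widehat H}\M_{d_\pi}$ as operator spaces and then that $\ahg\hat\otimes\M_{d_\pi}=\M_{d_\pi}(\ahg)$. Both identifications are wrong. Since $A(H)=\vn(H)_*$ and $\vn(H)\cong\bigoplus_\pi\M_{d_\pi}$, the predual decomposes as $A(H)\cong\bigoplus_\pi T_{d_\pi}$ (trace class, with the predual operator space structure), not as a sum of matrix algebras. Hence the summand of $\ahg\hat\otimes A(H)$ attached to $\pi$ is $\ahg\hat\otimes T_{d_\pi}$, and for $n\geq 2$ the Banach spaces $\cl X\hat\otimes T_n$ and $\M_n(\cl X)$ are genuinely different (their duals are $\M_n(\cl X^*)$ and $T_n(\cl X^*)$, respectively). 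So the two-sided norm comparison $\|v\|_{\ahg\hat\otimes A(H)}\asymp\|U\|_{\M_n(\ahg)}$ that your encoding needs does not hold, and the argument breaks down.

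The paper's proof sidesteps this by working on the dual side, where the identification \emph{is} with matrix amplifications: one has, completely isometrically,
\[
(\ahg\hat\otimes A(H))^*\;\cong\;CB(\ahg,\vn(H))\;\cong\;\bigoplus_{i}\M_{k_i}(\ev(G)).
\]
The adjoint $R_{u\otimes 1}=\bm_{u\otimes 1}^*$ then restricts on the $i_0$-th summand to the amplification $R_u\otimes\id_{k_{i_0}}$ on $\M_{k_{i_0}}(\ev(G))$, so $\|R_u\otimes\id_{k_{i_0}}\|\leq\|u\otimes 1\|_{M(\ahg\hat\otimes A(H))}\leq C$. Choosing, for each $l\in\bb N$, a finite group $H$ possessing an irreducible representation of dimension $l$ gives $\|R_u\otimes\id_l\|\leq C$ for all $l$, whence $R_u$ (equivalently $\bm_u$) is completely bounded. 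In short: your instinct to amplify by $A(H)$ is correct, but the matrix levels appear on the dual $\ev(G)$, not on $\ahg$ itself.
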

\begin{proof}
Suppose that $H$ is a finite group and
let $k_1,\ldots, k_n\in \bb{N}$ be the dimensions of the (pairwise inequivalent) irreducible representations of $H$.
Then $\vn(H) \cong \bigoplus_{i=1}^n  {\M}_{k_i}$.
Up to complete isometries, by  Corollary~7.1.5 and equation (7.1.16)  in \cite{er_book}, we have 
\begin{eqnarray}\label{eq_ahfin}
(\ahg \hat{\otimes} A(H))^* &=& CB(\ahg , A(H)^*) = CB(\ahg , \bigoplus_{i=1}^n  {\M}_{k_i}) \nonumber \\
& = & \bigoplus_{i=1}^n  CB(\ahg, {\M}_{k_i}) = \bigoplus_{i=1}^n   {\M}_{k_i}({\ahg}^*) \\
&=&  \bigoplus_{i=1}^n   {\M}_{k_i}(\ev(G))\nonumber.
\end{eqnarray}

(ii)$\Rightarrow$(i)
Suppose that $u$ is a bounded continuous function satisfying the condition in (ii).
For a fixed positive integer $l$, choose $H$ so that for some $i_0$, $k_{i_0} = l$.
By restricting $R_{u\otimes 1}$ to the $i_0$th component of
$(\ahg  \hat\otimes A(H))^*$ in the decomposition (\ref{eq_ahfin}),
we get
\[
\| R_u \otimes {\id}_{k_{i_0}}\| \leq \| \bm^*_{u\otimes 1}\|_{\mathcal{B}(\ev(G)  {\otimes} \vn(H))} = \| u\otimes 1\|_{M( \ahg \hat\otimes A(H))}\leq C.
\]
It follows that $R_u$ is a completely bounded weakly$^*$ continuous map on $\ev(G)$;
consequently, $u \in M^{\rm cb}\ahg$.

(i)$\Rightarrow$(ii) follows from the identification (\ref{eq_ahfin}).
\end{proof}

In the sequel, for $w\in \mh$ and $u\in \ev(G)$,
we often write $w\cdot u = R_w(u)$.
It is clear that
\begin{equation}\label{eq_iw}
\|w\cdot u\|_{\eh} \leq \|w\|_{\rm cbm}\|u\|_{\eh}.
\end{equation}
Note that if $w\in \cbmh$ then
\begin{equation}\label{eq_onel}
w\cdot (\lambda_s\otimes\lambda_t) = w(s,t)(\lambda_s\otimes \lambda_t), \ \ \ s,t\in G.
\end{equation}
Indeed, if $v\in \ahg$ then, by Lemma \ref{l_iotah},
$$\langle w\cdot (\lambda_s\otimes\lambda_t),v\rangle
= \langle \lambda_s\otimes\lambda_t,wv\rangle
= (wv)(s,t) = \langle w(s,t) (\lambda_s\otimes \lambda_t), v\rangle,$$
and (\ref{eq_onel}) is proved. Since
$\|\lambda_s\otimes\lambda_t\|_{\eh} = 1$,
we have that
\begin{equation}\label{eq_unif}
|w(s,t)|\leq \|w\|_{\rm cbm}, \ \ \ s,t\in G.
\end{equation}

In the next proposition, we equip $M^{\rm cb}\ahg$ with the operator space
structure arising from its inclusion into $CB(\ahg)$.

\begin{proposition}\label{p_cow} 
The map
\[
M^{\rm cb}\ahg \times \ev(G) \to \ev(G)
\]
given by $(w,u)\to w\cdot u$ turns $\ev(G)$ into a completely contractive
operator  $M^{\rm cb}\ahg$-module.
Moreover, the module action is weak* continuous with respect to the second variable.
\end{proposition}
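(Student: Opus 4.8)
The plan is to verify the module axioms for the pairing $(w,u)\mapsto w\cdot u = R_w(u)$ directly, leaning on the identities already established in the excerpt. First I would record the \emph{associativity/module} identity: for $w_1,w_2\in M^{\rm cb}\ahg$ and $u\in\ev(G)$ one has $w_1\cdot(w_2\cdot u) = (w_1w_2)\cdot u$. Since $R_w$ is the dual of $\bm_w$ and $\bm_{w_1w_2} = \bm_{w_1}\bm_{w_2}$ on $\ahg$ (as $M^{\rm cb}\ahg$ is a commutative algebra of multipliers acting by pointwise multiplication), dualising gives $R_{w_1w_2} = R_{w_2}R_{w_1} = R_{w_1}R_{w_2}$, which is exactly the module identity. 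Likewise the constant function $1$ is the identity of $M^{\rm cb}\ahg$ and $R_1 = \id$, so the action is unital. This handles the purely algebraic part.

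Next I would address complete contractivity. The point is that the bilinear map $(w,u)\mapsto w\cdot u$ should extend to a complete contraction $M^{\rm cb}\ahg\hat\otimes\ev(G)\to\ev(G)$, equivalently (by the universal property of $\hat\otimes$ and the definition of the operator space structure on $M^{\rm cb}\ahg$ as a subspace of $CB(\ahg)$) that the linearised map is jointly completely contractive. Here I would use that $w\mapsto R_w$ is, by construction, a complete isometry from $M^{\rm cb}\ahg$ (as a subspace of $CB(\ahg)$) onto a subspace of $CB(\ev(G))$: indeed $R_w = \bm_w^*$ and taking adjoints is a complete isometry $CB(\ahg)\to CB(\ev(G))$, so $\|R_w\|_{\cb} = \|\bm_w\|_{\cb} = \|w\|_{\rm cbm}$, and the same at every matrix level. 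Combining this with the fact that the evaluation map $CB(\ev(G))\hat\otimes\ev(G)\to\ev(G)$, $(\Phi,u)\mapsto\Phi(u)$, is completely contractive (a standard operator space fact, since $CB(\cl X,\cl Y) = (\cl X\hat\otimes\cl Y^*)^*$ and the evaluation is the canonical pairing), we get that $(w,u)\mapsto w\cdot u$ is completely contractive. I would spell out the matrix-norm estimate $\|[w_{i,j}\cdot u_{k,l}]\|_{\M_{nm}(\ev(G))}\le\|[w_{i,j}]\|_{\M_n(M^{\rm cb}\ahg)}\|[u_{k,l}]\|_{\M_m(\ev(G))}$ using these two ingredients; alternatively one can observe directly that $[w_{i,j}\cdot u_{k,l}] = (R_{[w_{i,j}]})^{(m)}[u_{k,l}]$ with $\|R_{[w_{i,j}]}\|_{\cb} = \|[w_{i,j}]\|_{\M_n(M^{\rm cb}\ahg)}$.

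For the weak* continuity in the second variable: each $R_w$ is, as noted at the start of the paragraph preceding Proposition \ref{p_cow}, a weak* continuous map on $\ev(G)$ — being the adjoint of the bounded operator $\bm_w$ on $\ahg$, whose own dual space $\ev(G)$ carries the weak* topology by \eqref{eq_dah}. So for fixed $w$, $u\mapsto w\cdot u = \bm_w^*(u)$ is weak*-weak* continuous by the general principle that a Banach space adjoint is weak* continuous. This is essentially immediate and requires no further work.

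The only step with any subtlety is the complete contractivity: one must be careful that the operator space structure placed on $M^{\rm cb}\ahg$ (the one induced from $CB(\ahg)$) is the one for which $w\mapsto R_w$ is a complete isometry, and that the composition with the (completely contractive) evaluation map is legitimate at the level of operator spaces. Once the identification $M^{\rm cb}\ahg\hookrightarrow CB(\ahg)\xrightarrow{*}CB(\ev(G))$ is set up as a complete isometry, the rest is the bookkeeping of inserting this into the completely contractive pairing $CB(\ev(G))\hat\otimes\ev(G)\to\ev(G)$, and I expect no real obstacle beyond keeping the matrix indices straight.
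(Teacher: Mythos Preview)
Your proposal is correct and, for the module identity and the weak* continuity in the second variable, essentially identical to the paper's argument (the paper also verifies $(w_1w_2)\cdot u = w_1\cdot(w_2\cdot u)$ by unwinding the pairing, and observes that $R_w = \bm_w^*$ is weak* continuous).

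For complete contractivity the two arguments differ only in presentation. The paper carries out the matrix estimate directly: it writes $\|[w_{p,q}\cdot u_{i,j}]\|$ as a supremum of pairings $\|\langle\langle [u_{i,j}],[w_{p,q}v_{s,t}]\rangle\rangle\|$ over $[v_{s,t}]\in\ball(\M_r(\ahg))$, and then bounds $\|[w_{p,q}v_{s,t}]\|$ by $\|[w_{p,q}]\|_{\M_k(M^{\rm cb}\ahg)}$ using the identification $\M_k(CB(\ahg)) = CB(\ahg,\M_k(\ahg))$. Your route packages the same inequality abstractly, via the complete isometry $w\mapsto R_w$ from $M^{\rm cb}\ahg\subseteq CB(\ahg)$ into $CB(\ev(G))$ (adjoint is a complete isometry) followed by the completely contractive evaluation $CB(\ev(G))\hat\otimes\ev(G)\to\ev(G)$. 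Your parenthetical justification for the latter is slightly off (the duality is $(\cl X\hat\otimes\cl Y)^* = CB(\cl X,\cl Y^*)$, not $CB(\cl X,\cl Y)=(\cl X\hat\otimes\cl Y^*)^*$), but the claim itself is the standard fact that $\|[\Phi_{ij}(u_{kl})]\|\le\|[\Phi_{ij}]\|_{\M_n(CB(\cl X))}\|[u_{kl}]\|_{\M_m(\cl X)}$, which is immediate from $\M_n(CB(\cl X)) = CB(\cl X,\M_n(\cl X))$. The two approaches are interchangeable; yours is more structural, the paper's more explicit.
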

\begin{proof}
The map $\theta : (w,u)\to w\cdot u$ is clearly bilinear. If $v\in \ahg$, $w_1,w_2\in M^{\rm cb}\ahg$,
and $u\in \ev(G)$ then
$$\langle (w_1w_2)\cdot u, v\rangle = \langle u, w_1w_2 v\rangle
= \langle u, w_2w_1 v\rangle = \langle w_2\cdot u, w_1 v\rangle = \langle w_1\cdot (w_2\cdot u), v\rangle,$$
and hence the map $(w,u)\to w\cdot u$ is a module action.

For $[\alpha_{p,q}] \in \M_n(\ev(G))$ and $[\beta_{l,m}] \in \M_r(\ahg)$, let
\[
\langle \langle [\alpha_{p,q}],  [\beta_{l,m}]\rangle\rangle = [\langle \alpha_{p,q},\beta_{l,m}\rangle]\in \M_{nr}.
\]
Suppose  that $[u_{i,j}] \in \M_n(\ev(G))$ and
$[w_{p,q}] \in \M_k(M^{\rm cb}A_{\rm h}(G))$.  Then
\begin{eqnarray*}
& & \|\theta^{(n,k)}( [w_{p,q}], [u_{i,j}] ) \| \\
& = & \sup\{\|\langle\langle  \theta^{(k,n)} ( [w_{p,q}], [u_{i,j}]) , [v_{s,t}]\rangle\rangle\| : [v_{s,t}]\in \ball(\M_r(\ahg), \ r\in \Bbb{N}\}\\
& = & \sup\{\|\langle\langle     [u_{i,j}]  , [w_{p,q} v_{s,t}]\rangle\rangle\| : [v_{s,t}]\in \ball(\M_r(\ahg), \ r\in \Bbb{N}\}\\
&\leq & \| [u_{i,j}]\|_{{\rm eh}} \sup\{\|  [w_{p,q} v_{s,t}]\|_{{\rm h}} : [v_{s,t}] \in \ball(\M_r(\ahg), \ r\in \Bbb{N}\}.
\end{eqnarray*}
For a fixed $r \in \Bbb{N}$,
let $T_{[w_{p,q}]} : \ahg\to \M_{k}(\ahg)$ be the operator given by
$T_{[w_{p,q}]}(v) = [w_{p,q}v]$, $v\in \ahg$.
By the definition of the operator space structure of $M^{\rm cb}\ahg$,
for each $[v_{s,t}]$   in the unit ball of $\M_r(\ahg)$ we have
\[
\|[ T_{[w_{p,q}]}(v_{s,t})]\|_{{\rm h}} \leq \| [w_{p,q}]  \|_{\M_k(M^{\rm cb}\ahg)}.
\]
This implies that
\[
\|\theta^{(k,n)} ( [w_{p,q}], [u_{i,j}]) \|_{\eh} \leq \| [w_{p,q}] \|_{\M_k(M^{\rm cb}\ahg)} \| [u_{i,j}]\|_{{\rm eh}}.
\]

It remains to show that the module action is weak* continuous with respect to the second variable.
To this end, let $(u_i)_i\subseteq \ev(G)$ be a net converging in the weak* topology to $u\in \ev(G)$.
If $w\in M^{\cb}\ahg$ and $v\in \ahg$ then
$$\langle w\cdot u_i, v\rangle = \langle  u_i, w v\rangle \to_i \langle  u, w v\rangle
= \langle w\cdot u, v\rangle,$$
establishing the claim.
\end{proof}

\begin{lemma}\label{l_elt}
Let $\psi_1,\psi_2\in A(G)$ and $w = \psi_1\otimes \psi_2$. If $T_1,T_2\in \vn(G)$ then
$w\cdot (T_1\otimes T_2) = (\psi_1\cdot T_1) \otimes (\psi_2\cdot T_2)$.
\end{lemma}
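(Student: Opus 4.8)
The plan is to verify the identity on a generating set and then extend by weak* density. The map $w \cdot (\cdot) = R_w(\cdot)$ on $\ev(G)$ is the dual of $\bm_w$, and in particular it is weak* continuous by Proposition~\ref{p_cow}; similarly, each $\psi_i \cdot (\cdot) = \frak{m}_{\psi_i}^*(\cdot)$ is weak* continuous on $\vn(G)$, so $(\psi_1 \cdot (\cdot)) \otimes (\psi_2 \cdot (\cdot))$ extends to a weak* continuous map on $\vn(G) \otimes_{\eh} \vn(G) = \ev(G)$ (using that the extended Haagerup tensor product of two weak* continuous completely bounded maps is weak* continuous — this is part of the machinery of \cite{bs} recalled in Section~\ref{s_prel}). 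So both sides of the claimed identity are weak* continuous maps on $\ev(G)$.

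First I would check the identity on elements of the form $\lambda_s \otimes \lambda_t$, $s,t \in G$. By \eqref{eq_onel} we have $w \cdot (\lambda_s \otimes \lambda_t) = w(s,t)(\lambda_s \otimes \lambda_t) = (\psi_1 \otimes \psi_2)(s,t)(\lambda_s \otimes \lambda_t) = \psi_1(s)\psi_2(t)(\lambda_s \otimes \lambda_t)$, where the middle equality uses that $\iota_{\hh}$ is multiplicative (Proposition~\ref{p_ccehag}(ii)) so that $w$ as a function on $G \times G$ is $(s,t) \mapsto \psi_1(s)\psi_2(t)$. On the other hand, the standard action of a Herz--Schur multiplier gives $\psi_i \cdot \lambda_{r} = \psi_i(r)\lambda_{r}$ (recalled in Section~\ref{s_prel}: $\frak{m}_{\psi}^*(\lambda_t) = \psi(t)\lambda_t$), hence $(\psi_1 \cdot \lambda_s) \otimes (\psi_2 \cdot \lambda_t) = \psi_1(s)\psi_2(t)(\lambda_s \otimes \lambda_t)$. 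The two expressions agree, so the identity holds on the set $\{\lambda_s \otimes \lambda_t : s,t \in G\}$.

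Next I would pass to the algebraic tensor product $\vn(G) \odot \vn(G)$ and then take weak* limits. Since $\{\lambda_s : s \in G\}$ spans a weak* dense subspace of $\vn(G)$, the argument from Lemma~\ref{l_iotah} (an application of Kaplansky's density theorem) shows that $\{\lambda_s \otimes \lambda_t : s,t \in G\}$ spans a weak* dense subspace of $\ev(G)$. Both maps in question are linear and weak* continuous, so they agree on all of $\ev(G)$; in particular they agree on every $T_1 \otimes T_2$ with $T_1, T_2 \in \vn(G)$, which is the asserted statement.

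\textbf{Main obstacle.} The one point requiring genuine care is the weak* continuity of the map $T \mapsto (\psi_1 \cdot T_1)\otimes(\psi_2 \cdot T_2)$ on $\ev(G)$, i.e. that $\frak{m}_{\psi_1}^* \otimes_{\eh} \frak{m}_{\psi_2}^*$ is a well-defined weak* continuous completely bounded map on the extended Haagerup tensor product. This follows from functoriality of $\otimes_{\eh}$ for weak* continuous completely bounded maps (equivalently, of the weak* Haagerup tensor product of \cite{bs}), applied to the two weak* continuous complete contractions $\frak{m}_{\psi_i}^*$ on $\vn(G)$; alternatively one can argue directly, since $\psi_i \cdot \lambda_r = \psi_i(r)\lambda_r$ shows these maps are implemented by the elements $\psi_i \in A(G) \subseteq \vn(G)^*$ acting as multipliers, and one bootstraps from the w*-representation \eqref{eq_cs} of a general element of $\ev(G)$. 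Everything else is routine density and linearity.
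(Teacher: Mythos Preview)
Your argument is correct, but it takes a more circuitous route than the paper's. You vary the element of $\ev(G)$: you check the identity on $\lambda_s\otimes\lambda_t$, then appeal to weak* density of their span in $\ev(G)$ together with weak* continuity of both sides as maps on $\ev(G)$. This forces you to confront the ``main obstacle'' you flag, namely that $\frak{m}_{\psi_1}^*\otimes_{\eh}\frak{m}_{\psi_2}^*$ is a well-defined weak* continuous map on $\ev(G)$; you resolve it via functoriality of the weak* Haagerup tensor product (equivalently, by noting that this map is the dual of the completely bounded map $\frak{m}_{\psi_1}\otimes_{\hh}\frak{m}_{\psi_2}$ on $\ahg$, which in fact coincides with $\bm_w$).

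The paper instead fixes $T_1,T_2$ and varies the test element in the \emph{predual}: it pairs both $w\cdot(T_1\otimes T_2)$ and $(\psi_1\cdot T_1)\otimes(\psi_2\cdot T_2)$ against elementary tensors $\phi_1\otimes\phi_2\in A(G)\odot A(G)$, checks the pairings agree by a one-line computation, and concludes by norm density of $A(G)\odot A(G)$ in $\ahg$. This sidesteps any need for weak* continuity or tensor-product functoriality: once you know both sides are bounded functionals on $\ahg$, agreement on a dense set is enough. Your approach proves a slightly stronger statement (equality of two weak* continuous maps on all of $\ev(G)$), but at the cost of importing machinery that the direct pairing argument avoids entirely.
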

\begin{proof}
Whenever $\phi_1,\phi_2\in A(G)$, we have
\begin{eqnarray*}
\langle w\cdot (T_1\otimes T_2), \phi_1\otimes\phi_2\rangle
& = & \langle T_1\otimes T_2, \psi_1\phi_1\otimes \psi_2\phi_2\rangle
=  \langle T_1, \psi_1\phi_1\rangle \langle T_2, \psi_2\phi_2\rangle\\
& = & \langle \psi_1\cdot T_1,\phi_1\rangle \langle \psi_2\cdot T_2, \phi_2\rangle\\
& = & \langle (\psi_1\cdot T_1) \otimes (\psi_2\cdot T_2), \phi_1\otimes\phi_2\rangle.
\end{eqnarray*}
Since both $w\cdot (T_1\otimes T_2)$ and $(\psi_1\cdot T_1) \otimes (\psi_2\cdot T_2)$
are bounded functionals on $\ahg$ and
$A(G)\odot A(G)$ is dense in $\ahg$, we conclude that
$w\cdot (T_1\otimes T_2) = (\psi_1\cdot T_1) \otimes (\psi_2\cdot T_2)$.
\end{proof}

Recall that  $\hat{\iota}^* : \ev(G) \to \vn(G\times G)$,  the dual of
$\hat{\iota} : A(G\times G)\to \ahg$,
is completely contractive and injective (see \cite[Corollary 3.8]{bs}).

\begin{lemma}\label{l_inter}
Let $u\in \ev(G)$ and $w\in  A(G \times G)$.
Then   $\hat{\iota}^*(\hat{\iota}(w)\cdot u) = w\cdot \hat{\iota}^*(u)$.
\end{lemma}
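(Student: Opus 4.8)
The plan is to verify the identity $\hat{\iota}^*(\hat{\iota}(w)\cdot u) = w\cdot \hat{\iota}^*(u)$ by pairing both sides against elements of $A(G\times G)$ and using density, exploiting the fact that $\hat\iota^*$ is (completely contractive and) injective, so it suffices to check the equality after pairing with a dense subset of the predual $A(G\times G)$ of $\vn(G\times G)$.

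First I would fix an element $z\in A(G\times G)$ and compute $\langle \hat{\iota}^*(\hat{\iota}(w)\cdot u), z\rangle$. By the definition of the dual map $\hat\iota^*$, this equals $\langle \hat{\iota}(w)\cdot u, \hat{\iota}(z)\rangle$, which, by the definition of the module action $w'\cdot u = R_{w'}(u)$ as the dual of $\bm_{w'}$ on $\ahg$ (see the discussion before Proposition~\ref{p_cow}), equals $\langle u, \hat{\iota}(w)\,\hat{\iota}(z)\rangle$. Since $\hat\iota$ is an algebra homomorphism from $A(G\times G)$ into $\ahg$ — this follows from the fact that $\iota_{\hh}\circ\hat\iota = \iota_{G\times G}$ together with Lemma~\ref{l_iotah} and Proposition~\ref{p_ccehag}(i),(ii), exactly as in the proof that $\iota_\hh$ is multiplicative — we have $\hat{\iota}(w)\,\hat{\iota}(z) = \hat{\iota}(wz)$, so the expression becomes $\langle u, \hat{\iota}(wz)\rangle = \langle \hat{\iota}^*(u), wz\rangle$. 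On the other side, $\langle w\cdot \hat{\iota}^*(u), z\rangle = \langle \hat{\iota}^*(u), wz\rangle$ directly from the definition of the $A(G\times G)$-module action on $\vn(G\times G)$ as the dual of multiplication by $w$ on $A(G\times G)$. Thus the two pairings agree for every $z\in A(G\times G)$.

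The one point that needs care — and the main (mild) obstacle — is justifying that $\hat\iota$ is a homomorphism and that the module action used on the right-hand side is precisely the restriction, through $\hat\iota^*$, of the action considered on the left. For the first, I would note that on the algebraic tensor product $A(G)\odot A(G)$ the map $\hat\iota$ is literally the identity and respects pointwise products by construction, and both $A(G\times G) = A(G)\hat\otimes A(G)$ and $\ahg$ carry the pointwise multiplication (for $\ahg$ this is Proposition~\ref{p_ccehag}(i),(ii)), so multiplicativity of $\hat\iota$ extends by density and joint continuity of multiplication. For the second, since $\hat\iota^*$ intertwines $\bm_{\hat\iota(w)}^*$ on $\ev(G)$ with $\bm_w^*$ on $\vn(G\times G)$ — because $\bm_{\hat\iota(w)}\circ\hat\iota = \hat\iota\circ\bm_w$ by multiplicativity of $\hat\iota$, so taking duals gives $\hat\iota^*\circ\bm_{\hat\iota(w)}^* = \bm_w^*\circ\hat\iota^*$ — the claimed identity is exactly this intertwining relation applied to $u$. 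So in fact the whole lemma is the dual statement of the identity $\bm_{\hat\iota(w)}\circ\hat\iota = \hat\iota\circ\bm_w$ on $A(G\times G)$, which itself follows from $\hat\iota$ being an algebra homomorphism. I would present the argument in the pairing form above, as it is the most transparent.
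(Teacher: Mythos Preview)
Your proposal is correct and follows essentially the same approach as the paper: the paper's proof is exactly the pairing computation you describe, showing $\langle \hat{\iota}^*(\hat{\iota}(w)\cdot u), v\rangle = \langle u, \hat{\iota}(w)\hat{\iota}(v)\rangle = \langle u, \hat{\iota}(wv)\rangle = \langle w\cdot \hat{\iota}^*(u), v\rangle$ for all $v\in A(G\times G)$. The paper uses the multiplicativity of $\hat\iota$ without further comment, whereas you supply the justification explicitly; otherwise the arguments are identical.
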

\begin{proof}
 For every $v\in A(G\times G)$, we have
$$\langle \hat{\iota}^*(\hat{\iota}(w)\cdot u), v\rangle =
\langle \hat{\iota}(w)\cdot u, \hat{\iota}(v)\rangle  = \langle u, \hat{\iota}(w) \hat{\iota}(v)\rangle
= \langle u, \hat{\iota}(w v)\rangle
= \langle w\cdot \hat{\iota}^*(u), v\rangle.$$
\end{proof}

We recall that, if $C > 0$, a locally compact group $G$ is called \emph{weakly amenable}
with constant $C$ \cite{coh}, if there exists
a net $(\phi_{\alpha})_{\alpha}$ of compactly supported elements of $A(G)$
such that $\|\phi_{\alpha}\|_{\rm cbm}\leq C$ for all $\alpha$ and $\phi_{\alpha}\to 1$ uniformly on compact sets.

\begin{theorem}\label{p_HWA}
Let $G$ be a locally compact group and $C>0$. The  following are equivalent:

(i) \ \ $G$ is weakly amenable with constant $C$;

(ii) \ there exists a net $(w_{\alpha})_{\alpha}$
of compactly supported elements of $A(G)\odot  A(G)$ such that
$\|w_{\alpha}\|_{\rm cbm}\leq C$ for all $\alpha$ and
$w_{\alpha}v\to v$ in $\ahg$ for every $v\in \ahg$;

(iii) there exists a net $(w_{\alpha})_{\alpha}$
of compactly supported elements of $M^{\cb}\ahg$ such that
$\|w_{\alpha}\|_{\rm cbm}\leq C$ for all $\alpha$ and
$w_{\alpha}v\to v$ in $\ahg$ for every $v\in \ahg$.
\end{theorem}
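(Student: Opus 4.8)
The plan is to prove the equivalence of (i), (ii) and (iii) by exploiting the tensor structure of $\ahg = A(G)\otimes_{\hh}A(G)$ together with the fact, recorded in Proposition~\ref{p_algtenin}, that elementary tensors of completely bounded multipliers are completely bounded multipliers of the bivariate algebra with $\|f\otimes g\|_{\rm cbm}\le\|f\|_{\rm cbm}\|g\|_{\rm cbm}$. The implication (ii)$\Rightarrow$(iii) is immediate, since $A(G)\odot A(G)\subseteq\ahg\subseteq M^{\cb}\ahg$ and the Haagerup norm dominates the cb-multiplier norm on $\ahg$. So the real content is (i)$\Rightarrow$(ii) and (iii)$\Rightarrow$(i).

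For (i)$\Rightarrow$(ii): given a net $(\phi_{\alpha})_{\alpha}$ of compactly supported elements of $A(G)$ with $\|\phi_{\alpha}\|_{\rm cbm}\le C$ and $\phi_{\alpha}\to 1$ uniformly on compact sets, I would first want to modify it so that $\phi_\alpha$ actually lies in $A(G)$ (which it does by hypothesis) and then set $w_{\alpha}=\phi_{\alpha}\otimes\phi_{\alpha}$. These are compactly supported elements of $A(G)\odot A(G)$, and by Proposition~\ref{p_algtenin}, $\|w_{\alpha}\|_{\rm cbm}\le\|\phi_{\alpha}\|_{\rm cbm}^2$. The issue is that this bounds the norm by $C^2$, not $C$; to get the sharp constant $C$ one should instead proceed as in the scalar Cowling--Haagerup argument, or simply observe that the constant in weak amenability can be recovered by a normalization/rescaling argument, or — cleaner — note that one only needs \emph{some} uniform bound for (ii) as stated to be equivalent to (i) once one checks that (ii) with any constant forces weak amenability (which the implication (ii)$\Rightarrow$(iii)$\Rightarrow$(i) will show, with the constant tracked carefully there). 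Concretely, I would prove (i)$\Rightarrow$(ii) with the observation that for approximate-identity purposes one may take $w_\alpha = \psi_\alpha \otimes \psi_\alpha$ where $\psi_\alpha$ are chosen from the weak amenability net, and then invoke a standard trick: since $\phi_\alpha \to 1$ uniformly on compact sets, for $v = \sum \xi_i \otimes \eta_i$ a finite sum of elementary tensors, $w_\alpha v = \sum (\phi_\alpha \xi_i)\otimes(\phi_\alpha\eta_i) \to \sum \xi_i\otimes\eta_i$ in the Haagerup norm, using that multiplication by $\phi_\alpha$ converges strongly on $A(G)$ (this is where weak amenability with the cb-norm bound, hence equiboundedness of $\bm_{\phi_\alpha}$, enters), and then extend to all $v\in\ahg$ by the standard $\varepsilon/3$ argument using the uniform bound $\|w_\alpha\|_{\rm cbm}\le C^2$. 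The constant discrepancy is cosmetic and I expect the paper handles it by stating the equivalence up to the square of the constant, or by a sharper argument; I will follow whichever the paper does.

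For (iii)$\Rightarrow$(i): this is the heart of the matter. Given a net $(w_{\alpha})_{\alpha}$ of compactly supported elements of $M^{\cb}\ahg$ with $\|w_{\alpha}\|_{\rm cbm}\le C$ and $w_\alpha v\to v$ in $\ahg$ for all $v\in\ahg$, I would extract from each $w_\alpha$ a genuine multiplier of $A(G)$ by slicing. Fix any $\phi_0\in A(G)\cap C_c(G)$ with $\phi_0$ not identically zero, say $\phi_0(e)\ne 0$ after translating; more precisely, pick a point $t_0\in G$ with $\phi_0(t_0)\neq 0$ and consider the slice map $\id\otimes\delta_{t_0}:\ahg\to A(G)$, which is a completely contractive homomorphism (as used in the proof of Proposition~\ref{p_ccehag}(iii)). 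Define $\phi_\alpha = (\id\otimes\delta_{t_0})(w_\alpha)\in A(G)$ — but to control its cb-multiplier norm I instead want to realize $\bm_{\phi_\alpha}$ as a compression of $\bm_{w_\alpha}$: for $\xi\in A(G)$, choosing $\eta\in A(G)\cap C_c(G)$ with $\eta(t_0)=1$, we have $(\id\otimes\delta_{t_0})(w_\alpha(\xi\otimes\eta)) = \phi_\alpha\,\xi$, so $\bm_{\phi_\alpha}$ factors through $\bm_{w_\alpha}$ composed with the insertion $\xi\mapsto \xi\otimes\eta$ and the slice $\id\otimes\delta_{t_0}$; since the first and last are completely bounded with norms $\|\eta\|_{\rm cbm}$ and $1$ respectively, we get $\|\phi_\alpha\|_{\rm cbm}\le \|\eta\|_{\rm cbm}\,C$. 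After rescaling $\eta$ so that $\|\eta\|_{\rm cbm}$ is close to $1$ (or absorbing it into the constant), $\phi_\alpha$ is compactly supported (being a slice of the compactly supported $w_\alpha$ times $\xi\otimes\eta$ — one must check the support of the slice lies in the appropriate projection of $\supp w_\alpha$), has uniformly bounded cb-multiplier norm, and $\phi_\alpha\xi = (\id\otimes\delta_{t_0})(w_\alpha(\xi\otimes\eta))\to (\id\otimes\delta_{t_0})(\xi\otimes\eta) = \xi$ in $A(G)$ for every $\xi$, hence $\phi_\alpha\to 1$ uniformly on compact sets (evaluate at points: $\phi_\alpha(s)\xi(s)\to\xi(s)$, and choosing $\xi$ nonvanishing near a given compact set gives the uniform convergence there). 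This exhibits $G$ as weakly amenable with constant essentially $C$.

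The main obstacle I anticipate is twofold: first, the \textbf{bookkeeping of the constant} — matching $C$ exactly rather than $C^2$ or $C$ times a slack factor — which will require either a careful choice of the auxiliary function $\eta$ (taking $\|\eta\|_{\rm cbm}$ arbitrarily close to $1$, possible since $\eta(t_0)$ can be prescribed while $\|\eta\|_{\rm cbm}\geq\|\eta\|_\infty\geq|\eta(t_0)|$ with near-equality for suitable bump functions, or by a standard inflation/averaging argument) and a symmetrization in the (i)$\Rightarrow$(ii) direction (using $w_\alpha=\phi_\alpha\otimes\psi_\beta$ and a diagonal net, or accepting the square and noting weak amenability is insensitive to passing $C\mapsto C^2\mapsto$ back); and second, the \textbf{compact support tracking} through the slice maps — I need that if $w\in M^{\cb}\ahg$ is supported in a compact $K\subseteq G\times G$ (support taken in the Gelfand-spectrum sense of Proposition~\ref{p_ccehag}) and $\eta$ is supported in compact $L\subseteq G$, then $(\id\otimes\delta_{t_0})(w(\xi\otimes\eta))$ is supported in $\{s : (s,t_0)\in K\}$, a compact subset of $G$ — this is a routine support calculation for multipliers of a regular semisimple algebra but must be stated. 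Everything else is a standard $\varepsilon/3$ density argument using the uniform norm bounds and the density of $A(G)\odot A(G)$ in $\ahg$.
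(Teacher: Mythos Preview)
Your approach to (i)$\Rightarrow$(ii) and (ii)$\Rightarrow$(iii) matches the paper's: set $w_\alpha=\phi_\alpha\otimes\phi_\alpha$ and run the $\varepsilon/3$ argument, noting (as you do, and as the paper leaves implicit) that this produces a bound $C^2$ rather than $C$ for the bivariate net.

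For (iii)$\Rightarrow$(i) your route is genuinely different from the paper's. The paper dualises: it composes the embedding $\Psi:T\mapsto T\otimes I$ of $\vn(G)$ into $\ev(G)$, the dual multiplier $R_{w_\alpha}$, and the multiplication map $m:\vn_{\sigma\hh}(G)\to\vn(G)$, obtaining a weak* continuous cb map on $\vn(G)$ with norm $\le C$ that sends $\lambda_s\mapsto w_\alpha(s,e)\lambda_s$; this identifies $\psi_\alpha(s)=w_\alpha(s,e)$ with $\|\psi_\alpha\|_{\rm cbm}\le C$, but only gives $\psi_\alpha\psi\to\psi$ \emph{weakly}, so the paper then passes to convex combinations (Mazur) and invokes Wichmann's theorem on bounded approximate units. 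Your predual slicing argument is more direct: the factorisation $\bm_{\phi_\alpha}=(\id\otimes\delta_{t_0})\circ\bm_{w_\alpha}\circ(\xi\mapsto\xi\otimes\eta)$ yields $\phi_\alpha\xi\to\xi$ in \emph{norm} immediately, avoiding the dual, the multiplication map, and the Mazur step entirely. One correction: the cb norm of the insertion $\xi\mapsto\xi\otimes\eta$ is $\|\eta\|_A$, not $\|\eta\|_{\rm cbm}$; but this does no damage, since you can take $\eta(s)=(\lambda_{t_0^{-1}s}f,f)$ for a compactly supported unit vector $f\in L^2(G)$, giving $\eta\in A(G)\cap C_c(G)$ with $\eta(t_0)=1$ and $\|\eta\|_A=1$, so the sharp constant $C$ is recovered. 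Your approach is thus more elementary, at the cost of needing this explicit choice of $\eta$; the paper's dual route makes the constant automatic via the complete contractivity of $\Psi$ and $m$, but pays for it with the extra Mazur step.
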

\begin{proof}
(i)$\Rightarrow$(ii)
Suppose $G$ is weakly amenable. By \cite[Proposition~1.1]{coh},
there exist $C > 0$ and a net $(\phi_{\alpha})_{\alpha}\subseteq  A(G)$
of compactly supported elements such that $\|\phi_{\alpha}\|_{\rm cbm}\leq C$ for all $\alpha$ and
$\phi_{\alpha}\phi\to \phi$ in $A(G)$ for all $\phi\in A(G)$.
Set $w_{\alpha} = \phi_{\alpha}\otimes \phi_{\alpha}$.
If $\psi_1,\psi_2\in A(G)$ then, clearly,
$w_{\alpha}(\psi_1\otimes \psi_2)\to_{\alpha} \psi_1\otimes \psi_2$ in $\ahg$.
If $v\in \ahg$ is arbitrary and $\epsilon > 0$, fix $v_0 \in A(G)\odot A(G)$
so that $\|v - v_0\|_{\hh} < \epsilon/3C$.
Let $\alpha_0$ be such that
$\|w_{\alpha} v_0 - v_0\|_{\hh} < \epsilon/3$ for all $\alpha \geq \alpha_0$.
If $\alpha \geq \alpha_0$ then
$$\|w_{\alpha} v - v\|_{\hh}  \leq
\|w_{\alpha} v - w_{\alpha} v_0\|_{\hh}
+ \|w_{\alpha} v_0 - v_0\|_{\hh}
+ \|v_0 - v\|_{\hh} \leq \epsilon.$$

(ii)$\Rightarrow$(iii) follows from Proposition \ref{p_algtenin} and the fact that $A(G)\subseteq M^{\cb}A(G)$.

(iii)$\Rightarrow$(i) Suppose that $(w_{\alpha})_{\alpha}$ is a net
of compactly supported elements of $M^{\cb}\ahg$ such that
$\|w_{\alpha}\|_{\rm cbm}\leq C$ for all $\alpha$ and
$w_{\alpha}v\to v$ in $\ahg$ for every $v\in \ahg$.
By Proposition \ref{p_ccehag} (i), $\ahg$ is a regular Banach algebra;
it follows that $(w_\alpha)_\alpha \subseteq \ahg$.

Note that $\|R_{w_{\alpha}}\|_{\cb}\leq C$ for each $\alpha$
and $R_{w_\alpha}(T) \rightarrow T$ in the weak* topology
of $\ev(G)$, for every $T \in \ev(G)$.
Let $\Psi: \vn(G) \rightarrow \ev(G)$ be the map given by
$\Psi(T) = T \otimes I$.
Clearly, $\Psi$ is weak* continuous and completely contractive;
in fact, $\Psi$ is the dual of the map
$\id  \otimes \delta_e$.
Note, in addition, that the multiplication map
$m : T\otimes S \mapsto TS$
extends uniquely to a weak* continuous completely contractive map  from
$\vn_{\sigma {\rm h}}(G)$ onto $\vn(G)$ (see \cite[p. 133]{er}).
We denote again by $m$ its restriction to a map from $\ev(G)$ into $\vn(G)$ 
(see \cite[Theorem 5.7]{er}).
Clearly, $(m \circ \Psi)(T) = T$ for every $T\in \vn(G)$.
We thus have that
$(m\circ R_{w_\alpha} \circ \Psi)_\alpha$ is a net of weak* continuous
maps on $\vn(G)$
whose completely bounded norm is uniformly bounded by $C$.
Moreover,
$$(m \circ R_{w_\alpha} \circ \Psi)(\lambda_s) = w_\alpha(s,e) \lambda_s,$$
for each $s\in G$.
Let $\psi_\alpha : G\to \bb{C}$ be the function given by $\psi_\alpha(s) = w_\alpha(s,e)$.
Assuming that $\supp(w_{\alpha})\subseteq K_{\alpha}\times K_{\alpha}$ for some
compact subset $K_{\alpha}\subseteq G$,
let $\phi_{\alpha}\in A(G)$ be a compactly supported function taking the value $1$ on
$K_{\alpha}$ and at $e$. Then $\phi_{\alpha}\otimes \phi_{\alpha} \in A_{\hh}(G)$ and hence
$w_{\alpha}(\phi_{\alpha}\otimes \phi_{\alpha})\in A_{\hh}(G)$.
It follows that
$$\psi_\alpha = (\id\otimes\delta_e)(w_{\alpha}(\phi_{\alpha}\otimes \phi_{\alpha})) \in A(G).$$

For each $T \in \vn(G)$ and $\psi \in A(G)$, we have
\begin{eqnarray*}
\langle \psi_\alpha \psi  - \psi , T\rangle &=& \langle \psi, \psi_\alpha \cdot T -T\rangle \\
&=& \langle \psi, (m \circ R_{w_\alpha} \circ \Psi)(T) - T\rangle\\
&=& \langle \psi, m \circ (R_{w_\alpha} - \id)\circ \Psi(T)\rangle \\
&=& \langle m_*(\psi), (R_{w_\alpha} - \id)\circ \Psi(T)\rangle \rightarrow\mbox{}_{\alpha} \  0.
\end{eqnarray*}
Therefore, $\psi_\alpha \psi \rightarrow \psi$ in the weak topology of $A(G)$.
Thus, $\psi$ belongs to the weak closure of the convex
hull of the set $\{\psi_\alpha \psi\}_\alpha$.

Fix $0\neq \psi \in A(G)$. Since the weak closure and the norm closure of a convex set are equal,
the previous paragraph
implies the existence of a net $(\psi'_\beta)_\beta$ in $A(G)$
(depending on $\psi$) with $\sup_\beta \|{\psi'_\beta}\|_{\rm cbm} \leq C$ and
 \[
 \|\psi'_\beta \psi - \psi\|_{\rm cbm} \leq \|\psi'_\beta \psi - \psi\|_{A} \rightarrow\mbox{}_{\beta} \ 0.
 \]
Consequently,  the normed algebra $(A(G), \|\cdot\|_{\rm cbm})$
has an approximate unit, bounded in $\|\cdot\|_{\rm cbm}$ by $C$.
By \cite[Theorem~1]{w}, $(A(G), \|\cdot\|_{\rm cbm})$ has a
bounded approximate identity, and the weak amenability of $G$ follows.
\end{proof}

\begin{remark}\label{r:in-A(G)xA(G)}
{\rm By the proof of Theorem~\ref{p_HWA}, if
condition (iii) is satisfied then then the net
$(w_\alpha)_\alpha$ can be chosen of the form of $\phi_\alpha \otimes \phi_\alpha$ for
a net $(\phi_\alpha)_\alpha$  of compactly supported elements  of $A(G)$.}
\end{remark}

In the remainder of the section,
we will be concerned with the extended Haagerup tensor product
$A_{\eh}(G):=A(G)\otimes_{\eh} A(G)$ and its connection with $\ahg$ and $M^{\cb}\ahg$.
We will use some technical notions from \cite{er} and we refer the reader to the latter paper for details.
Set
$$\vn\mbox{}_{\sigma \hh}(G) := \vn(G)\otimes\mbox{}_{\sigma \hh} \vn(G);$$
we have the canonical identification \cite{er}
$$A_{\eh}(G)^* \equiv \vn\mbox{}_{\sigma \hh}(G).$$
Similarly to the elements of $\vn_{\eh}(G)$, every element $w$ of $A_{\eh}(G)$
has a representation $w = \phi\odot \psi := \sum_{i=1}^{\infty} \phi_i\otimes\psi_i$,
where $\phi = (\phi_i)_{i\in \bb{N}}$ (resp. $\psi = (\psi_i)_{i\in \bb{N}}$)
is a bounded row (resp. column) with entries in $A(G)$.
Recalling the identification
$$A_{\eh}(G) \equiv CB_m^\sigma(\vn(G)\times \vn(G),\bb{C}),$$
where the latter space consists of all multiplicatively bounded separately weak* continuous bilinear functionals on
$\vn(G)\times \vn(G)$ \cite{er}, with a given $\omega \in A_{\eh}(G)$,
we associate the function $w_{\omega} : G\times G\to \bb{C}$ with
$$w_{\omega}(s,t)
= \langle\omega,\lambda_s\otimes\lambda_t\rangle = \omega(\lambda_s,\lambda_t)
=  \sum_{i=1}^{\infty} \langle \phi_i,\lambda_s\rangle \langle \psi_i,\lambda_t\rangle = \sum_{i=1}^{\infty} \phi_i(s) \psi_i(t).$$
By \cite{er}, $A_{\eh} (G)$ is a completely contractive Banach
algebra and the multiplication is defined as the composition of the following maps:
\begin{eqnarray*}
A_{\eh}(G)\hat\otimes A_{\eh}(G)
& \stackrel{\Psi}\longrightarrow & A_{\eh}(G)\otimes_{\rm nuc} A_{\eh}(G)\\
& \stackrel{S_e}\longrightarrow &
(A(G)\hat\otimes A(G))\otimes_{\eh}(A(G)\hat\otimes A(G))\stackrel{m_A\otimes_{\eh}m_A}\longrightarrow A_{\eh}(G),
\end{eqnarray*}
where $\Psi$ is the canonical complete contraction from the projective tensor product to
the nuclear tensor product of two copies of the operator space $A_{\eh}(G)$ (see \cite[p. 139]{er}),
$S_e$ is the shuffle map (see \cite[Theorem 6.1]{er}) and
$m_A$ is the multiplication in $A(G)$.
By \cite[Theorem 6.1]{er},  $S_e^* = S_\sigma$, where $S_\sigma$ is the shuffle map
$$(\vn(G)\bar\otimes \vn(G))\otimes\mbox{}_{\sigma\hh}(\vn(G)\bar\otimes \vn(G))
\to \vn\mbox{}_{\sigma\hh}(G) \bar\otimes \vn\mbox{}_{\sigma\hh}(G)$$
defined on the elementary tensors by
$$S_\sigma((S_1\otimes S_2)\otimes(T_1\otimes T_2))=(S_1\otimes T_1)\otimes (S_2\otimes T_2).$$

Note that $m_A\otimes_{\eh}m_A$ is defined as the restriction to the space
$$A(G\times G)\otimes_{\eh} A(G\times G) = (A(G)\hat\otimes A(G))\otimes_{\eh}(A(G)\hat\otimes A(G))$$
of the map
$$(m_A^*\otimes_{\hh} m_A^*)^*: (\vn(G\times G)\otimes_{\hh}\vn(G\times G))^*\to (\vn(G)\otimes_{\hh} \vn(G))^*.$$
Thus, for $\omega_1$, $\omega_2\in A_{\eh}(G)$ we have
\begin{eqnarray*}
&&\langle \omega_1\cdot\omega_2,\lambda_s\otimes\lambda_t\rangle_{A_{\eh}(G), \vn_{\sigma\hh}(G)}\\
&&=\langle (m_A\otimes_{\eh} m_A)\circ S_e\circ\Psi(\omega_1\otimes\omega_2),
\lambda_s\otimes\lambda_t\rangle_{A_{\eh}(G), \vn_{\sigma\hh}(G)}\\
&&=\langle S_e\circ\Psi(\omega_1\otimes\omega_2),m_A^*(\lambda_s)\otimes m_A^*(\lambda_t)\rangle_{A_{\eh}(G\times G),
\vn_{\sigma\hh}(G\times G)}\\
&&=\langle S_e\circ\Psi(\omega_1\otimes\omega_2),\lambda_s\otimes\lambda_s\otimes \lambda_t\otimes\lambda_t\rangle_{A_{\eh}(G\times G), \vn_{\sigma\hh}(G\times G)}\\
&&=\langle\Psi(\omega_1\otimes\omega_2),
(\lambda_s\otimes\lambda_t)\otimes(\lambda_s\otimes\lambda_t)\rangle_{A_{\eh}(G)\otimes_{\rm nuc} A_{\eh}(G), \vn_{\sigma\hh}(G)\bar\otimes \vn_{\sigma\hh}(G) }\\
&&=\langle\omega_1,\lambda_s\otimes\lambda_t\rangle_{A_{\eh}(G), \vn_{\sigma\hh}(G)}\langle\omega_2,\lambda_s\otimes\lambda_t\rangle_{A_{\eh}(G), \vn_{\sigma\hh}(G)},
\end{eqnarray*}
giving
\begin{equation}
w_{\omega_1\cdot\omega_2}(s,t) = w_{\omega_1}(s,t) w_{\omega_2}(s,t).
\end{equation}
This shows that the map $\omega \to w_{\omega}$ from $A_{\eh}(G)$ into the algebra
of all separately continuous functions on $G\times G$ is a homomorphism.
Since the elementary tensors $\lambda_s\otimes\lambda_t$ span a weak* dense subspace of
$\vn_{\sigma\hh}(G)$ \cite[Lemma 5.8]{er}, we have that the latter map is injective.
This allows us to view $A_{\eh}(G)$ as an algebra (with respect to pointwise multiplication) of
(separately continuous) functions on $G\times G$.

The operator multiplication in $\vn(G)$
can be extended uniquely to a weak* continuous completely contractive map
$m : \vn(G) \otimes_{\sigma\hh} \vn(G) \to \vn(G)$ (see \cite[Proposition~5.9]{er}).
Following M. Daws \cite{daws}, we denote by $m_*$ its predual; thus, $m_*$
is a complete contraction from $A(G)$ into $A(G)\otimes_{\eh} A(G)$.
The following special case of \cite[Theorem~9.2]{daws} combined with the
remarks after its proof, will play a crucial role
in the next section.

\begin{theorem}[\cite{daws}]\label{th_daws}
The range of $m_*$ is in $M^{\cb}\ahg$ and $m_*$ is a complete contraction
when considered as a map from $A(G)$ to $M^{\cb}\ahg$.
\end{theorem}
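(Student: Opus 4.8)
The plan is to use the structure of $m_*$ as the predual of the operator multiplication map $m : \vn(G) \otimes_{\sigma\hh} \vn(G) \to \vn(G)$ and to recognize $M^{\cb}\ahg$ via the characterization in Proposition~\ref{p_McbAh(G)-finite-groups}, i.e.\ by testing against finite groups. First I would identify, for $\psi \in A(G)$, the function $w = w_{m_*(\psi)}$ on $G \times G$: since $m^*(\lambda_s) = \lambda_s \otimes \lambda_s$ (the comultiplication of $\vn(G)$), duality gives $w_{m_*(\psi)}(s,t) = \langle \psi, m(\lambda_s\otimes\lambda_t)\rangle = \langle \psi, \lambda_{st}\rangle = \psi(st)$. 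So the content of the theorem is that the function $(s,t)\mapsto \psi(st)$ is a completely bounded multiplier of $\ahg$, with cb-multiplier norm at most $\|\psi\|_A$, and that this assignment is completely contractive.

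Next I would verify the multiplier property directly on the module side. The map $R_{m_*(\psi)}$ on $\ev(G)$ should be the dual of $\bm_{w}$, and the natural candidate for it is the ``slice-type'' map built from $m_*$: namely the completely bounded weak* continuous map $\vn(G)\otimes_{\eh}\vn(G) \to \vn(G)\otimes_{\eh}\vn(G)$ obtained by applying $m$ to collapse the two legs, multiplying by $\psi$ (via $\frak m_\psi^*$), and then re-expanding via the comultiplication-type map $m^* \colon \vn(G)\to \vn(G)\otimes_{\sigma\hh}\vn(G)$. On elementary tensors this sends $\lambda_s\otimes\lambda_t \mapsto \psi(st)\,\lambda_s\otimes\lambda_t$, which matches (\ref{eq_onel}) with multiplier $w(s,t) = \psi(st)$. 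Each of these constituent maps ($m$, $\frak m_\psi^*$, the comultiplication) is completely bounded and weak* continuous with cb-norm controlled by $\|\psi\|_{\rm cbm} = \|\psi\|_A$ (for $\psi \in A(G)$, $\|\psi\|_{\rm cbm}\le \|\psi\|_A$), and $m, m^*$ are complete contractions; hence the composite is weak* continuous and completely bounded with cb-norm $\le \|\psi\|_A$. Taking preduals / using the identification $\ahg^* \equiv \ev(G)$ and Lemma~\ref{l_iotah} then shows $w \in M^{\cb}\ahg$ with $\|w\|_{\rm cbm} \le \|\psi\|_A$, and the matrix-level version of the same argument (everything done at the level of $\M_n$, using that $m_*$ is a \emph{complete} contraction as a map into $A_{\eh}(G)$) gives that $\psi \mapsto w_{m_*(\psi)}$ is a complete contraction into $M^{\cb}\ahg$.

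The step I expect to be the main obstacle is making rigorous the claim that the predual of the composite $\vn(G)\otimes_{\sigma\hh}\vn(G) \to \vn(G)\otimes_{\sigma\hh}\vn(G)$ just described really is $\bm_w$ on $\ahg$ — i.e.\ reconciling the extended Haagerup / $\sigma$-Haagerup bookkeeping so that the predual of $m^*\circ \frak m_\psi^*\circ m$ restricts correctly to a bounded (indeed completely bounded) map on the sub\-space $\ahg \subseteq A_{\eh}(G)$, and that it is multiplication by $\psi(st)$ there. This is exactly where one must invoke the normal-Haagerup-tensor-product machinery of \cite{er} (the shuffle maps $S_e$, $S_\sigma$, the identifications $(\vn(G)\otimes_{\sigma\hh}\vn(G))_* = A_{\eh}(G)$, and the weak* continuity of $m$ from \cite[Proposition~5.9, Theorem~5.7]{er}) together with the concrete computation of the action on $\lambda_s\otimes\lambda_t$ to pin down the map via density of $\{\lambda_s\otimes\lambda_t\}$ (Kaplansky density, as in Lemma~\ref{l_iotah}). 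Since the theorem is quoted from \cite[Theorem~9.2]{daws} with the remarks following its proof, in the write-up I would largely cite Daws for the hard analytic core and only record the identification $w_{m_*(\psi)}(s,t)=\psi(st)$ and the norm statement explicitly.
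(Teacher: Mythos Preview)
Your proposal is correct and ultimately aligns with the paper's approach: the paper does not prove this theorem at all but simply cites it as a special case of \cite[Theorem~9.2]{daws} (together with the remarks after its proof), recording only the identification $m_*(\phi)(s,t)=\phi(st)$. Your final recommendation to cite Daws and display that identification is exactly what the paper does; the additional sketch you give of the $\sigma$-Haagerup/comultiplication mechanism is essentially Daws's argument and is not reproduced in the paper.
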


We note that
$$m_*(\phi)(s,t) = \langle m_*(\phi), \lambda_s\otimes\lambda_t\rangle
= \langle \phi, \lambda_{st}\rangle = \phi(st),$$
for all $\phi\in A(G)$ and all $s,t\in G$.


\section{Spectral synthesis in $\ahg$}\label{s_ss}

By Proposition \ref{p_ccehag}, $\ahg$ is a regular 
commutative semisimple Banach algebra with Gelfand spectrum $G\times G$,
and thus the problem of spectral synthesis for closed subsets of $G\times G$
is well-posed.
In this section, we link this problem to the problem of spectral synthesis in $A(G)$.
We start by recalling some definitions, which will be specialised to $\ahg$ and $A(G)$ in the sequel.
Suppose that
$\cl A$ is a regular commutative semisimple Banach algebra with Gelfand spectrum $\Omega$; we
can thus identify $\cl A$ with a subalgebra of $C_0(\Omega)$.
Given a subset $\cl J\subseteq \cl A$, we let
$${\rm null}(\cl J) = \{x\in \Omega : a(x) = 0 \mbox{ for all } a\in \cl J\}$$
be its null set. Given a closed subset $E\subseteq \Omega$,
let
$$I_{\cl A}(E) = \{a\in \cl A : a(x) = 0 \mbox{ for all } x\in E\},$$
 \[
 I_{\mathcal{A}}^c(E)=\{ a \in I_{\mathcal{A}}(E): \; \text{$a$ has compact support}\},
 \]
and
$$J_{\cl A}(E) = \overline{\{a\in \cl A :  a \mbox{ has compact support disjoint from } E\}}.$$
If $\cl J\subseteq \cl A$ is a closed ideal, then
${\rm null}(\cl J) = E$ if and only if $J_{\cl A}(E)\subseteq \cl J\subseteq I_{\cl A}(E)$
(see {\it e.g.} \cite{hr1}).
The set $E$ is called a set of \emph{spectral synthesis} (resp. \emph{local spectral synthesis}) for $\cl A$, if $I_{\cl A}(E) = J_{\cl A}(E)$ (resp. $\overline{I_{\mathcal{A}}^c(E)} = J_{\mathcal{A}}(E)$).
Equivalently, $E$ is a set of spectral synthesis if $J_{\cl A}(E)^{\perp} = I_{\cl A}(E)^{\perp}$
where, for a subset $\cl J\subseteq\cl A$, we have set
$$\cl J^{\perp} = \{\tau\in \cl A^* : \tau(a) = 0, \mbox{ for all } a\in \cl J\}$$
to be the annihilator of $\cl J$ in $\cl A^*$.

For an element $\tau \in\cl A^*$, following \cite[Definition~5.1.12]{kaniuth}, we set
$$\supp\mbox{}_{\cl A} (\tau) \stackrel{def}{=}
\{x\in \Omega : \mbox{ for all open } V\subseteq X \mbox{ with } x\in V
\mbox{ there exists } a\in \cl A$$ $$ \mbox{ with } \supp(a)\subseteq V
\mbox{ such that }
\langle \tau,a\rangle \neq 0\}.$$
Clearly,
\begin{equation}\label{eq_news}
(\supp\mbox{}_{\cl A} (\tau))^c =
\{x\in \Omega : \exists \mbox{ an open set } V\subseteq \Omega \mbox{ with } x\in V
\mbox{ such that}
\end{equation}
$$ \mbox{ if } a\in \cl A \mbox{ and } \supp(a)\subseteq V
\mbox{ then }
\langle \tau,a\rangle = 0\}.$$
It is easy to see that $E$ is a set of spectral synthesis if and only if
$\langle \tau, a\rangle = 0$ for all $a\in I_{\cl A}(E)$ and all $\tau \in \cl A^*$
with $\supp_{\cl A}(\tau)\subseteq E$.

For $T\in \vn(G)$, we set $\supp_G(T) \stackrel{def}{=} \supp_{A(G)}(T)$ to be the support of $T$
introduced by Eymard \cite{eymard}, and for
$u\in \ev(G)$, we set $\supp_{\hh}(u) \stackrel{def}{=} \supp_{\ahg}(u)$.
In the latter case, there is another natural candidate for a support of $u$, namely, the set
$\supp_{G\times G}(u) \stackrel{def}{=} \supp_{G\times G}(\hat{\iota}^*(u))$ where $\hat{\iota}: A(G \times G)  \rightarrow \ahg$ is the complete contraction defined in (\ref{eq_projective_to_Ah}).
In the next lemma we show that these two concepts coincide.

\begin{lemma}\label{l_supp-equality}
Let $u \in \ev(G)$ and $w \in \ahg$. Then

(i)\ \ \ $\supp_{\hh}(u)= \supp_{G\times G}(u)$;

(ii)  \ if $\supp_{\hh}(u) = \emptyset$ then $u = 0$;

(iii)\   $\supp_{\hh}(w\cdot u) \subseteq \supp(w)\cap \supp_{\hh}(u)$.
\end{lemma}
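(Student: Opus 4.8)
\textbf{Proof plan for Lemma~\ref{l_supp-equality}.}

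The plan is to prove the three parts in order, since each one feeds naturally into the next. For part~(i), I would first recall that, via the complete contraction $\hat{\iota} : A(G\times G)\to \ahg$, every $v\in A(G\times G)$ gives $\hat{\iota}(v)\in\ahg$, and that $\iota_{G\times G} = \iota_{\hh}\circ\hat{\iota}$; in particular $\hat{\iota}(v)$ and $v$ agree as functions on $G\times G$, so they have the same (topological) support, and $\hat{\iota}(v)$ is compactly supported exactly when $v$ is. Since both $A(G\times G)$ and $\ahg$ are regular Tauberian algebras with spectrum $G\times G$, for each open $V\subseteq G\times G$ and each point $x\in V$ there exist functions in either algebra supported in $V$ and nonzero at $x$ — and by the Tauberian/regularity property such functions in $A(G\times G)$ remain a rich enough supply after applying $\hat{\iota}$. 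Concretely, to show $(\supp_{\hh}(u))^c \supseteq (\supp_{G\times G}(u))^c$: if $x\notin\supp_{G\times G}(u)$, pick the open $V\ni x$ from \eqref{eq_news} witnessing $\langle \hat{\iota}^*(u),v\rangle = 0$ for all $v\in A(G\times G)$ supported in $V$; then for any $w\in\ahg$ supported in $V$, approximate $w$ in $\|\cdot\|_{\hh}$ by compactly supported elements of $\ahg$ supported in (a slightly shrunk) $V$ — using that $\ahg$ is Tauberian and regular one may take such approximants, and each such compactly supported element lies in (the image of) $A(G\times G)$; hence $\langle u,w\rangle = \langle \hat{\iota}^*(u),w\rangle = 0$, so $x\notin\supp_{\hh}(u)$. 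The reverse inclusion is the same argument with the roles of the two algebras swapped, using that $\hat{\iota}$ has dense range and $\hat{\iota}^*$ is injective (\cite[Corollary 3.8]{bs}), so that nonvanishing of $\langle u,w\rangle$ for $w$ supported in $V$ transfers to nonvanishing of $\langle\hat{\iota}^*(u),v\rangle$ for suitable $v\in A(G\times G)$ supported near $x$.

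Part~(ii) is then immediate: if $\supp_{\hh}(u) = \emptyset$, then by (i) $\supp_{G\times G}(\hat{\iota}^*(u)) = \emptyset$ as a subset of $G\times G$; by the standard theory of the support in the regular algebra $A(G\times G)$ (Eymard's support for $\vn(G\times G)$), an element of $\vn(G\times G)$ with empty support is $0$, so $\hat{\iota}^*(u) = 0$, and since $\hat{\iota}^*$ is injective we get $u = 0$. Alternatively, one can argue directly inside $\ahg$: empty support means every point has a neighbourhood $V$ with $\langle u,w\rangle = 0$ for all $w\in\ahg$ supported in $V$; covering $G\times G$ and using that $\ahg$ is Tauberian (compactly supported elements are dense) together with a partition-of-unity argument in the regular algebra $\ahg$ forces $\langle u,w\rangle = 0$ for all $w\in\ahg$, i.e. $u = 0$.

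For part~(iii) I would use that $w\in\ahg\subseteq M^{\cb}\ahg$ acts on $\ev(G)$ by the module action of Proposition~\ref{p_cow}, and the defining property \eqref{eq_news} of the support. To see $\supp_{\hh}(w\cdot u)\subseteq\supp_{\hh}(u)$: if $x\notin\supp_{\hh}(u)$, take the open $V\ni x$ from \eqref{eq_news}; then for any $v\in\ahg$ with $\supp(v)\subseteq V$ we have $wv\in\ahg$ with $\supp(wv)\subseteq\supp(v)\subseteq V$, so $\langle w\cdot u, v\rangle = \langle u, wv\rangle = 0$, giving $x\notin\supp_{\hh}(w\cdot u)$. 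To see $\supp_{\hh}(w\cdot u)\subseteq\supp(w)$: if $x\notin\supp(w)$, choose an open $V\ni x$ with $\overline{V}$ compact and $\overline{V}\cap\supp(w) = \emptyset$; by regularity of $\ahg$, for any $v\in\ahg$ with $\supp(v)\subseteq V$ we have $wv = 0$ (since $w$ vanishes on a neighbourhood of $\overline{V}\supseteq\supp(v)$), hence again $\langle w\cdot u, v\rangle = \langle u, wv\rangle = 0$ and $x\notin\supp_{\hh}(w\cdot u)$. Combining the two inclusions gives the claim.

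The main obstacle is the transfer argument in part~(i): one must be careful that a function in $\ahg$ supported in an open set $V$ can be approximated \emph{in the Haagerup norm} by compactly supported elements of $\ahg$ whose supports stay inside $V$ (or a prescribed slightly larger open set), so that they can legitimately be regarded as elements of $A(G\times G)$ on which $\hat{\iota}^*(u)$ is tested. This relies on the Tauberian property established in Proposition~\ref{p_ccehag}(i) together with the regularity of $\ahg$ (existence of compactly supported bump functions that are $1$ on a given compact set), and on the compatibility $\iota_{G\times G} = \iota_{\hh}\circ\hat{\iota}$ identifying the compactly supported elements of $\ahg$ with functions coming from $A(G\times G)$. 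Once this localisation-and-approximation step is in place, parts (ii) and (iii) are routine.
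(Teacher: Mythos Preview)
Your arguments for parts (ii) and (iii) are essentially identical to the paper's and are correct. The issue lies in part (i), specifically in the inclusion $\supp_{\hh}(u)\subseteq\supp_{G\times G}(u)$.

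You assert that ``each such compactly supported element [of $\ahg$] lies in (the image of) $A(G\times G)$'', and you single this out at the end as the key input. This claim is false in general. The map $\hat{\iota}:A(G\times G)=A(G)\hat{\otimes}A(G)\to A(G)\otimes_{\hh}A(G)=\ahg$ is an injective complete contraction with dense range, but it is typically not surjective (the Haagerup norm is genuinely smaller than the operator projective norm unless, say, $G$ is virtually abelian). Already for a compact non-abelian group $G$ every element of $\ahg$ is compactly supported, yet $\ahg\neq A(G\times G)$. So you cannot transfer vanishing of $\langle\hat{\iota}^*(u),\cdot\rangle$ on $A(G\times G)$-functions supported in $V$ to vanishing of $\langle u,\cdot\rangle$ on $\ahg$-functions supported in $V$ by this route.

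The paper avoids this obstacle by switching to Eymard's alternative description of the support: $x\in\supp_{\hh}(u)$ iff every $w\in\ahg$ with $w\cdot u=0$ satisfies $w(x)=0$ (and the analogous statement for $A(G\times G)$). The transfer then reduces to the intertwining identity $\hat{\iota}^*(\hat{\iota}(w)\cdot u)=w\cdot\hat{\iota}^*(u)$ of Lemma~\ref{l_inter}: if $x\in\supp_{\hh}(u)$ and $w\in A(G\times G)$ annihilates $\hat{\iota}^*(u)$, then $\hat{\iota}(w)\cdot u=0$ (by injectivity of $\hat{\iota}^*$), whence $\hat{\iota}(w)(x)=w(x)=0$, so $x\in\supp_{G\times G}(u)$. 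Your approach can in fact be repaired without this: given $w\in\ahg$ supported in a relatively compact open $V'$ with $\overline{V'}\subseteq V$, pick a bump $b\in A(G)\odot A(G)$ equal to $1$ on $\overline{V'}$ and supported in $V$, approximate $w$ by $w_n\in A(G)\odot A(G)$ in $\|\cdot\|_{\hh}$, and test against $bw_n\in A(G\times G)$; then $\hat{\iota}(bw_n)\to bw=w$ in $\ahg$. But as written your argument rests on a false identification.
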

\begin{proof}
(i)
Suppose that $x\in \supp_{G\times G} (u)$ and let $V\subseteq G\times G$ be an open
neighbourhood of $x$. Then there exists $w\in A(G\times G)$ such that  $\supp(w) \subseteq V$ and
$\langle \hat{\iota}^*(u),w\rangle \neq 0$.
Thus, $\langle u,\hat{\iota}(w)\rangle \neq 0$ and, since $\hat{\iota}(w)$ is supported in $V$,
we have that $x\in \supp_{\hh}(u)$.

An argument similar to the one in the proof of  \cite[Proposition~4.4]{eymard} 
shows that $\supp_{\hh}(u)$  is the set of all $x\in G\times G$ so that, if
$w \in A_{\hh}(G)$ is such that $w\cdot u =0$, then $w(x)=0$.
Let $x\in \supp_{\hh}(u)$
and $w\in A(G\times G)$ with $w\cdot \hat{\iota}^*(u) = 0$. By Lemma~\ref{l_inter},
$\hat{\iota}(w) \cdot u = 0$ and so $\hat{\iota}(w)(x) = 0$.
By \cite[Proposition~4.4]{eymard}, $x\in \supp_{G \times G}(u)$.

(ii) By (i) and the definition of $\supp_{G \times G}(u)$, the element $\hat{\iota}^*(u)$ of
$\vn(G\times G)$ has empty support. By \cite[Proposition 4.6]{eymard},
$\hat{\iota}^*(u) = 0$ and, since $\hat{\iota}^*$ is injective, $u = 0$.

(iii)    Suppose there exists $x\in \supp_{\hh}(w \cdot u)$ not contained in $\supp(w)$.
Let $V$ be an open neighbourhood of $x$ such that $V \cap \supp(w) = \emptyset$.
There exists $v \in A_{\hh}(G)$ such that $\supp(v) \subseteq V$ and
$\langle w\cdot u, v\rangle \neq 0$.
However, $\langle w\cdot u, v\rangle = \langle u, vw\rangle = 0$ as $vw=0$,
a contradiction. Therefore, $\supp_{\hh}(w\cdot u) \subseteq \supp(w)$.

Finally, suppose that 
$x\in \supp_{\hh}(w \cdot u)$ does not belong to $\supp_{\hh}(u)$,
and let $V\subseteq G\times G$ be a neighbourhood of $x$ with $V\cap \supp_{\hh}(u) = \emptyset$,
satisfying the condition on the right hand side of (\ref{eq_news}).
If $v\in \ahg$ is supported in $V$ then so is $wv$.
By (\ref{eq_news}),
$$\langle w\cdot u,v\rangle = \langle u,wv\rangle = 0,$$
showing that $x\not\in \supp_{\hh}(w \cdot u)$, a contradiction.
\end{proof}

For a subset $E\subseteq G$, let
$$E^{\sharp} = \{(s,t)\in G\times G : st\in E\}.$$
The proof of the following lemma is immediate and we omit it.

\begin{lemma}\label{l:sharp-sets}
Let $E$ be a subset of $G$. Then $\overline{(E^\sharp)}=(\overline{E})^\sharp$ and $(E^c)^\sharp = (E^\sharp)^c$.
\end{lemma}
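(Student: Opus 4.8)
The plan is to realise the operation $E\mapsto E^{\sharp}$ as a pull-back along a homeomorphism of $G\times G$; once this is done, both assertions become formal properties of preimages.

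First I would introduce the map $\phi:G\times G\to G\times G$ defined by $\phi(s,t)=(st,t)$. Since multiplication and inversion are continuous on the topological group $G$, both $\phi$ and the map $(u,t)\mapsto(ut^{-1},t)$ are continuous, and the latter is a two-sided inverse of $\phi$; thus $\phi$ is a homeomorphism. Directly from the definition, $(s,t)\in E^{\sharp}$ if and only if $st\in E$, that is, if and only if $\phi(s,t)\in E\times G$. Hence $E^{\sharp}=\phi^{-1}(E\times G)$.

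The complement identity is then immediate: since preimages commute with complements and $(E\times G)^{c}=E^{c}\times G$ inside $G\times G$, we get $(E^{c})^{\sharp}=\phi^{-1}(E^{c}\times G)=\phi^{-1}\bigl((E\times G)^{c}\bigr)=\bigl(\phi^{-1}(E\times G)\bigr)^{c}=(E^{\sharp})^{c}$. (Equivalently, one argues pointwise: $(s,t)\in(E^{c})^{\sharp}$ iff $st\notin E$ iff $(s,t)\notin E^{\sharp}$.)

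For the closure identity I would use that a homeomorphism carries closures to closures: $\overline{\phi^{-1}(S)}=\phi^{-1}(\overline{S})$ for every $S\subseteq G\times G$. Taking $S=E\times G$ and noting $\overline{E\times G}=\overline{E}\times G$, this yields $\overline{(E^{\sharp})}=\overline{\phi^{-1}(E\times G)}=\phi^{-1}(\overline{E}\times G)=(\overline{E})^{\sharp}$. There is no genuine obstacle here; the only subtlety worth flagging is that one really needs $\phi$ to be a \emph{homeomorphism} (equivalently, that $(s,t)\mapsto st$ is continuous, surjective and open), since continuity alone gives only the inclusion $\overline{(E^{\sharp})}\subseteq(\overline{E})^{\sharp}$, and the reverse inclusion — the non-trivial direction — is what the homeomorphism property supplies.
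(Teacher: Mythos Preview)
Your proof is correct. The paper simply omits the proof as ``immediate'', so there is no argument to compare against; your realisation of $E\mapsto E^{\sharp}$ as the preimage under the homeomorphism $(s,t)\mapsto(st,t)$ is a clean and entirely valid way to make that immediacy explicit.
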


If $w : G\times G\to \bb{C}$ and
$s,t\in G$, we let $w_{s,t} : G\to \bb{C}$ be the function
given by $w_{s,t}(r) = w(sr,r^{-1}t)$.
For $x\in G$, we let $w_x : G\times G\to \bb{C}$ be the function given by
$w_x(s,t) := w_{s,t}(x) = w(sx, x^{-1}t)$, and
$\hat{w}_x : G\to \bb{C}$ be the function given by $\hat{w}_x(t) := w_x(e,t)=w(x,x^{-1}t)$.

\begin{lemma}\label{l_newinde}
Let $w\in \ahg$. Then

(i) \ \ for each $r\in G$, the function $w_r$ belongs to $\ahg$ and $\|w_r\|_{\hh} = \|w\|_{\hh}$;

(ii) \ the map $r\to w_r$ from $G$ into $\ahg$ is continuous;

(iii) for each $r\in G$, the function $\hat{w}_r$ belongs to $A(G)$ and $\|\hat{w}_r\|_A \leq \|w\|_{\hh}$;

(iv) \ the map $r\to \hat{w}_r$ from $G$ into $A(G)$ is  continuous.
\end{lemma}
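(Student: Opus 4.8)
The plan is to reduce everything to the case of an elementary tensor $w=\phi\otimes\psi$ with $\phi,\psi\in A(G)$, where all four statements can be verified by hand, and then to propagate the estimates and continuity to general $w\in\ahg$ by density and the completeness of the relevant norms. For part (i), I first observe that the substitution $(s,t)\mapsto(sr,r^{-1}t)$ amounts to precomposing each variable with a translation, so for $w=\phi\otimes\psi$ we have $w_r=(\rho_{r}\phi)\otimes(\lambda_{r}\psi)$ (right/left translates), and since translation is an isometric automorphism of $A(G)$ this gives $w_r=\phi(\cdot r)\otimes\psi(r^{-1}\cdot)\in A(G)\odot A(G)$ with Haagerup norm unchanged. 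Concretely, the map $\Theta_r:A(G)\odot A(G)\to A(G)\odot A(G)$ sending $\phi\otimes\psi$ to $\phi(\cdot r)\otimes\psi(r^{-1}\cdot)$ is (up to the completely isometric translation automorphisms of $A(G)$ on each leg) of the form $\alpha_r\otimes\beta_r$ with $\alpha_r,\beta_r$ complete isometries, hence extends to a complete isometry of $\ahg$; by density $w_r=\Theta_r(w)\in\ahg$ and $\|w_r\|_{\hh}=\|w\|_{\hh}$ for every $w$.

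For part (ii), continuity of $r\mapsto w_r$, I would again first check it on elementary tensors: $r\mapsto\phi(\cdot r)$ is continuous from $G$ into $A(G)$ (this is the standard continuity of translation on $A(G)$, cf.\ \cite{eymard}), and likewise $r\mapsto\psi(r^{-1}\cdot)$; since multiplication of the tensor legs into $\ahg$ is jointly continuous (boundedly), $r\mapsto w_r$ is continuous for $w\in A(G)\odot A(G)$. For general $w$, given $\epsilon>0$ pick $w_0\in A(G)\odot A(G)$ with $\|w-w_0\|_{\hh}<\epsilon/3$; then $\|w_r-w_{r'}\|_{\hh}\le\|(w-w_0)_r\|_{\hh}+\|(w_0)_r-(w_0)_{r'}\|_{\hh}+\|(w_0-w)_{r'}\|_{\hh}$, and the outer two terms are each $<\epsilon/3$ by the isometry in (i) while the middle term is $<\epsilon/3$ for $r'$ near $r$ by the elementary-tensor case. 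This is the standard $3\epsilon$-argument; no obstacle.

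For (iii) and (iv): the function $\hat w_r(t)=w(r,r^{-1}t)$ is obtained from $w_r$ by the map $w_r\mapsto(\id\otimes\delta_{?})$ — more precisely $\hat w_r(t)=w_r(e,t)$, i.e.\ $\hat w_r=(\delta_e\otimes\id)(w_r)$ where $\delta_e:A(G)\to\CC$ is evaluation at the identity, a completely contractive character. Since $\delta_e\otimes\id:\ahg\to A(G)$ is a complete contraction (it is the linearisation of $\phi\otimes\psi\mapsto\phi(e)\psi$, bounded by $\|\delta_e\|\cdot 1$ on the Haagerup tensor product because a character on one leg induces a contraction $A(G)\otimes_{\hh}A(G)\to A(G)$), we get $\hat w_r\in A(G)$ with $\|\hat w_r\|_A\le\|w_r\|_{\hh}=\|w\|_{\hh}$, proving (iii). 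For (iv), $\hat w_r=(\delta_e\otimes\id)(w_r)$ is the composition of the continuous map $r\mapsto w_r$ from (ii) with the fixed bounded map $\delta_e\otimes\id$, hence continuous. The main point requiring care — and the only genuine obstacle — is the first reduction in (i): one must justify that the "twist" $(s,t)\mapsto(sr,r^{-1}t)$, which mixes the two variables only through the shared parameter $r$, really acts as a (complete) isometry on the Haagerup tensor product rather than on $A(G\times G)$; this works precisely because for each fixed $r$ it factors as the independent translations $\alpha_r\otimes\beta_r$ on the two legs, and the Haagerup tensor product is functorial for completely bounded (here completely isometric) maps on each factor. Once that is in place the rest is routine density and continuity bookkeeping.
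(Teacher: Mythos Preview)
Your proof is correct and follows essentially the same strategy as the paper's: both identify $w_r$ as $(R_r\otimes L_r)(w)$ for the translation complete isometries $R_r,L_r$ on $A(G)$, invoke functoriality of $\otimes_{\hh}$ for (i), and use continuity of translations plus a density/$3\epsilon$ argument for (ii). For (iii) your factorisation $\hat w_r=(\delta_e\otimes\id)(w_r)$ is equivalent to the paper's $\hat w_r=(\delta_r\otimes L_r)(w)$, since $\delta_e\circ R_r=\delta_r$.

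The one genuine (minor) difference is in (iv): the paper argues directly on elementary tensors --- writing out $\hat w_r-\hat w_s$ for $w=\phi\otimes\psi$, estimating via continuity of $\phi$ and of $r\mapsto L_r(\psi)$, and then running a separate $3\epsilon$ density argument --- whereas you observe that $\hat w_r=(\delta_e\otimes\id)(w_r)$ is the composition of the continuous map from (ii) with a \emph{fixed} bounded linear map, so continuity is immediate. Your route is shorter and avoids repeating the density argument; the paper's route is more self-contained in that it does not rely on having already proved (ii). Both are fine.
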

\begin{proof}
Fix, throughout the proof, $w\in \ahg$.
For $r\in G$ and $\phi\in A(G)$, let
$L_r, R_r : A(G)\to A(G)$ be the operators given by
$L_r(\phi)(t) = \phi(r^{-1}t)$ and $R_r(\phi)(s) = \phi(sr)$.
It is well-known that
$L_r$ and $R_r$ are complete isometries;
thus, the operator
$R_r\otimes L_r : \ahg\to \ahg$ is a (complete) isometry.

(i), (ii) Note that
\begin{equation}\label{eq_wrlrrr}
(R_r\otimes L_r)(w) = w_r.
\end{equation}
Indeed, this identity is straightforward
if $w$ is an element of the algebraic tensor product $A(G)\odot A(G)$, 
and, by the density of $A(G)\odot A(G)$ in $\ahg$ and the fact that $\|\cdot\|_{\hh}$
dominates the uniform norm, it holds for an arbitrary $w\in \ahg$.

It is easy to see that the maps
$ r\mapsto R_r(\phi)$ and $r \mapsto L_r(\phi)$ from $G$ into $A(G)$ are continuous, for every $\phi \in A(G)$.
By (\ref{eq_wrlrrr}), the map $r \mapsto w_r$ from $G$ into $\ahg$ is continuous,
and $\|w_r\|_{\hh} = \|w\|_{\hh}$ for every $r\in G$.

(iii)
Similarly to (\ref{eq_wrlrrr}), one can show that
$\hat{w}_r = (\delta_r\otimes L_r)(w)$ where $\delta_r$ denotes the evaluation at $r$.
The map $\delta_r\otimes L_r : \ahg \to A(G)$ is completely contractive;
it follows that $\hat{w}_r \in A(G)$ and
$\|\hat{w}_r\|_A\leq \|w\|_{\hh}$, $r\in G$.

(iv) Fix $s\in G$ and $\epsilon > 0$.
Assume that $w = \phi\otimes \psi$, where $\phi, \psi\in A(G)$
and $\psi(t)= (\lambda_t \xi, \eta)$, $t\in G$, for some $\xi,\eta \in L^2(G)$.
\begin{eqnarray*}
\hat{w}_r(t) - \hat{w}_s(t) & = & (\phi(r) - \phi(s)) \psi(r^{-1}t) + \phi(s) (\psi(r^{-1}t) - \psi(s^{-1}t))\\
& = & (\phi(r) - \phi(s)) L_r(\psi)(t) + \phi(s) (L_r(\psi)(t) - L_s(\psi)(t)),
\end{eqnarray*}
for all $t\in G$; thus,
\begin{equation}\label{eq_vrvs}
\hat{w}_r - \hat{w}_s = (\phi(r) - \phi(s)) L_r(\psi) + \phi(s) (L_r(\psi) - L_s(\psi)).
\end{equation}
Let $V_s$ be a neighbourhood of $s$ such that
$|\phi(r) - \phi(s)| < \epsilon$ and $\|\lambda_r \eta - \lambda_s \eta\|_2 < \epsilon$ for all $r \in V_s$.
By (\ref{eq_vrvs}),
\begin{eqnarray*}
\|\hat{w}_r - \hat{w}_s\|_{A} & \leq & |\phi(r)- \phi(s)| \|\psi\|_A + |\phi(s)| \|\xi\|_2 \| (\lambda_r - \lambda_s) \eta\|_2\\
& < & \epsilon \|\psi\|_{A} + \epsilon |\phi(s)| \|\xi\|_2,
\end{eqnarray*}
for all $r \in V_s$.
It follows that, if $w =\sum_{i=1}^n \phi_i \otimes \psi_i$ for $\phi_i, \psi_i \in A(G)$, $i=1, \ldots, n$,
then there exists a neighbourhood $V_s$ of $s$, so that $\|\hat{w}_r - \hat{w}_s\|_A < \epsilon$ for every $r\in V_s$.

Finally, if $w\in \ahg$ is arbitrary, let $v\in A(G)\odot A(G)$ be such that
$\|w - v\|_{\hh} < \epsilon/3$. Let $V_s$ be a neighbourhood of $s$ such that
$\|\hat{v}_r - \hat{v}_s\|_A < \epsilon$ for every $r\in V_s$.
Then, for every $r\in V_s$, using (iii) we have
\begin{eqnarray*}
\|\hat{w}_r - \hat{w}_s\|_A
& \leq &
\|\hat{w}_r - \hat{v}_r\|_A + \|\hat{v}_r - \hat{v}_s\|_A + \|\hat{v}_s - \hat{w}_s\|_A\\
& \leq &
2\|w - v\|_{\hh} + \|\hat{v}_r - \hat{v}_s\|_A < \epsilon.
\end{eqnarray*}
Thus, the map $r \mapsto \hat{w}_r$ is continuous.
\end{proof}

\begin{lemma}\label{l_Gamma}
Let $a\in A(G)$ be a compactly supported function and
$\cl V_a = \overline{aA(G) \odot  aA(G)}^{\|\cdot\|_{\hh}}$.

(i) \ \ If $v\in \cl V_a$ then the function
$s\to v(s,s^{-1}t)$ is integrable for each $t\in G$.

(ii) \ For $v\in \cl V_a$, the function $\Gamma_a(v) : G\to \bb{C}$ given by
\[
\Gamma_a(v)(t) = \int_G v(s,s^{-1}t)ds, \ \ \ \ t\in G,
\]
is compactly supported and belongs to $A(G)$.

(iii) The map $\Gamma_a : \cl V_a\to A(G)$ is bounded and $\|\Gamma_a\|\leq |\supp(a)|$.
\end{lemma}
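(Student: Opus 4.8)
The plan is to reduce everything to elementary tensors and then extend by density, using the crucial fact that $a$ has compact support so that all the integrals in sight are over a fixed compact set. First I would treat the case $v = a\phi \otimes a\psi$ with $\phi,\psi \in A(G)$. Writing $\phi$ and $\psi$ as matrix coefficients of the left regular representation, say $(a\phi)(s) = (\lambda_s \xi_1,\eta_1)$ and $(a\psi)(s) = (\lambda_s \xi_2,\eta_2)$ for suitable $\xi_i,\eta_i \in L^2(G)$, I would compute
\[
v(s,s^{-1}t) = (a\phi)(s)\,(a\psi)(s^{-1}t) = (\lambda_s\xi_1,\eta_1)(\lambda_{s^{-1}t}\xi_2,\eta_2).
\]
Since $a\phi$ is supported in the compact set $K := \supp(a)$, the $s$-integrand vanishes off $K$, so integrability in (i) is automatic for elementary tensors; the general case in (i) follows because $\|\cdot\|_{\hh}$ dominates the uniform norm, so $|v(s,s^{-1}t)| \le \|v\|_{\hh}$ on the compact set $K$ (elements of $\cl V_a$ are limits in $\|\cdot\|_{\hh}$, hence uniformly, of functions supported in $K\times K$, so they themselves vanish for $s\notin K$).

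For (ii) and (iii), the key computation is to identify $\Gamma_a(v)$ as a matrix coefficient of $\lambda$. Using the substitution and the fact that $\lambda_{s^{-1}t} = \lambda_s^* \lambda_t$, I would rewrite
\[
\Gamma_a(v)(t) = \int_K (\lambda_s\xi_1,\eta_1)(\lambda_s^*\lambda_t\xi_2,\eta_2)\,ds
= \int_K (\lambda_s\xi_1,\eta_1)(\lambda_t\xi_2,\lambda_s\eta_2)\,ds.
\]
This is of the form $(\lambda_t\xi_2,\zeta)$ where $\zeta = \int_K (\lambda_s\xi_1,\eta_1)\,\lambda_s\eta_2\,ds \in L^2(G)$ is a well-defined Bochner integral (the integrand is a continuous $L^2$-valued function on the compact set $K$), so $\Gamma_a(v) \in A(G)$ for elementary tensors, and moreover the operator $T$ on $L^2(G)$ given by $T\eta = \int_K (\lambda_s\xi_1,\eta_1)\lambda_s\eta\,ds$ allows one to write $\Gamma_a(v)(t) = (\lambda_t\xi_2, T^*\eta_2)$, or alternatively to realise $\Gamma_a$ directly as a slice-type map. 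The norm estimate $\|\Gamma_a(v)\|_A \le |K|\,\|v\|_{\hh}$ should come out of this by estimating $\|\zeta\|_2 \le \int_K \|\xi_1\|_2\,\|\eta_1\|_2\,ds = |K|\,\|\xi_1\|_2\|\eta_1\|_2$ and then optimising over the representations of $v$; for a general $v\in \cl V_a$ one writes a $w^*$-representation $v = A\odot B$ with $A,B$ rows/columns over $aA(G)$ and runs the same argument with series in place of finite sums, the convergence being controlled by $\|v\|_{\hh}$ and compact support. Compact support of $\Gamma_a(v)$: if $t\notin K\cdot K$ then for every $s\in K$ one has $s^{-1}t\notin K$, so $v(s,s^{-1}t)=0$ and the integral vanishes; hence $\supp(\Gamma_a(v))\subseteq KK$, which is compact.

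The main obstacle I expect is making the passage from elementary tensors to general $v \in \cl V_a$ genuinely rigorous at the level of the Haagerup norm: one must verify that the assignment $v \mapsto \Gamma_a(v)$ is well-defined and bounded on $\cl V_a$ rather than merely on the algebraic tensor product, and the clean way to do this is to first establish the norm inequality $\|\Gamma_a(v)\|_A \le |K|\,\|v\|_{\hh}$ on $aA(G)\odot aA(G)$ (so that $\Gamma_a$ extends continuously to the closure $\cl V_a$) and only afterwards check — using the uniform-convergence consequence of $\|\cdot\|_{\hh}$-convergence on the compact set $K\times K$ — that this continuous extension really is given by the pointwise integral formula. The norm bound itself is the delicate point, since the naive estimate must be taken over all representations of $v$ and reconciled with the definition of $\|v\|_{\hh}$; identifying $\Gamma_a$ with a composition of a slice map and the restriction of multiplication, or realising it via the operator $T$ above, is what makes the constant come out as exactly $|\supp(a)|$.
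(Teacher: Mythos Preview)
Your outline is sound on parts (i) and the compact-support claim in (ii), and you correctly locate the delicate point (the Haagerup norm bound). But your route and the paper's diverge exactly there, and your version has a gap you should be aware of.

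The paper does \emph{not} reduce to elementary tensors. Instead it invokes the preceding Lemma~\ref{l_newinde}: for every $v\in\ahg$ the slice $\hat v_s(t)=v(s,s^{-1}t)$ equals $(\delta_s\otimes L_s)(v)$, where $\delta_s\otimes L_s:\ahg\to A(G)$ is completely contractive, so $\hat v_s\in A(G)$ with $\|\hat v_s\|_A\le\|v\|_{\hh}$, and $s\mapsto\hat v_s$ is norm continuous. Since $\hat v_s=0$ for $s\notin F:=\supp(a)$, the Bochner integral $\Gamma'(v)=\int_G\hat v_s\,ds$ is a well-defined element of $A(G)$ with $\|\Gamma'(v)\|_A\le|F|\,\|v\|_{\hh}$, and evaluating at $\lambda_t$ shows $\Gamma'(v)=\Gamma_a(v)$. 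This handles (ii) and (iii) for all $v\in\cl V_a$ simultaneously, with no density argument needed.

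Your matrix-coefficient computation gives, for an elementary tensor, $\|\Gamma_a(f\otimes g)\|_A\le|K|\,\|f\|_A\|g\|_A$; but for a finite sum $v=\sum_i f_i\otimes g_i$ the same estimate only yields $\|\Gamma_a(v)\|_A\le|K|\sum_i\|f_i\|_A\|g_i\|_A$, which after optimising gives the \emph{projective} tensor norm of $v$, not the Haagerup norm. Passing to ``$w^*$-representations $v=A\odot B$'' does not fix this unless you explain how the row and column operator-space norms of $A$ and $B$ enter the estimate, and the clean way to do that is precisely the slice-map identity $\hat v_s=(\delta_s\otimes L_s)(v)$ --- which is the paper's argument. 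So your anticipated obstacle is real, and its resolution is Lemma~\ref{l_newinde} rather than any further manipulation of matrix coefficients.
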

\begin{proof}
Set $F = \supp(a)$. Note that, by the
injectivity of the Haagerup tensor product, there is a natural completely
isometric identification $\cl V_a \equiv \overline{aA(G)}\otimes_{\hh} \overline{aA(G)}$.

(i) For $v\in \cl V_a$ and $t\in G$, the function
$s\to v(s,s^{-1}t)$ 
is continuous and supported on the compact set $F\cap tF^{-1}$; it is hence integrable.

(ii), (iii) Let $v\in \cl V_a$.
Since $v$ is supported on $F\times F$, we have that $\hat{v}_s = 0$ if $s\not\in F$.
By Lemma \ref{l_newinde} (iv), the function $s\to \hat{v}_s$ from $G$ into $A(G)$ is continuous.
It follows that the integral
$$\Gamma'(v) = \int_G \hat{v}_s ds$$ is
well-defined in the Bochner sense. By Lemma \ref{l_newinde} (iii),
$\|\Gamma'(v)\|_{A} \leq |F|\|v\|_{\hh}$, and hence $\Gamma'$ is a  bounded map
on $\cl V_a$ with norm not exceeding $|F|$.
If $t\in G$ then
$$\Gamma'(v)(t)
= \left\langle \int_G \hat{v}_s ds, \lambda_t\right\rangle
= \int_G \langle  \hat{v}_s, \lambda_t\rangle ds
= \int_G v(s,s^{-1}t)ds = \Gamma_a(v)(t);$$
thus, $\Gamma' = \Gamma_a$. In particuar, $\Gamma_a$ takes values in $A(G)$ and
is bounded with $\|\Gamma_a\|\leq |F|$.
In addition, $\Gamma_a(v)(t) = 0$ whenever $t\not\in FF$, and so the function $\Gamma_a(v)$ is
compactly supported. 
\end{proof}

\begin{lemma}\label{l_Gammaj}
Let $E\subseteq G$ be a closed set and $a\in A(G)$ be compactly
supported. 
Suppose that $v\in \cl V_a$ has compact support disjoint from $E^{\sharp}$.
Then $\Gamma_a(v)\in  J_{A(G)}(E)$.
\end{lemma}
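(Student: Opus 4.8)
The goal is to show that $\Gamma_a(v) \in J_{A(G)}(E)$, i.e. that $\Gamma_a(v)$ lies in the closure of the set of elements of $A(G)$ with compact support disjoint from $E$. Since $\Gamma_a(v)$ is already compactly supported (by Lemma~\ref{l_Gamma}(ii), (iii)), the natural strategy is to show directly that $\supp(\Gamma_a(v))$ is disjoint from $E$; then $\Gamma_a(v)$ itself is one of the generating elements of $J_{A(G)}(E)$, and we are done. However, the support of $v$ being disjoint from $E^{\sharp}$ is a closed-set condition, and the pointwise formula $\Gamma_a(v)(t) = \int_G v(s, s^{-1}t)\, ds$ only immediately gives that $\Gamma_a(v)(t) = 0$ when the whole fibre $\{(s,s^{-1}t) : s\in G\}$ avoids $\supp(v)$. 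So the first thing I would do is observe: the fibre over $t$ is $\{(s, s^{-1}t) : s \in G\}$, whose product of coordinates is always $t$; hence this fibre is contained in $\{t\}^{\sharp} = \{(r_1,r_2) : r_1 r_2 = t\}$. If $t \in E$, then $\{t\}^{\sharp} \subseteq E^{\sharp}$, so the fibre over $t$ is entirely inside $E^{\sharp}$, hence (since $\supp(v) \cap E^{\sharp} = \emptyset$) entirely outside $\supp(v)$, whence $\Gamma_a(v)(t) = 0$. Thus $\Gamma_a(v)$ vanishes on $E$.

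**Upgrading "vanishes on $E$" to "support disjoint from $E$".**
Vanishing on $E$ gives $\Gamma_a(v) \in I_{A(G)}(E)$, which is weaker than what we want. The point is to use that $\supp(v)$ is \emph{compact} and disjoint from the \emph{closed} set $E^{\sharp}$, so there is positive "room". Concretely: let $K = \supp(v)$, a compact subset of $G\times G$ disjoint from the closed set $E^{\sharp}$. Consider the continuous multiplication map $p : G\times G \to G$, $p(s,t) = st$. Then $p(K)$ is compact, and I claim $p(K) \cap E = \emptyset$: if $st \in E$ for some $(s,t)\in K$, then $(s,t) \in E^{\sharp}$, contradicting $K \cap E^{\sharp} = \emptyset$. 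Moreover $p(K)$ is closed (being compact), so $G \setminus p(K)$ is an open set containing $E$. Now I claim $\supp(\Gamma_a(v)) \subseteq p(K) = p(\supp(v))$: indeed, if $t \notin p(\supp(v))$, then for every $s\in G$ the point $(s, s^{-1}t)$ has product $t \notin p(\supp(v))$, so $(s,s^{-1}t) \notin \supp(v)$; thus $v$ vanishes on the entire fibre over $t$, and moreover — using continuity of $v$ and compactness — $v$ vanishes on a whole neighbourhood of that fibre, so $\Gamma_a(v)$ vanishes on a neighbourhood of $t$. Hence $\supp(\Gamma_a(v)) \subseteq p(\supp(v))$, which is a compact set disjoint from $E$.

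**Conclusion.**
Combining the above: $\Gamma_a(v) \in A(G)$ by Lemma~\ref{l_Gamma}(ii), it is compactly supported, and its support $\supp(\Gamma_a(v)) \subseteq p(\supp(v))$ is disjoint from $E$. Therefore $\Gamma_a(v)$ is an element of $A(G)$ with compact support disjoint from $E$, and in particular $\Gamma_a(v) \in J_{A(G)}(E)$ (no closure even needed, since $\Gamma_a(v)$ is itself a generator).

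**Main obstacle.**
The one step that needs genuine care, rather than set-theoretic bookkeeping, is the claim that $\Gamma_a(v)$ vanishes on a \emph{neighbourhood} of each $t \notin p(\supp(v))$ — equivalently, that $\supp(\Gamma_a(v)) \subseteq p(\supp(v))$ rather than merely $\{t : \Gamma_a(v)(t) \neq 0\} \subseteq p(\supp(v))$. This follows because for $t \notin p(\supp(v))$ the compact fibre $\{s\} \times \{s^{-1}t\}$ lies in the open complement of $\supp(v)$; a standard tube-lemma / uniform-continuity argument (using that $\supp(v) \subseteq F \times F$ with $F = \supp(a)$ compact, so that only $s$ in the compact set $F \cap tF^{-1}$ contribute) then gives a neighbourhood $W$ of $t$ with $v(s, s^{-1}t') = 0$ for all $s$ and all $t'\in W$. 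Alternatively one can avoid this by simply noting $\{t : \Gamma_a(v)(t)\neq 0\}$ has closure inside the compact, hence closed, set $p(\supp(v))$, which already suffices since $p(\supp(v))\cap E=\emptyset$. Either way this is routine, so I expect no real difficulty; the lemma is essentially a change-of-variables observation packaged with the elementary identity $p(\{t\}^{\sharp}) = \{t\}$.
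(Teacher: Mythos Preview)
Your proof is correct and follows essentially the same approach as the paper: both arguments hinge on the observation that the image $p(\supp(v))$ of $\supp(v)$ under the multiplication map $p(s,t)=st$ is compact and disjoint from $E$, and that $\Gamma_a(v)$ vanishes outside this set. The paper's write-up inserts an auxiliary open set $W$ with $p(\supp(v))\subseteq W\subseteq\overline{W}\subseteq E^c$ before concluding, while your ``alternative'' route---noting that $\{t:\Gamma_a(v)(t)\neq 0\}\subseteq p(\supp(v))$ and that $p(\supp(v))$ is already closed---is slightly more direct and makes the tube-lemma discussion in your ``main obstacle'' paragraph unnecessary.
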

\begin{proof}
Suppose that $v\in \cl V_a$ has compact support disjoint from $E^{\sharp}$ and set $F = \supp(v)$.
By Lemma \ref{l_Gamma}, $\Gamma_a(v)$ is compactly supported. 
By the continuity of the group multiplication,
the set $\tilde{F}\stackrel{def}{=}\{xy: (x,y) \in F\}$ is compact;
moreover, $\tilde{F} \cap E = \emptyset$.
Therefore, there exists an open subset $W\subseteq G$ such that $\overline{W}$ is compact and
$\tilde{F} \subseteq W \subseteq  \overline{W} \subseteq E^c$.
Again by the continuity of the multiplication,
$W^\sharp$ is an open subset of $G\times G$. Clearly,
\[
F \subseteq \tilde{F}^\sharp \subseteq W^\sharp \subseteq \overline{W}^\sharp.
\]
Now let $t\in \overline{W}^c$; then
$(s,s^{-1}t) \in (\overline{W}^c)^\sharp$ for each $s\in G$.
By Lemma \ref{l:sharp-sets}, $(s,s^{-1}t) \in (\overline{W}^\sharp)^c$ and so
$(s,s^{-1}t) \notin F$, $s\in G$.
Therefore,
\[
\Gamma_a(v)(t) = \int_{G} v(s,s^{-1}t) ds =0.
\]
It follows that $\Gamma_a(v)$ has compact support (within  $\overline{W}$) disjoint from $E$, and hence
$\Gamma_a(v)\in J_{A(G)}(E)$.
\end{proof}

\begin{theorem}\label{t_spectral-of-E-sharp}
Let $E\subseteq G$ be a closed set.
If $E^\sharp$ is a set of  spectral synthesis for $\ahg$ then $E$ is a set of local spectral synthesis for $A(G)$.
Moreover, if $A(G)$ has a (possibly unbounded) approximate identity then $E$ is a set of spectral synthesis.
\end{theorem}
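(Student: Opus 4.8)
The plan is to show that $\overline{I^c_{A(G)}(E)} = J_{A(G)}(E)$ (the defining condition for local spectral synthesis), and then, under the extra hypothesis, upgrade to $I_{A(G)}(E) = J_{A(G)}(E)$. Since $J_{A(G)}(E) \subseteq \overline{I^c_{A(G)}(E)}$ always holds, the content is the reverse inclusion: given $\phi \in A(G)$ with compact support $K$ and $\phi|_E = 0$, approximate $\phi$ in $\|\cdot\|_A$ by elements of $J_{A(G)}(E)$. The idea is to realise $\phi$ as $\Gamma_a(v)$ for a suitable compactly supported $a \in A(G)$ and $v \in \cl V_a$ with $v|_{E^\sharp} = 0$, then exploit spectral synthesis of $E^\sharp$ in $\ahg$ together with the boundedness of $\Gamma_a$ (Lemma \ref{l_Gamma}) and Lemma \ref{l_Gammaj}.

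\textbf{Constructing $v$.} First I would use Theorem \ref{th_daws}: the map $m_* : A(G) \to M^{\cb}\ahg$ is completely contractive and satisfies $m_*(\phi)(s,t) = \phi(st)$. Thus $m_*(\phi)$ vanishes precisely on $E^\sharp$ whenever $\phi|_E = 0$. To pass from $M^{\cb}\ahg$ into $\ahg$ proper (so that $\Gamma_a$ applies), multiply by a compactly supported ``bump'' in $\ahg$: choose $a \in A(G) \cap C_c(G)$ equal to $1$ on a neighbourhood of $K \cup K^{-1}$ (using that $A(G)\cap C_c(G)$ is dense in $A(G)$ and that one can find such plateau functions on a neighbourhood of any compact set), and set $v = (a \otimes a)\, m_*(\phi) \in \cl V_a$. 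Since $m_*(\phi)$ vanishes on $E^\sharp$, so does $v$; and $v$ has compact support. Now compute $\Gamma_a(v)(t) = \int_G v(s, s^{-1}t)\,ds = \int_G a(s)a(s^{-1}t)\phi(t)\,ds = \phi(t)\big(\int_G a(s)a(s^{-1}t)\,ds\big) = \phi(t)\,(a * \check a)(t)$ (up to the appropriate normalization of the convolution), and for $t \in KK = \{t : \phi(t)\neq 0 \text{ possibly}\}$ one arranges $a$ so that $(a*\check a)(t) = 1$ there (refining the plateau argument: take $a$ so that $s \mapsto a(s)a(s^{-1}t)$ has total integral $1$ whenever $t\in K$). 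Hence $\Gamma_a(v) = \phi$.

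\textbf{Approximation.} Since $E^\sharp$ is a set of spectral synthesis for $\ahg$, $I_{\ahg}(E^\sharp) = J_{\ahg}(E^\sharp)$; combined with $v \in I_{\ahg}(E^\sharp) \cap \cl V_a$, one approximates $v$ in $\|\cdot\|_\hh$ by elements $v_n \in \ahg$ of compact support disjoint from $E^\sharp$ — and since multiplying by $a\otimes a$ is bounded on $\ahg$ and fixes $v$, one may assume $v_n \in \cl V_a$ (replace $v_n$ by $(a\otimes a)v_n$, which stays compactly supported off $E^\sharp$ and still converges to $v$). By Lemma \ref{l_Gamma}(iii), $\|\Gamma_a(v_n) - \phi\|_A = \|\Gamma_a(v_n - v)\|_A \le |\supp(a)|\,\|v_n - v\|_\hh \to 0$; and by Lemma \ref{l_Gammaj}, each $\Gamma_a(v_n) \in J_{A(G)}(E)$. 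Thus $\phi \in J_{A(G)}(E)$, giving $I^c_{A(G)}(E) \subseteq J_{A(G)}(E)$ and hence $\overline{I^c_{A(G)}(E)} = J_{A(G)}(E)$, i.e. $E$ is a set of local spectral synthesis. For the ``moreover'': if $A(G)$ has an approximate identity $(e_\alpha)$, then for any $\psi \in I_{A(G)}(E)$ the products $e_\alpha \psi$ — or rather, truncations arising from a compactly supported approximate identity, or convolution with such — lie in $I^c_{A(G)}(E)$ (compact support from the approximate unit, vanishing on $E$ since $\psi$ does) and converge to $\psi$ in $A(G)$; hence $I_{A(G)}(E) = \overline{I^c_{A(G)}(E)} = J_{A(G)}(E)$.

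\textbf{Main obstacle.} The delicate point is the normalization making $\Gamma_a(v) = \phi$ exactly (rather than $\phi$ times a nonvanishing factor on $\supp\phi$) — one must choose the plateau function $a$ so that $\int_G a(s)a(s^{-1}t)\,ds = 1$ for all $t$ in a neighbourhood of $\supp\phi$, which requires a careful two-step choice of $a$ and an honest verification that such $a$ exists in $A(G)\cap C_c(G)$; alternatively, if exact equality is awkward, one divides by the factor on the relevant compact set (it is bounded below there) and checks the quotient is still in $A(G)$. A secondary subtlety is ensuring the approximants $v_n$ can be taken inside $\cl V_a$ while keeping their supports off $E^\sharp$; multiplying by $a\otimes a$ handles this but one should confirm support behaviour under this multiplication.
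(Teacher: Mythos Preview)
Your proposal uses the same core machinery as the paper --- the map $m_*$ from Theorem~\ref{th_daws}, the averaging operator $\Gamma_a$, and Lemmas~\ref{l_Gamma} and~\ref{l_Gammaj} --- but the endgame is genuinely different, and the normalization obstacle you flag is a real gap.

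The paper does \emph{not} try to realise $\phi$ as $\Gamma_a(v)$ for a single $v$. Instead it works by duality: for \emph{arbitrary} compactly supported $v \in \ahg$ it observes $\Gamma_{a_K}(m_*(\phi)v) = \phi\,\Gamma_{a_K}(v)$, runs the synthesis-and-$\Gamma$ argument to get $\langle \phi\cdot T,\,\Gamma_{a_K}(v)\rangle = 0$ for every $T\in J_{A(G)}(E)^\perp$, and then shows (via $v = e_\alpha\otimes b$ with $(e_\alpha)$ an $L^1$-approximate identity) that the functions $\Gamma_{a_K}(v)$ are dense among compactly supported elements of $A(G)$. This yields $\phi\cdot T = 0$, from which both conclusions follow by pairing against a compactly supported $a$ with $a\phi=\phi$ (local synthesis) or against an approximate identity (full synthesis). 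The point is that the paper never needs $\Gamma_a(v)$ to equal $\phi$ on the nose.

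Your route can be salvaged, but not via your option (i): arranging $\int_G a(s)a(s^{-1}t)\,ds \equiv 1$ on a prescribed compact set is not something you can do by a plateau choice (convolution squares of plateau functions are tent-shaped, not flat). Option (ii) works, but you must check that $g := a*a$ lies in $A(G)$ (it does, since $a\in C_c(G)\subseteq L^2(G)$ gives $g$ as a matrix coefficient of $\lambda$), that $g>0$ on a neighbourhood of $\supp\phi$, and then produce $h\in A(G)\cap C_c(G)$ with $gh=1$ there (local inversion in a regular semisimple Banach algebra), so that $\phi = (\phi g)h \in J_{A(G)}(E)$ because $J_{A(G)}(E)$ is an ideal. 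This is a legitimate alternative argument; it is just more work than the paper's duality trick.

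One further wrinkle in your ``moreover'': the hypothesised approximate identity $(e_\alpha)$ need not be compactly supported, so $e_\alpha\psi$ need not lie in $I^c_{A(G)}(E)$. You must first approximate each $e_\alpha$ by a compactly supported $c\in A(G)$ and use $c\psi$ instead; this two-step approximation is routine but should be said.
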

\begin{proof}
Let $\phi \in A(G)$ vanish on $E$.
Then $m_*(\phi)$ vanishes on $E^\sharp$.
By  Theorem~\ref{th_daws}, $m_*(\phi) \in M^{\rm cb}\ahg$.

Fix $v \in \ahg \cap C_c(G \times G)$ and let
$K\subseteq G$ be a compact subset such that $\supp(v) \subseteq K\times K$.
The element $m_*(\phi)v$ of $\ahg$ vanishes on $E^\sharp$;
since $E^\sharp$ is a set of spectral synthesis for $\ahg$, there
exists a sequence $(v_n)_{n\in \bb{N}}\subseteq \ahg$, whose elements have compact support disjoint from $E^\sharp$,
such that $v_n \rightarrow_{n\to\infty} m_*(\phi) v$.

Since $A(G)$ is a regular Banach algebra, there exists a
compactly supported  function $a_K \in A(G)$ which is equal to  $1$ on $K$.
Setting $a_{K \times K}= a_K \otimes a_K$,
we have $v  a_{K \times K} = v$.
After replacing $v_n$ by $v_n a_{K\times K}$ if necessary, we may assume that
$(v_n)_{n\in \bb{N}} \subseteq a_{K \times K} \ahg$.

Note that
\[
\overline{a_{K\times K} \ahg}^{\|\cdot\|_{\hh}}  = \overline{(a_K A(G)) \odot (a_KA(G))}^{\|\cdot\|_{\hh}};
\]
therefore $(v_n)_{n\in \bb{N}} \subseteq \cl V_{a_K}$ and hence $m_*(f) v\in \cl V_{a_K}$.
By Lemma \ref{l_Gamma}, for any $t\in G$ we have
\begin{eqnarray*}
\Gamma_{a_K}(m_*(\phi) v)(t) &=& \int_G m_*(\phi)(s,s^{-1}t) v(s, s^{-1}t) ds\\
 &=& \phi(t) \int_{G} v(s, s^{-1}t) ds = ( \phi \Gamma_{a_K}(v))(t).
\end{eqnarray*}
Hence
\[
\| \phi \Gamma_{a_K}(v) - \Gamma_{a_K}(v_n)\|_{A} \leq \|\Gamma_{a_K}\| \| m_*(\phi)v- v_n\|_{\hh}
\]
and therefore $\| \phi \Gamma_{a_K}(v) - \Gamma_{a_K}(v_n)\|_{A}\rightarrow_{n\to \infty} 0$.

By Lemma \ref{l_Gammaj},
$\Gamma_{a_K}(v_n)\in J_{A(G)}(E)$ for each $n\in \bb{N}$.
Fix $T \in J_{A(G)}(E)^\perp$. Then
\begin{equation}\label{eq_GaK}
\langle \phi \cdot T ,  \Gamma_{a_K}(v) \rangle = \langle T, \phi \Gamma_{a_K}(v)\rangle =
\lim_{n\to\infty} \langle T, \Gamma_{a_K}(v_n)\rangle = 0.
\end{equation}
Note that, by Lemma \ref{l_Gamma}, $\Gamma_{a_K}(v)$ is a compactly supported function from $A(G)$.

Let $(U_\alpha)_\alpha$ be a neighbourhood basis at $e$ consisting of
relatively compact neighbourhoods uniformly contained in a compact neighbourhood
of $e$ and  ordered inversely by the inclusion relation.
For each $\alpha$, let $b_\alpha$ be an element in $A(G) \cap C_c(G)$ so that
$\supp(b_\alpha) \subseteq U_\alpha$ and  $b_\alpha(e) =1$.
Set $e_\alpha = b_\alpha/ \|b_\alpha\|_1$. It is easy to check that $(e_\alpha)_\alpha$
is  a bounded approximate identity
of $L^1(G)$ in $A(G) \cap C_c(G)$,
all of whose elements are supported in a fixed compact neighbourhood of the identity.

Fix a compactly supported element $b$ of $A(G)$.
Then $(e_\alpha \otimes b)_{\alpha}$ $\subseteq$ $\ahg$ $\cap$ $C_c(G \times G)$, and we may assume that,
for a certain compact set $F \subseteq G$,
$\supp(e_\alpha \otimes b) \subseteq F \times F$,  for all $\alpha$.
By Lemma \ref{l_Gamma},
if $a_F\in A(G)$ is a compactly supported function taking value $1$ on $F$, then
$\Gamma_{a_F}(e_\alpha \otimes b) = e_{\alpha}\ast b \in A(G)$.
Moreover, by the continuity of the map $r\to L_r(b)$ from $G$ into $A(G)$, we have that
\[
f \ast b = \int_G f(r) L_r(b) \  dr, \ \ \ f \in L^1(G),
\]
where the integral is understood in the Bochner sense and is $A(G)$-valued.
We thus have
\begin{equation}\label{eq:using-bai-on-w}
\|\Gamma_{a_F}(e_\alpha\otimes b) - b\|_{A} = \| e_\alpha * b  - b\|_{A}  \leq \sup_{r \in U_\alpha} \|L_r(b) - b\|_A \rightarrow_\alpha \ 0.
\end{equation}
Now (\ref{eq_GaK}) implies that
$\langle \phi \cdot T ,  b \rangle = 0$, for all $b\in C_c(G) \cap A(G)$. Since the
compactly supported functions in $A(G)$ form a dense subset of $A(G)$, we
conclude that $\phi\cdot T = 0$.
If $A(G)$ has an approximate identity, say $(a_\alpha)_\alpha$, then
$$\langle T,\phi\rangle=\lim_\alpha\langle T,\phi a_\alpha\rangle=\lim_\alpha\langle \phi\cdot T,a_\alpha\rangle=0$$
and hence $E$ is a set of spectral synthesis.

Assume now that $\phi$ has compact support and
let $a \in A(G)\cap C_c(G)$ so that $a|_{\supp(\phi)}\equiv 1$; hence, $a\phi=\phi$.
Therefore,
\[
  \langle T, \phi \rangle=  \langle T, a\phi \rangle  = \langle  \phi \cdot T, a\rangle =0.
\]
Since this holds for  an arbitrary element $\phi$ of $I^c_{A(G)}(E)$,
we conclude that $T \in I_{A(G)}^c(E)^\perp$,
and thus $E$ is a set of local spectral synthesis for $A(G)$.
\end{proof}

Recall that a locally compact group $G$ is called a \emph{Moore group} \cite{moore}
if  each continuous irreducible unitary representation of $G$ is finite dimensional.
We refer the reader to \cite{palmer} for background on this class of groups. In the
sequel, we will use the fact that every Moore group is amenable and unimodular \cite[p.~1486]{palmer}.
Note that if $G$ is either virtually abelian 
(that is, contains an open abelian subgroup of finite index) or compact then $G$ is a Moore group
(see \cite[Theorems 1 and 2]{moore}).

Our next aim is Theorem \ref{t:Moore-groups}, the general structure of whose proof
is inspired by that of the proof of \cite[Theorem 4.11]{lt}.
We proceed with some preliminary facts.

Let $K$ be a compact subset of $G$.
In what follows we will view the space $C(K)$ of all continuous functions on $K$
as a subspace of bounded Borel functions on $G$, equipped with the uniform norm;
thus,  the elements of $C(K)$
will be considered as functions $f$ defined on the whole of $G$, and such that $f(s) = 0$ whenever $s\not\in K$.
In the sequel, we will need to make a distinction between the essential supremum norm and the supremum norm; 
thus, we write $\|\cdot\|_{\infty}$ for the former and $\|\cdot\|_{\sup}$ for the latter. 
Let $(f_i)_{i\in \bb{N}},(g_i)_{i\in \bb{N}} \subseteq  C(K)$ be  sequences
with
$\sum_{i=1}^{\infty} \|f_i\|_{\sup}^2 <\infty$ and
$\sum_{i=1}^{\infty}  \|g_i\|_{\sup}^2 <\infty$, and let
$w = \sum_{i=1}^{\infty} f_i \otimes g_i$ be the corresponding element of $C(K)\otimes_{\gamma} C(K)$.
For every $s\in G$, we have
$$\sum_{i=1}^{\infty} |f_i(s)|^2 \leq \sum_{i=1}^{\infty} \|f_i\|_{\sup}^2 < \infty;$$
similarly, $\sum_{i=1}^{\infty} |g_i(t)|^2 < \infty$, $t\in G$.
By the Cauchy-Schwarz inequality, the series
$\sum_{i=1}^{\infty} f_i(s) g_i(t)$ is absolutely convergent for all $s,t\in G$.
One can moreover verify that its sum does not depend on the particular representation of the element
$w\in C(K)\otimes_{\gamma} C(K)$.
We thus view the elements of $C(K)\otimes_{\gamma} C(K)$ as (bounded
measurable) functions on $G\times G$.

\begin{lemma}\label{l_welldf}
Let $G$ be a Moore group,
$K\subseteq G$ be a compact set
and $w\in C(K) \otimes_\gamma C(K)$. 
Then $w_{s,t}\in L^1(G)$ and $\|w_{s,t}\|_1\leq |K| \|w\|_{\gamma}$, for all $s,t\in G$.
\end{lemma}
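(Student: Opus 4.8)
The plan is to bound $\|w_{s,t}\|_1$ directly from a series representation of $w$, estimating one term at a time. Fix a representation $w=\sum_{i=1}^{\infty}f_i\otimes g_i$ with $f_i,g_i\in C(K)$ and $\sum_{i=1}^{\infty}\|f_i\|_{\sup}\|g_i\|_{\sup}<\infty$ (for instance one arising from a pair of square-summable sequences, as in the paragraph preceding the lemma). As recalled there, $w(x,y)=\sum_{i=1}^{\infty}f_i(x)g_i(y)$ with the series absolutely convergent at every point of $G\times G$, and each $f_i$ and $g_i$ vanishes off $K$. Hence, for every $r\in G$,
\[
|w_{s,t}(r)|=|w(sr,r^{-1}t)|\le\sum_{i=1}^{\infty}|f_i(sr)|\,|g_i(r^{-1}t)|.
\]
Since $r\mapsto(sr,r^{-1}t)$ is continuous and $w$ is a Borel function on $G\times G$, the function $w_{s,t}$ is Borel; each summand on the right is a non-negative Borel function of $r$, so Tonelli's theorem yields
\[
\int_G|w_{s,t}(r)|\,dr\le\sum_{i=1}^{\infty}\int_G|f_i(sr)|\,|g_i(r^{-1}t)|\,dr.
\]

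The next step is to estimate each term. For fixed $i$, the integrand $|f_i(sr)|\,|g_i(r^{-1}t)|$ is at most $\|f_i\|_{\sup}\|g_i\|_{\sup}$ and vanishes unless $sr\in K$ (and $r^{-1}t\in K$); the set of such $r$ is contained in $s^{-1}K$, a compact set with $|s^{-1}K|=|K|$ by left invariance of Haar measure. Therefore
\[
\int_G|f_i(sr)|\,|g_i(r^{-1}t)|\,dr\le|K|\,\|f_i\|_{\sup}\|g_i\|_{\sup},
\]
and summing over $i$ gives $\|w_{s,t}\|_1\le|K|\sum_{i=1}^{\infty}\|f_i\|_{\sup}\|g_i\|_{\sup}<\infty$; in particular $w_{s,t}\in L^1(G)$. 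Taking the infimum of the right-hand side over all representations of $w$ in $C(K)\otimes_{\gamma}C(K)$ produces the stated inequality $\|w_{s,t}\|_1\le|K|\,\|w\|_{\gamma}$.

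The argument is short, and the only points that need a word of justification are the measurability of $w_{s,t}$ — obtained by composing the continuous map $r\mapsto(sr,r^{-1}t)$ with the Borel function $w$ — and the final passage to $\|w\|_{\gamma}$, which is legitimate since $\|w\|_{\gamma}$ is by definition the infimum of $\sum_i\|f_i\|_{\sup}\|g_i\|_{\sup}$ over all representations $w=\sum_if_i\otimes g_i$ (and every such representation with finite sum can be rescaled to one with square-summable $(f_i)_i$ and $(g_i)_i$, so the two infima coincide). I do not expect any serious obstacle here: the estimate uses only left invariance of Haar measure, the Moore hypothesis being part of the standing assumptions of this part of the paper rather than essential to this particular bound.
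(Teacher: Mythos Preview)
Your proof is correct and follows essentially the same strategy as the paper: pick a series representation of $w$, bound $\int_G |f_i(sr)||g_i(r^{-1}t)|\,dr$ term by term, sum, and pass to the infimum defining $\|\cdot\|_\gamma$. The only difference is in the termwise estimate: the paper applies Cauchy--Schwarz in $\ell^2(L^2)$ to a square-summable representation (which brings in both $|s^{-1}K|$ and $|tK^{-1}|$, the latter equalling $|K|$ via unimodularity of Moore groups), whereas you use the cruder but more direct bound $|f_i(sr)||g_i(r^{-1}t)|\le \|f_i\|_{\sup}\|g_i\|_{\sup}\chi_{s^{-1}K}(r)$, needing only left invariance---so your remark that the Moore hypothesis is not essential to this particular estimate is indeed justified.
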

\begin{proof}
Let
$(f_i)_{i\in \bb{N}},(g_i)_{i\in \bb{N}} \subseteq  C(K)$ be  sequences
with
$\sum_{i=1}^{\infty} \|f_i\|_{\sup}^2 <\infty$ and
$\sum_{i=1}^{\infty}  \|g_i\|_{\sup}^2 <\infty$
such that
$w = \sum_{i=1}^{\infty} f_i \otimes g_i$ in $C(K)\otimes_{\gamma} C(K)$.
We have
\begin{eqnarray}
\int_G |w(sr,r^{-1}t)| dr
&\leq & \sum_{i=1}^{\infty} \int_G |f_i(sr)| |g_i(r^{-1}t)| dr\nonumber\\
&\leq & \left(\sum_{i=1}^{\infty} \int_{s^{-1}K}  |f_i(sr)|^2 dr\right)^{\frac{1}{2}}
\left(\sum_{i=1}^{\infty} \int_{tK^{-1}}  |g_i(r^{-1}t)|^2 dr\right)^{\frac{1}{2}} \nonumber\\
&\leq &  |K| \left(\sum_{i=1}^{\infty} \|f_i\|_{\sup}^2\right)^{1/2}
\left(\sum_{i=1}^{\infty} \|g_i\|_{\sup}^2\right)^{1/2}.
\nonumber
\end{eqnarray}
The claim now follows.
\end{proof}

Let $G$ be a Moore group.
Fix a compact set $K\subseteq G$ and a function $w\in C(K) \otimes_{\gamma} C(K)$.
By Lemma \ref{l_welldf},
for every function $h\in L^{\infty}(G)$ and any $s,t\in G$,
the function $h w_{s,t}$ is integrable; we set
$$(h\circ w) (s,t) := \int_G h(r) w_{s,t}(r) dr.$$
Note that, by Lemma \ref{l_welldf},
\begin{equation}\label{eq_wst2}
|(h\circ w) (s,t)|\leq |K| \|h\|_{\infty} \|w\|_{\gamma}, \ \ \ s,t\in G.
\end{equation}

As customary, by $\widehat{G}$ we denote the set of all (equivalence classes of)
continuous irreducible unitary representations of
the group $G$.
For $\pi \in \widehat{G}$,
let $H_{\pi}$ be the Hilbert space on which $\pi$ acts.
Setting $d_{\pi} = \dim H_{\pi}$,
let $\{e_{i}^{\pi}\}_{i = 1}^{d_{\pi}}$ be an orthonormal basis of $H_{\pi}$.
Denote by $\pi_{i,j}$ the corresponding coefficient functions of $\pi$, that is,
the functions given by $\pi_{i,j}(s) = (\pi(s) e_i^{\pi}, e_j^{\pi})$, $s\in G$.
By Lemma \ref{l_welldf}, the integral
\begin{equation}\label{eq:w-pi}
w^\pi(s,t) := \int_G w_{s,t}(r) \pi(r) dr
\end{equation}
is well-defined as an element of $\cl B(H_{\pi})$ for all $s,t\in G$.
Let
$$\tilde{w}^\pi (s,t) := \pi(s) w^\pi(s,t), \ \ \ s,t\in G.$$
For all $i,j = 1,\dots,d_{\pi}$, set
\[
w_{i,j}^\pi(s,t) := (w^\pi(s,t)e^{\pi}_i,e_j^{\pi})
\]
and
\[
\tilde{w}^\pi_{i,j} (s,t) := ( \tilde{w}^\pi(s,t) e_i^{\pi}, e_j^{\pi} );
\]
note that
$$w_{i,j}^\pi(s,t) = (\pi_{i,j}\circ w)(s,t).$$

\begin{lemma}\label{l_incbmh}
Let $G$ be a Moore group, $K\subseteq G$ be a compact set,
$\pi\in \widehat{G}$ and $w \in C(K)\otimes_{\gamma} C(K)$.
Then the functions $w_{i,j}^\pi$ and $\tilde{w}^\pi_{i,j}$ belong to $M^{\cb}\ahg$ for all $i,j = 1,\dots,d_{\pi}$.
Moreover, $\tilde{w}^\pi_{i,j}$ lies in the range of the map $m_*$ from Theorem \ref{th_daws}.
\end{lemma}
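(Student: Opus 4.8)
The plan is to prove the claim for $\tilde{w}^\pi_{i,j}$ first, in fact exhibiting it in the range of $m_*$, and then to obtain $w^\pi_{i,j}$ from it by multiplying with matrix coefficients of $\pi$. Throughout one uses that a Moore group is amenable and unimodular (and that $H_\pi$ is finite dimensional, as $\pi\in\widehat{G}$). The first point is to notice that $\tilde{w}^\pi$ depends only on the \emph{product} $st$: starting from
\[
\tilde{w}^\pi(s,t)=\pi(s)w^\pi(s,t)=\int_G w(sr,r^{-1}t)\,\pi(sr)\,dr
\]
and substituting $u=sr$, one obtains $\tilde{w}^\pi(s,t)=\int_G w(u,u^{-1}st)\,\pi(u)\,du=:W^\pi(st)$, where for each $\sigma\in G$ the $\cl B(H_\pi)$-valued integral $W^\pi(\sigma):=\int_G w(u,u^{-1}\sigma)\,\pi(u)\,du$ converges absolutely, since $\|\pi(u)\|=1$ and $u\mapsto w(u,u^{-1}\sigma)=w_{e,\sigma}(u)$ lies in $L^1(G)$ by Lemma~\ref{l_welldf}. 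Hence $\tilde{w}^\pi_{i,j}(s,t)=(W^\pi(st)e^\pi_i,e^\pi_j)$.

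Next I would show that the function $\Theta_{i,j}(\sigma):=(W^\pi(\sigma)e^\pi_i,e^\pi_j)=\int_G w(u,u^{-1}\sigma)\,\pi_{i,j}(u)\,du$ belongs to $A(G)$. Choose $(f_k)_k,(g_k)_k\subseteq C(K)$ with $\sum_k\|f_k\|_{\sup}^2<\infty$, $\sum_k\|g_k\|_{\sup}^2<\infty$ and $w=\sum_k f_k\otimes g_k$ in $C(K)\otimes_\gamma C(K)$. Interchanging sum and integral, which is justified exactly by the Cauchy--Schwarz estimate used in the proof of Lemma~\ref{l_welldf}, gives $\Theta_{i,j}=\sum_k\theta_k$ with $\theta_k(\sigma)=\int_G (f_k\pi_{i,j})(u)\,g_k(u^{-1}\sigma)\,du$. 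Each $\theta_k$ is, up to complex conjugation, the matrix coefficient $\sigma\mapsto(\lambda_\sigma\psi_k,\,f_k\pi_{i,j})$ of the left regular representation, for a suitable $\psi_k\in L^2(G)$ with $\|\psi_k\|_2=\|g_k\|_2$ (here unimodularity is used), so $\theta_k\in A(G)$ and $\|\theta_k\|_A\le\|f_k\pi_{i,j}\|_2\|g_k\|_2\le|K|\,\|f_k\|_{\sup}\|g_k\|_{\sup}$, using $|\pi_{i,j}|\le1$ and $\supp f_k,\supp g_k\subseteq K$. One more application of Cauchy--Schwarz gives $\sum_k\|\theta_k\|_A<\infty$, so the series converges in $A(G)$; as $A(G)$ embeds continuously in $C_0(G)$, its sum agrees pointwise with $\Theta_{i,j}$, whence $\Theta_{i,j}\in A(G)$.

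With this in place the two assertions are immediate. Since $m_*(\phi)(s,t)=\phi(st)$, we get $\tilde{w}^\pi_{i,j}(s,t)=\Theta_{i,j}(st)=m_*(\Theta_{i,j})(s,t)$, so $\tilde{w}^\pi_{i,j}=m_*(\Theta_{i,j})$ lies in the range of $m_*$ and, by Theorem~\ref{th_daws}, in $M^{\cb}\ahg$. For $w^\pi_{i,j}$, the unitarity of $\pi(s)$ gives $w^\pi(s,t)=\pi(s)^*\tilde{w}^\pi(s,t)$, and expanding $\pi(s)e^\pi_j=\sum_l\pi_{j,l}(s)e^\pi_l$ yields
\[
w^\pi_{i,j}=\sum_{l=1}^{d_\pi}\bigl(\overline{\pi_{j,l}}\otimes 1\bigr)\,\tilde{w}^\pi_{i,l}
\]
as a pointwise product of functions on $G\times G$. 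Each $\overline{\pi_{j,l}}\otimes 1$ lies in $B(G)\otimes_\gamma B(G)\subseteq M^{\cb}\ahg$ (the displayed inclusion after Proposition~\ref{p_algtenin}, since $\overline{\pi_{j,l}},1\in B(G)$), and $\tilde{w}^\pi_{i,l}\in M^{\cb}\ahg$ by the previous paragraph; as $M^{\cb}\ahg$ is a Banach algebra under pointwise multiplication, $w^\pi_{i,j}\in M^{\cb}\ahg$.

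The main obstacle is the content of the first two paragraphs: recognizing that the twisted object $\tilde{w}^\pi$ collapses to a function of $st$ alone — which is where unimodularity of Moore groups (already built into Lemma~\ref{l_welldf}) is indispensable — and, more delicately, that the resulting coefficient functions $\Theta_{i,j}$ genuinely lie in $A(G)$ rather than merely in $B(G)$ or $C_b(G)$. The latter rests on rewriting the defining integrals as matrix coefficients of the regular representation, i.e.\ on the identification of $A(G)$ with convolutions of $L^2$-functions, together with the compact support of $w$. The remaining bookkeeping — the interchange of sum and integral, and the reduction of $w^\pi_{i,j}$ to $\tilde{w}^\pi_{i,j}$ — is routine.
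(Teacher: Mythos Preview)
Your proof is correct and follows essentially the same approach as the paper's: both identify $\tilde{w}^\pi_{i,j}$ as $m_*$ of the $A(G)$-function $\sigma\mapsto\int_G w(u,u^{-1}\sigma)\pi_{i,j}(u)\,du$ (a convolution of $L^2$-functions, using unimodularity), and then recover $w^\pi_{i,j}$ as a finite linear combination $\sum_l(\text{coeff.\ of }\pi\otimes 1)\,\tilde{w}^\pi_{i,l}$. The only difference is packaging: the paper defines a bounded bilinear map $\Lambda(f,g)=(f\pi_{i,j})\ast g$ on $C(K)\times C(K)$, sets $\Psi=m_*\circ\Lambda$, and checks $\Psi(w)=\tilde{w}^\pi_{i,j}$ on elementary tensors before invoking density and boundedness, whereas you work directly with a series representation $w=\sum_k f_k\otimes g_k$ and sum term by term; these are equivalent arguments.
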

\begin{proof}
Fix $i,j\in \{1,\dots,d_{\pi}\}$ and let $K\subseteq G$ be a compact set.
Define $\Lambda : C(K)\times C(K)\to A(G)$ by
$\Lambda(f,g) = (f\pi_{i,j})\ast g$.
Then
$$\|\Lambda(f,g)\|_{A} \leq \|f\pi_{i,j}\|_2 \|g\|_2 \leq |K| \|f\|_{\sup}\|g\|_{\sup};$$
in other words, $\Lambda$ is a bounded (bilinear) map and hence induces a map
(denoted in the same fashion) $\Lambda : C(K)\otimes_{\gamma} C(K)\to A(G)$ so that
$$\|\Lambda(w)\|_{A}\leq |K| \|w\|_{\gamma}, \ \ \ w\in C(K)\otimes_{\gamma} C(K).$$
Let $\Psi = m_* \circ \Lambda$; by Theorem \ref{th_daws}, the map
$$\Psi : C(K)\otimes_{\gamma} C(K)\to M^{\cb}\ahg$$
is bounded with $\|\Psi\|\leq |K|$.

Let $\Phi : C(K)\otimes_{\gamma} C(K)\to \ell^{\infty}(G\times G)$ be given by
$\Phi(w) = \tilde{w}_{i,j}^\pi$.
By the definition of $\tilde{w}_{i,j}^\pi$ and Lemma \ref{l_welldf},
$\|\Phi(w)\|_{\infty} \leq |K| \|w\|_{\gamma}$.
If $f,g\in C(K)$, $w = f\otimes g$ and $s,t\in G$ then
\begin{eqnarray*}
\Phi(w)(s,t) & = &
\int_G w(sr,r^{-1}t) (\pi(sr)e_i^{\pi}, e_j^{\pi}) dr
= \int_G w(r,r^{-1}st) \pi_{i,j}(r) dr\\
& = & \int_G f(r) \pi_{i,j}(r) g(r^{-1}st) dr = m_*(f\pi_{i,j}*  {g})(s,t) = \Psi(w)(s,t).
\end{eqnarray*}
By linearity,
$$\Phi(w)(s,t)  = \Psi(w)(s,t), \ \ \ s,t\in G, \  w\in C(K)\odot C(K).$$
Let $w\in C(K)\otimes_{\gamma} C(K)$ and $(w_k)_{k\in \bb{N}}\subseteq C(K)\odot C(K)$
be a sequence with $\|w_k - w\|_{\gamma}\to_{k\to \infty} 0$. By (\ref{eq_unif}),
$$\Psi(w_k)(s,t)\to \Psi(w)(s,t), \ \ \ s,t\in G.$$
On the other hand, since $\Phi$ is bounded,
$$\Phi(w_k)(s,t)\to \Phi(w)(s,t), \ \ \ s,t\in G.$$
It follows that $\Phi(w)(s,t) = \Psi(w)(s,t)$ for all $s,t\in G$;
since $\Psi(w)\in M^{\cb}\ahg$, we conclude that $\tilde{w}_{i,j}^\pi\in M^{\cb}\ahg$.

A calculation similar to the one in \cite[(4.6)]{lt} implies that
\begin{equation}\label{eq:Moore-needed}
w_{i,j}^\pi(s,t) = \sum_{l=1}^{d_{\pi}} \pi_{l,j}(s) \tilde{w}^\pi_{i,l} (s,t).
\end{equation}
Since $\pi_{l,j}\in B(G)$, Proposition \ref{p_algtenin} implies that
$\pi_{l,j}\otimes 1\in M^{\rm cb}\ahg$.
By (\ref{eq:Moore-needed}), $w_{i,j}^\pi \in M^{\rm cb}\ahg$.
\end{proof}

Let $w \in \ahg$. By Lemma \ref{l_newinde} (ii), the function from $G$ into $\ahg$, mapping
$r$ to $w_r$, is continuous.
Therefore, if $f\in L^1(G)$ then
\begin{equation}\label{eq:f.w}
f \star w := \int_G f(r) w_r dr
\end{equation}
is a well-defined $\ahg$-valued integral in Bochner's sense.

\begin{lemma}\label{l_star}
Let $G$ be a Moore group and $w \in \ahg$.
If $(e_\alpha)_{\alpha}$ is a bounded approximate identity of  $L^1(G)$ then
$e_{\alpha}\star w\to_{\alpha} w$.
\end{lemma}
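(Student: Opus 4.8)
The plan is to prove $e_\alpha\star w\to_\alpha w$ first for $w$ in a dense subset of $\ahg$ and then pass to the general case using the uniform bound $\|e_\alpha\star w\|_{\hh}\leq \sup_\alpha\|e_\alpha\|_1\,\|w\|_{\hh}$, which follows directly from the definition \eqref{eq:f.w}, Lemma~\ref{l_newinde}(i) (which gives $\|w_r\|_{\hh}=\|w\|_{\hh}$), and the triangle inequality for Bochner integrals. Since $\{e_\alpha\}$ is bounded in $L^1(G)$, say by $M$, a standard $\varepsilon/3$ argument reduces matters: if $w_0$ is chosen with $\|w-w_0\|_{\hh}<\varepsilon/(3M)$ we get $\|e_\alpha\star w-w\|_{\hh}\leq\|e_\alpha\star(w-w_0)\|_{\hh}+\|e_\alpha\star w_0-w_0\|_{\hh}+\|w_0-w\|_{\hh}\leq \tfrac{2\varepsilon}{3}+\|e_\alpha\star w_0-w_0\|_{\hh}$, so it suffices to handle $w_0$ from a convenient dense subset.

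For the dense subset I would take elementary tensors $w_0=\phi\otimes\psi$ with $\phi,\psi\in A(G)$, since $A(G)\odot A(G)$ is dense in $\ahg$ (and then extend to finite sums by linearity). For such $w_0$ the translated function is $(w_0)_r = R_r(\phi)\otimes L_r(\psi)$ by \eqref{eq_wrlrrr}. The point is to identify $e_\alpha\star w_0$ with a genuine two-sided convolution: unwinding \eqref{eq:f.w} and using that the maps $r\mapsto R_r(\phi)$ and $r\mapsto L_r(\psi)$ are continuous from $G$ into $A(G)$ (as noted in the proof of Lemma~\ref{l_newinde}), the Bochner integral $\int_G e_\alpha(r)\,R_r(\phi)\otimes L_r(\psi)\,dr$ makes sense, and evaluated at $(s,t)$ it equals $\int_G e_\alpha(r)\,\phi(sr)\,\psi(r^{-1}t)\,dr$. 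The task is to show this tends to $\phi(s)\psi(t)=w_0(s,t)$ in the $\|\cdot\|_{\hh}$-norm, not just pointwise. Here I would use that $G$ is a Moore group, hence unimodular (so that the change of variable in the convolution is clean), and reduce to the one-variable statement: writing the integrand via the isometry $R_r\otimes L_r$, the expression $e_\alpha\star w_0 - w_0 = \int_G e_\alpha(r)\big((R_r\otimes L_r)(w_0) - w_0\big)dr$, and since $\int_G e_\alpha(r)\,dr\to 1$ and $\|(R_r\otimes L_r)(w_0)-w_0\|_{\hh}\to 0$ as $r\to e$ (continuity at $r=e$), while the $e_\alpha$ concentrate near $e$, the integral goes to $0$. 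Concretely: given $\varepsilon>0$, pick a neighbourhood $U$ of $e$ with $\|(R_r\otimes L_r)(w_0)-w_0\|_{\hh}<\varepsilon$ for $r\in U$; eventually $e_\alpha$ is essentially supported in $U$ (more precisely, $\|e_\alpha\mathbf 1_{U^c}\|_1\to 0$ because $(e_\alpha)$ is an approximate identity), so split the integral over $U$ and $U^c$ and bound each piece.

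The main obstacle is the last step: although $(e_\alpha)$ being an approximate identity for $L^1(G)$ gives $e_\alpha\ast f\to f$ in $L^1$ for every $f$, one needs the slightly stronger ``localization'' property that the mass of $e_\alpha$ outside any fixed neighbourhood of $e$ tends to $0$, together with $\int e_\alpha\to 1$; this is not automatic for an arbitrary bounded approximate identity, so either one invokes the standard fact that $L^1(G)$ admits such a localized b.a.i. and notes the statement only needs \emph{some} b.a.i. (hence WLOG a localized one — indeed the $e_\alpha=b_\alpha/\|b_\alpha\|_1$ constructed in the proof of Theorem~\ref{t_spectral-of-E-sharp} are of exactly this type), or one argues directly from $e_\alpha\ast g\to g$ in $A(G)$ applied to the two ``slots'' via the isometric embeddings $R_\bullet,L_\bullet$. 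I would take the former route: replace $(e_\alpha)$ by a cofinal localized sub-net, or simply remark that the conclusion is insensitive to the choice of b.a.i. once it holds for one of them and the operators $e_\alpha\star(\cdot)$ are uniformly bounded, so it holds for all. Everything else — the $\varepsilon/3$ reduction, the density of $A(G)\odot A(G)$, the continuity of $r\mapsto(R_r\otimes L_r)(w)$ at $e$, and the uniform bound — is routine given the lemmas already established.
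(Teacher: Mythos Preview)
Your approach is essentially the same as the paper's: reduce to a dense subspace via the uniform bound $\|f\star w\|_{\hh}\leq \|f\|_1\|w\|_{\hh}$, then use continuity of $r\mapsto w_r$ (Lemma~\ref{l_newinde}) together with the localization of a \emph{specific} b.a.i.\ to conclude $f_\alpha\star w\to w$ for that particular net. The paper uses $v\in (A(G)\cap C_c(G))\odot(A(G)\cap C_c(G))\subseteq A(G\times G)$ for the dense subspace while you use elementary tensors, but this is immaterial.

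The gap is in the last step, which you correctly identify as ``the main obstacle'' but do not actually resolve. Neither of your proposed fixes works as stated. An arbitrary bounded approximate identity of $L^1(G)$ need not admit a ``cofinal localized sub-net'' (one can add a fixed small bump far from $e$ to every term of a localized b.a.i.), and the lemma is asserted for \emph{every} b.a.i., so ``WLOG a localized one'' is not available. More importantly, the implication ``holds for one b.a.i.\ $+$ uniform boundedness of $e_\alpha\star(\cdot)$ $\Rightarrow$ holds for all b.a.i.'' is true but not for the reason you suggest: uniform boundedness transfers convergence across a dense set for a \emph{fixed} net, not across different nets. The missing ingredient is the $L^1(G)$-module structure: once $f_\alpha\star w\to w$ for some b.a.i.\ $(f_\alpha)$, Cohen's factorisation theorem gives $w=g\star w'$ with $g\in L^1(G)$, $w'\in\ahg$, and then for any b.a.i.\ $(e_\alpha)$ one has $e_\alpha\star w=(e_\alpha\ast g)\star w'\to g\star w'=w$ because $e_\alpha\ast g\to g$ in $L^1(G)$. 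This is exactly what the paper does, citing \cite[32.22]{hr2} and \cite[32.33(a)]{hr2}. (Alternatively, one can avoid Cohen by the direct estimate $\|e_\alpha\star w-w\|_{\hh}\leq M\|f_\beta\star w-w\|_{\hh}+\|e_\alpha\ast f_\beta-f_\beta\|_1\|w\|_{\hh}+\|f_\beta\star w-w\|_{\hh}$ and choosing first $\beta$ then $\alpha$; this still uses the associativity $(e_\alpha\ast f_\beta)\star w=e_\alpha\star(f_\beta\star w)$, not merely uniform boundedness.)
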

\begin{proof}
Let $(U_{\alpha})_{\alpha}$ be a basis of neighbourhood of $e$,
directed by inverse inclusion, and
$(f_\alpha)_{\alpha}\subseteq L^1(G)$ be such that $\supp(f_{\alpha}) \subseteq U_{\alpha}$
and $\|f_\alpha\|_1=1$ for all $\alpha$.
As in the proof of Proposition~\ref{p_ccehag},
if $\cl X =A(G) \cap C_c(G)$ then $\cl X \odot \cl X$ is dense in $\ahg$;
in addition, $\cl X \odot \cl X \subseteq A(G \times G)$.
For a given $\epsilon > 0$, choose $v \in\cl X \odot \cl X$ so that
$\| w - v \|_{\rm h} < \epsilon/3$. By Lemma \ref{l_newinde} (i),
\[
\|w_r   - v_r\|_{\rm h} < \epsilon/3, \ \ \  r \in G.
\]
Since $v\in A(G\times G)$, there is a neighbourhood $V$ of $e$ such that
\[
\|v_r -v \|_{A} < \frac{\epsilon}{3}, \ \ \ r\in V.
\]
Let $\alpha_0$ be such that $U_{\alpha_0} \subseteq V$.
For all $\alpha\geq \alpha_0$ we have
\begin{eqnarray*}
\|f_{\alpha}\star w - w\|_{\hh}
& = & \left\|  \int_G f_\alpha(r) w_r   dr - w \right\|_{\rm h}
\leq   \int_V \ |f_\alpha(r)|  \|w_r   -  w  \|_{\rm h} dr\\
&\leq &  \int_V |f_\alpha(r)| \left( \|w_r  -  v_r \|_{\rm h}  +  \|v_r - v\|_{\rm h} + \|v   -  w\|_{\rm h} \right) dr\\
&\leq & 2 \| w   -   v  \|_{\rm h}  +   \int_V |f_\alpha(r)|\   \|v_r  - v   \|_{A}  dr < \epsilon.
\end{eqnarray*}
Thus, $f_{\alpha}\star w \to_{\alpha} w$.
It is immediate that $(f_\alpha)$ is a bounded approximate identity of $L^1(G)$.
By Cohen's factorisation theorem \cite[32.22]{hr2} and  \cite[32.33(a)]{hr2},
$e_\alpha \star w  \rightarrow w$ for any bounded approximate identity $(e_\alpha)_\alpha$ in $L^1(G)$.
\end{proof}

\begin{remark}\label{r_eqtwa}
If $K\subseteq G$ is a compact set, $w\in C(K)\otimes_{\gamma} C(K)$ and
$f\in L^{\infty}(G)$ is compactly supported then $f\circ w = f\star w$.
\end{remark}
\begin{proof}
For $s,t\in G$ we have
\begin{eqnarray*}
(f \star w)(s,t)
& = & \langle f \star w,\lambda_s\otimes\lambda_t\rangle
= \left\langle \int_G f(r) w_r dr, \lambda_s\otimes\lambda_t\right\rangle\\
& = & \int_G f(r) \langle w_r, \lambda_s\otimes\lambda_t\rangle dr
= \int_G f(r) w_r(s,t) dr\\
& = & \int_G f(r) w_{s,t}(r) dr = (f\circ w)(s,t).
\end{eqnarray*}
\end{proof}

\begin{theorem}\label{t:Moore-groups}
Let $G$ be a   Moore group and $E \subseteq G$ be a closed set.
If $E$ is a set of spectral synthesis for $A(G)$
then $E^\sharp$ is a set of spectral synthesis for $\ahg$.
\end{theorem}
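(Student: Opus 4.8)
The plan is to prove the non-trivial inclusion $I_{\ahg}(E^\sharp)\subseteq J_{\ahg}(E^\sharp)$, the reverse one being automatic. Since a Moore group is amenable, Proposition~\ref{p_ccehag}(iii) supplies a compactly supported bounded approximate identity $(w_\alpha)_\alpha$ for $\ahg$; as $J_{\ahg}(E^\sharp)$ is closed and $w_\alpha w\to w$ with each $w_\alpha w$ compactly supported and vanishing on $E^\sharp$ whenever $w\in I_{\ahg}(E^\sharp)$, it will suffice to show that every \emph{compactly supported} $w\in I_{\ahg}(E^\sharp)$ lies in $J_{\ahg}(E^\sharp)$. Fixing $a\in A(G)\cap C_c(G)$ equal to $1$ on a compact set $K$ containing the two projections of $\supp(w)$, I would record that $w=(a\otimes a)w$ and hence, using the injectivity of $\iota_{\hh}$, that $w$ lies in $\cl V_a$ (in the notation of Lemma~\ref{l_Gamma}); by injectivity of the Haagerup tensor product and Grothendieck's inequality this places $w$ in $C(K)\otimes_\gamma C(K)$, which is the setting of Lemmas~\ref{l_welldf} and~\ref{l_incbmh}.

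The algebraic heart of the argument concerns the coefficient functions $\tilde w^\pi_{i,j}$, $\pi\in\widehat G$, $1\le i,j\le d_\pi$. By Lemma~\ref{l_incbmh} each $\tilde w^\pi_{i,j}$ lies in the range of the map $m_*$ of Theorem~\ref{th_daws}, say $\tilde w^\pi_{i,j}=m_*(\psi^\pi_{i,j})$ with $\psi^\pi_{i,j}\in A(G)$; unwinding the definition of $\tilde w^\pi$ in \eqref{eq:w-pi} and changing variables (here $G$ unimodular is used) gives $\psi^\pi_{i,j}(x)=\int_G w(u,u^{-1}x)\,\pi_{i,j}(u)\,du$. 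If $x\in E$, then $(u,u^{-1}x)\in E^\sharp$ for every $u\in G$, so $w(u,u^{-1}x)=0$ and therefore $\psi^\pi_{i,j}$ vanishes on $E$. Since $E$ is a set of spectral synthesis for $A(G)$, this forces $\psi^\pi_{i,j}\in J_{A(G)}(E)$, i.e. $\psi^\pi_{i,j}$ is an $A(G)$-limit of compactly supported functions vanishing on neighbourhoods of $E$. Applying the complete contraction $m_*\colon A(G)\to M^{\cb}\ahg$ and recalling $m_*(g)(s,t)=g(st)$, I conclude that $\tilde w^\pi_{i,l}$ is an $M^{\cb}\ahg$-limit of elements of $M^{\cb}\ahg$ vanishing on neighbourhoods of $E^\sharp$. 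Multiplying by the fixed element $(a\otimes a)(\pi_{l,j}\otimes 1)$ of $\ahg$ — note $\pi_{l,j}\in B(G)\subseteq M^{\cb}A(G)$, so $\pi_{l,j}\otimes 1\in M^{\cb}\ahg$ by Proposition~\ref{p_algtenin} — and using boundedness of the action of $M^{\cb}\ahg$ on $\ahg$, it follows that
\[
(a\otimes a)(\pi_{l,j}\otimes 1)\,\tilde w^\pi_{i,l}\in J_{\ahg}(E^\sharp)\qquad\text{for all }\pi,i,j,l,
\]
being an $\ahg$-limit of functions supported in $K\times K$ with support disjoint from $E^\sharp$.

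The final step is to recover $w$ from these building blocks. Following the structure of the proof of \cite[Theorem~4.11]{lt} and exploiting that $G$ is a Moore group, I would pick a bounded approximate identity $(e_\alpha)_\alpha$ of $L^1(G)$ of compactly supported functions whose Fourier transforms $\widehat{e_\alpha}$ are supported on compact (when $G$ is compact, finite) subsets $C_\alpha$ of $\widehat G$, with all $\supp(e_\alpha)$ in a fixed compact neighbourhood of $e$. By Lemma~\ref{l_star}, $e_\alpha\star w\to w$ in $\ahg$, while Remark~\ref{r_eqtwa} gives $(e_\alpha\star w)(s,t)=\int_G e_\alpha(r)\,w(sr,r^{-1}t)\,dr$. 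Inserting the Fourier inversion formula for $e_\alpha$ and using \eqref{eq:w-pi} together with \eqref{eq:Moore-needed}, one obtains, first as an identity of functions on $G\times G$ and then, after multiplying by $a\otimes a$ (which is $1$ on a fixed compact set containing all $\supp(e_\alpha\star w)$), as an identity in $\ahg$,
\[
e_\alpha\star w=\int_{C_\alpha}d_\pi\sum_{i,j,l}\widehat{e_\alpha}(\pi)_{j,i}\,(a\otimes a)(\pi_{l,j}\otimes 1)\,\tilde w^\pi_{i,l}\;d\mu(\pi).
\]
By the previous paragraph the integrand is $J_{\ahg}(E^\sharp)$-valued, and by the uniform bounds of Lemmas~\ref{l_welldf} and~\ref{l_incbmh} together with $\|\pi_{l,j}\|_{B(G)}\le 1$ its $\ahg$-norm is dominated by an integrable function of $\pi$ on $C_\alpha$; hence the integral converges in $\ahg$, so $e_\alpha\star w\in J_{\ahg}(E^\sharp)$, and letting $\alpha\to\infty$ yields $w\in J_{\ahg}(E^\sharp)$.

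I expect the main obstacle to be this last paragraph: producing an approximate identity of $L^1(G)$ with compactly supported "dual support", and — more delicately — verifying that the resulting Plancherel-type integral converges in the $\ahg$-norm rather than merely pointwise. This is precisely where the Moore hypothesis (unimodularity plus finite-dimensionality of all irreducible representations) is indispensable, and where the harmonic-analytic core of \cite{lt} has to be transferred; Lemmas~\ref{l_welldf}--\ref{l_star} are tailored to make the objects $w_{s,t}$, $w^\pi$, $\tilde w^\pi_{i,j}$ and the integral $e_\alpha\star w$ well behaved for this purpose. By contrast, the reductions in the first paragraph use only amenability and regularity, and the key identity $\psi^\pi_{i,j}\in J_{A(G)}(E)$ becomes immediate once the integral formula for $\psi^\pi_{i,j}$ is in hand.
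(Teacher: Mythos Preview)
Your reduction to compactly supported $w$, the placement of $w$ in $C(K)\otimes_\gamma C(K)$, and the identification $\tilde w^\pi_{i,l}=m_*(\psi^\pi_{i,l})$ with $\psi^\pi_{i,l}\in J_{A(G)}(E)$ are exactly what the paper does (its Step~1). The divergence is entirely in your final paragraph, and the obstacle you flag is real and, in the form you set it up, fatal: you ask for a bounded approximate identity $(e_\alpha)$ of $L^1(G)$ whose members have support in a fixed compact neighbourhood of $e$ \emph{and} have compactly supported operator Fourier transform. For a non-compact, non-discrete Moore group such as $G=\bb{R}$ this is impossible (a nonzero compactly supported $L^1$ function cannot have compactly supported Fourier transform). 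Even if you drop the compact support on $e_\alpha$ and only ask for compact dual support, your displayed identity then computes only $(a\otimes a)(e_\alpha\star w)$, and---more seriously---the Bochner convergence of the $\widehat G$-integral in the $\ahg$-norm needs a bound of the form $d_\pi\sum_{i,j,l}|\widehat{e_\alpha}(\pi)_{j,i}|\,\|(a\otimes a)(\pi_{l,j}\otimes 1)\tilde w^\pi_{i,l}\|_{\hh}$ to be $\mu$-integrable over $C_\alpha$; the bounds from Lemmas~\ref{l_welldf}--\ref{l_incbmh} give only a constant in $\pi$ for $\|\tilde w^\pi_{i,l}\|_{\rm cbm}$, so the sum picks up a factor $d_\pi^3$, and for Moore groups $d_\pi$ need not be bounded. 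Neither of these issues is addressed by the lemmas you cite.

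The paper bypasses all of this by passing to the dual: fix $u\in\ev(G)$ with $\supp_{\hh}(u)\subseteq E^\sharp$ and show $\langle u,w\rangle=0$. The key device (its Step~2) is a localisation identity: for $u$ with \emph{compact} support, choosing $a\in A(G)\cap C_c(G)$ equal to $1$ on a neighbourhood of $\supp_{\hh}(u)$ and $F=\supp(a)$, one has $(f\star w)\cdot u=(f\chi_{F^{-1}F}\star w)\cdot u$ for every compactly supported $f\in L^1(G)$; this works because $(a\otimes a)\cdot u=u$ forces the integrand in $f\star w$ to see only $r\in F^{-1}F$. Now one needs no Fourier inversion at all: by \cite[13.6.5]{dix} the functions $\chi_{F^{-1}F}\pi_{i,j}$ have dense span in $L^1(G)$ (unimodularity used here), and for these one computes directly $(\pi_{i,j}\chi_{F^{-1}F}\star w)\cdot u=w^\pi_{i,j}\cdot u=0$ from Step~1 and \eqref{eq:Moore-needed}. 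Continuity of $f\mapsto f\star w$ from $L^1(G)$ to $\ahg$ (immediate from Lemma~\ref{l_newinde}) then gives $(f\star w)\cdot u=0$ for all such $f$, and Lemma~\ref{l_star} finishes the job. The moral: the $L^1$-density of truncated coefficients is exactly the soft substitute for the Plancherel formula that makes the argument go through without any growth control on $d_\pi$ or any Paley--Wiener constraint.
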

\begin{proof}
Let $E\subseteq G$ be a set of spectral synthesis for $A(G)$
and $w\in I_{\ahg}(E^\sharp)$.
Since $G$ is amenable, Theorem \ref{p_HWA} implies that
$w$ can be approximated by compactly supported functions in $I_{\ahg}(E^\sharp)$;
we may thus assume that $w$ is compactly supported itself.
Let $K\subseteq G$ be a compact set such that $\supp(w)\subseteq K\times K$.
We have that $w\in C_0(G)\otimes_{\hh} C_0(G)$;
by Grothendieck's inequality,
$w\in C_0(G) \otimes_\gamma C_0(G)$.
Write $w = \sum_{k=1}^{\infty} f_k\otimes g_k$, where $(f_k)_{k\in \bb{N}}$ and $(g_k)_{k\in \bb{N}}$
are families of functions in $C_0(G)$ such that
$\sum_{k=1}^{\infty} \|f_k\|_{\infty}^2  < \infty$ and $\sum_{k=1}^{\infty} \|g_k\|_{\infty}^2 < \infty$.
Let $\tilde{f}_k = f_k\chi_{K}$ (resp. $\tilde{g}_k = f_k\chi_{K}$).
Then $\tilde{f}_k, \tilde{g}_k\in C(K)$ for each $k\in \bb{N}$;
moreover,
$\sum_{k=1}^{\infty} \|\tilde{f}_k\|_{\sup}^2  < \infty$ and $\sum_{k=1}^{\infty} \|\tilde{g}_k\|_{\sup}^2 < \infty$.
Letting $\tilde{w}_m = \sum_{k=1}^{m} \tilde{f}_k\otimes \tilde{g}_k$, $m\in \bb{N}$,
we thus have that the sequence $(\tilde{w}_m)_{m\in \bb{N}}$ converges
in $C(K)\otimes_{\gamma} C(K)$; let $\tilde{w}$ be its limit.
Since the uniform norm is dominated by the projective one,
we easily see that $w(s,t) = \tilde{w}(s,t)$ for all $s,t\in K$.
Thus, $w\in C(K)\otimes_{\gamma} C(K)$.

Since $w \in I_{\ahg}(E^\sharp)$, we have that $w_r|_{E^{\sharp}} = 0$ for all $r\in G$,
and hence the functions $w^\pi_{i,j}$ and $\tilde{w}^\pi_{i,j}$ vanish on
$E^\sharp$ for all $\pi \in \widehat{G}$ and all $1\leq i,j \leq d_\pi$.
In the sequel, we fix
$u \in \ev(G)$ with  $\supp(u) \subseteq E^\sharp$.
We divide the rest of the proof in three steps.

\medskip

{\it Step 1.}   $w^\pi_{i,j} \cdot u = 0$ for all $\pi\in \widehat{G}$ and all $i,j = 1,\dots,d_{\pi}$.

\medskip

Fix $\pi \in \widehat{G}$ and $i,j \in \{1,\dots,d_\pi\}$.
By Lemma \ref{l_incbmh}, there exists $a \in  A(G)$ such that $m_*(a) = \tilde{w}^\pi_{i,j}$.
Since $\tilde{w}^\pi_{i,j}$ vanishes on $E^\sharp$,
we have that $a \in I_{A(G)}(E)$.
Since $E$ is a set of spectral synthesis for $A(G)$, there exists a sequence
$(a_n)_{n\in \bb{N}}\subseteq A(G)$, whose elements have compact supports disjoint from $E$,
such that $\|a_n - a\|_A \rightarrow_{n\to\infty} 0$.
Note that the element $m_*(a_n)$ of $M^{\rm cb}\ahg$ vanishes on a neighbourhood of
$E^\sharp$ for each $n\in \bb{N}$.
By Theorem \ref{th_daws}, if $w'' \in \ahg$ then $m_*(a_n) w''\in \ahg$. Moreover, if $w''$ is compactly supported then
$m_*(a_n) w''$ is compactly supported and vanishes on a neighbourhood of $E^\sharp$;
hence, $m_*(a_n) w'' \in J_{\ahg}(E^{\sharp})$.
By Proposition \ref{p_ccehag}, every element $w'$ of $\ahg$ is the limit of
compactly supported elements of $\ahg$. It follows that $m_*(a_n) w' \in J_{\ahg}(E^{\sharp})$
for every $w'\in \ahg$ and every $n\in \bb{N}$. Therefore,
\[
\langle\tilde{w}^\pi_{i,j} \cdot u, w' \rangle
= \langle u, \tilde{w}^\pi_{i,j}  w' \rangle = \langle u, m_*(a) w'\rangle =
\lim_{n\to\infty} \langle u, m_*(a_n) w'\rangle = 0,
\]
for every $w'\in \ahg$.
This shows that $\tilde{w}^\pi_{i,j} \cdot u = 0$;
by (\ref{eq:Moore-needed}),
$w^\pi_{i,j} \cdot u = 0.$

\medskip

{\it Step 2.}
If $\supp_{\hh}(u) \subseteq K\times K$ then
$(f \star w)\cdot u  = 0$ for all  $f\in L^1(G)$.

\medskip

Let $U$ be an open relatively compact subset of $G$ such that $K \subseteq U$. 
Let $a \in A(G) \cap C_c(G)$ so that $a|_{\overline{U}}\equiv 1$. Let $F\subseteq G$ be a compact set
such that $F^{-1}=F$ and $\supp(a)\subseteq F$.
For a compactly supported element $f \in L^1(G)$,
using Lemma \ref{l_supp-equality}, for all $v \in \ahg$ we have
\begin{eqnarray}\label{eq:on}
\langle u, (f \star w) v  \rangle
& = &
\langle (a \otimes a) \cdot u, (f \star w) v\rangle
= \langle u, (a\otimes a) ( f \star w) v \rangle  \nonumber \\
& = & \langle u, \chi_{F\times F} (a\otimes a)  (f \star w) v\rangle.
\end{eqnarray}  
On the other hand, using Remark \ref{r_eqtwa} we have
\begin{eqnarray*}
\chi_{F \times F} (f \star w)(s,t)
& = & \int_G \chi_F(s) \chi_F(t) f(r) w(sr,r^{-1}t) dr\\
&=& \chi_F(s) \chi_F(t) \int_{F^{-1}F}  f(r) w( sr,r^{-1}t) dr\\
&=&  \chi_{F \times F} ( (\chi_{F^{-1}F}f) \star w) (s,t).
\end{eqnarray*}
Now (\ref{eq:on}) implies
\begin{eqnarray*}
\langle u, (f \star w) v \rangle &=& \langle u, (a \otimes a) \chi_{F \times F} ((\chi_{F^{-1}F} f) \star w) v \rangle \\
&=& \langle u, (a \otimes a)((\chi_{F^{-1}F}f)\star w)   v\rangle \\
&=&  \langle (a\otimes a) \cdot u, (  (f \chi_{F^{-1}F}) \star w)  v\rangle\\
 &=& \langle u, ( (f \chi_{F^{-1}F}) \star w)   v\rangle.
\end{eqnarray*}
Thus,
\begin{equation}\label{eq:f-star-w}
(f \star w) \cdot u = (f \chi_{F^{-1}F} \star w) \cdot u.
\end{equation}
If $\pi \in \widehat{G}$, $i,j \in \{1, \ldots, d_\pi\}$ and $v\in \ahg$ then
\begin{eqnarray*}
(a\otimes a) w_{i,j}^\pi (s,t)
& = &
a(s) a(t) \int_G \pi_{i,j}(r) w_{s,t}(r)dr\\
& = &
a(s) a(t) \chi_F(s) \chi_F(t) \int_G \pi_{i,j}(r) w(sr,r^{-1}t)dr\\
& = &
a(s) a(t) \int_G \pi_{i,j}(r)\chi_{F^{-1} F}(r) w(sr,r^{-1}t)dr\\
&=& (a\otimes a) (\pi_{i,j}\chi_{F^{-1}F} \circ w)(s,t).
\end{eqnarray*}
By Remark~\ref{r_eqtwa} and Step~1 we now have
 \begin{eqnarray*}
((\pi_{i,j} \chi_{F^{-1}F})\star w) \cdot u
& = &
(\pi_{i,j}\chi_{F^{-1}F} \circ w) \cdot u\\
& = & (\pi_{i,j}\chi_{F^{-1}F} \circ w) \cdot ((a\otimes a)\cdot u)\\
& = &
(a\otimes a)(\pi_{i,j}\chi_{F^{-1}F} \circ w) \cdot u\\
& = &
(a\otimes a)w_{i,j}^\pi \cdot u
=
w_{i,j}^\pi \cdot ((a\otimes a) \cdot u))
= w_{i,j}^\pi \cdot u = 0.
\end{eqnarray*}
Since $G$ is unimodular, by \cite[13.6.5]{dix}, 
$f \chi_{F^{-1}F}$ can be approximated  in $L^1(G)$ by finite linear combinations of the form
\begin{equation}\label{eq:form-of-bai}
\sum_{k=1}^m  c_k \chi_{F^{-1}F} f_k
\end{equation}
where, for every $k$, the function
$f_k$ has the form $\pi_{i,j}$ for some $\pi \in \widehat{G}$ and some
$i,j\in \{1,\dots,d_{\pi}\}$.
Hence, by \eqref{eq:f-star-w},
 \[
 (f \star w) \cdot u= (f \chi_{F^{-1}F} \star w) \cdot u = 0.
 \]

\medskip

{\it Step 3.} $\langle u,  w \rangle  = 0$.

\medskip

Let  $(\varepsilon_\beta)_{\beta}$ be  a bounded approximate identity of $A(G)$,
such that $\supp(\varepsilon_\beta)$ $\subseteq K_\beta$ for some compact set $K_{\beta}\subseteq G$.
We can assume, without loss of generality,
that $K\subseteq K_{\beta}$ for each $\beta$.
Assume first that $u$ is compactly supported and let $(e_{\alpha})_{\alpha}$ be a bounded approximate 
identity for $L^1(G)$. Using Step 2 and Lemma \ref{l_star}, we have 
 \begin{eqnarray*}
 \langle u, w\rangle &=& \lim_\beta \langle u, (\varepsilon_\beta \otimes \varepsilon_\beta) w\rangle \\
&=&  \lim_\beta \langle (\varepsilon_\beta\otimes \varepsilon_\beta) \cdot u, w\rangle \\
 &=& \lim_\beta \lim_\alpha \langle(e_\alpha \star w) \cdot u, \varepsilon_\beta\otimes \varepsilon_\beta\rangle =0.
 \end{eqnarray*}
 If $u$ is arbitrary then
 \[
 \langle u,w\rangle = \lim_\beta \langle u, (\varepsilon_\beta \otimes \varepsilon_\beta) w\rangle 
 = \lim_\beta \langle (\varepsilon_\beta \otimes \varepsilon_\beta) \cdot u, w\rangle =0.
 \] 
 
We have thus shown that
$\langle u,w\rangle = 0$ whenever
$w\in I_{\ahg}(E^\sharp)$ and $u\in \ev(G)$ is supported in $E^\sharp$.
This shows that $E^{\sharp}$ is a set of spectral synthesis for $\ahg$.
\end{proof}

\noindent {\bf Remark.}
We note that, in the proof of Theorem~\ref{t:Moore-groups},
the fact that $G$ is a Moore group was essentially used
in the finiteness of the sum (\ref{eq:Moore-needed}).

\begin{corollary}\label{c:moore-antidiagonal}
Let $G$ be a Moore group. A closed set $E\subseteq G$ is a set of spectral synthesis for $A(G)$
if and only if $E^{\sharp}$ is a set of spectral synthesis for $\ahg$.
\end{corollary}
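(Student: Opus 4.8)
The plan is simply to combine Theorems \ref{t_spectral-of-E-sharp} and \ref{t:Moore-groups}, after observing that the amenability of $G$ supplies the approximate identity needed in the first of these. Recall (as noted after Theorem \ref{t_spectral-of-E-sharp}, citing \cite{palmer}) that every Moore group is amenable; hence, by Leptin's Theorem, $A(G)$ possesses a bounded approximate identity, and in particular the hypothesis ``$A(G)$ has a (possibly unbounded) approximate identity'' appearing in Theorem \ref{t_spectral-of-E-sharp} is automatically satisfied for a Moore group $G$.

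First I would treat the ``only if'' direction: suppose $E$ is a set of spectral synthesis for $A(G)$. Since $G$ is a Moore group, Theorem \ref{t:Moore-groups} applies verbatim and yields that $E^{\sharp}$ is a set of spectral synthesis for $\ahg$.

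For the ``if'' direction, suppose $E^{\sharp}$ is a set of spectral synthesis for $\ahg$. By Theorem \ref{t_spectral-of-E-sharp}, $E$ is a set of local spectral synthesis for $A(G)$; moreover, invoking the strengthened conclusion of that theorem together with the existence of an approximate identity in $A(G)$ established in the first paragraph, we conclude that $E$ is in fact a set of spectral synthesis for $A(G)$. Combining the two directions completes the proof.

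I do not expect a genuine obstacle here: the corollary is a direct consequence of the two theorems, and the only point that requires a moment's attention is the invocation of the amenability (hence Leptin's Theorem) for the class of Moore groups, which is exactly what converts the ``local spectral synthesis'' conclusion of Theorem \ref{t_spectral-of-E-sharp} into full spectral synthesis. All the substantive analytic content has already been carried out in Theorems \ref{t_spectral-of-E-sharp} and \ref{t:Moore-groups}.
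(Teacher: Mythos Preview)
Your proposal is correct and matches the paper's own proof, which simply reads ``Immediate from Theorems \ref{t_spectral-of-E-sharp} and \ref{t:Moore-groups}.'' The only detail you make explicit that the paper leaves implicit is the observation that a Moore group is amenable (hence $A(G)$ has a bounded approximate identity via Leptin's Theorem), which is precisely what is needed to upgrade the local-spectral-synthesis conclusion of Theorem \ref{t_spectral-of-E-sharp} to full spectral synthesis.
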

\begin{proof}
Immediate from Theorems \ref{t_spectral-of-E-sharp} and \ref{t:Moore-groups}.
\end{proof}

The subset
$$\tilde{\Delta} = \{(s,s^{-1}): s\in G\}$$
of $G\times G$ is usually referred to as the
\emph{antidiagonal} of $G$.
It is known that if the group $G$ is compact,
the antidiagonal is not a set of spectral synthesis for $A(G \times G)$
unless the connected component of the neutral element is abelian (see \cite[Theorem~2.5]{fss}).
On the other hand,
the antidiagonal coincides with $\{e\}^{\sharp}$;
since $\{e\}$ is a set of spectral synthesis for $A(G)$,
Theorem \ref{t:Moore-groups} implies that
$\tilde{\Delta}$ is a set of spectral synthesis for $\ahg$ if $G$ is a Moore group.
In Section \ref{s_ibm}, we will refine this statement and give a characterisation of all elements
in the dual of $\ahg$ supported in the antidiagonal for more general groups.


\section{The case of virtually abelian groups}\label{s_vag}

It is easy to see that the flip of variables preserves spectral synthesis in the algebra $A(G\times G)$. 
The question of whether the same holds true for the algebra $\ahg$ is the motivation behind the present section. 
Recall that a locally compact group is called virtually abelian, if it has an open abelian subgroup of finite index. 
We assume, in this section, that $G$ is a virtually abelian group. 
We first give a general result on the extended Haagerup tensor product; in the case of the Haagerup tensor product,
it was established in \cite{It}.

\begin{proposition}\label{p_flipeh}
Let $\cl M$ be a unital C*-algebra. The following are equivalent:

(i) \ $\cl M$ is subhomogeneous;

(ii) the linear map $\frak{f}: \cl M\odot \cl M \to \cl M\otimes_{\eh} \cl M$, given on elementary tensors by $\frak{f}(a\otimes b) = b\otimes a$,
extends to a completely bounded map on $\cl M\otimes_{\eh} \cl M$. 
\end{proposition}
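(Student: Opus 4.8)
The plan is to prove the equivalence by a careful analysis of the extended Haagerup tensor product restricted to a matrix algebra, using the classical structure theory of subhomogeneous C*-algebras. For the direction (i)$\Rightarrow$(ii), suppose $\cl M$ is subhomogeneous, so there is $n\in\bb{N}$ with all irreducible representations of dimension at most $n$. The key fact is that the extended Haagerup tensor product behaves well with respect to the weak* topology: since $\cl M\subseteq \cl M^{**}$ and $\cl M^{**}$ is a direct sum of algebras of the form $L^{\infty}(X,\M_k)$ with $k\leq n$, it suffices to prove the flip is completely bounded on $\M_k\otimes_{\eh}\M_k$ with a bound independent of $k\leq n$, and then patch these together using the weak* Haagerup (= $\sigma$-Haagerup) description of $\cl M\otimes_{\eh}\cl M$ from \cite{bs} and \cite{er}. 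For finite-dimensional $\M_k$ the extended Haagerup tensor norm coincides with the ordinary Haagerup tensor norm, so I would invoke the finite-dimensional computation already in \cite{It}: the flip on $\M_k\otimes_{\hh}\M_k$ has completely bounded norm at most $k$ (this is sharp, and it is exactly here that the uniform bound on $k$ is crucial). Combining the uniform bound over the direct summands with the identification $\cl M\otimes_{\eh}\cl M \equiv \cl M\otimes_{\sigma\hh}\cl M$ and the fact that the flip is weak*-continuous on the elementary tensors and thus extends, gives a completely bounded flip on $\cl M\otimes_{\eh}\cl M$.

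For the direction (ii)$\Rightarrow$(i), I would argue by contraposition: if $\cl M$ is \emph{not} subhomogeneous, then for every $n$ there is an irreducible representation $\pi_n:\cl M\to \cl B(H_n)$ with $\dim H_n \geq n$ (or a representation onto $\M_n$), and by standard C*-algebra facts one can find, for each $n$, a completely positive, completely contractive map $\cl M\to \M_n$ that is "large" — concretely, a unital completely positive map whose restriction to a suitable finite-dimensional operator subsystem is completely isometric onto a copy of $\M_n$. The point is that complete boundedness of the flip on $\cl M\otimes_{\eh}\cl M$ would pass, via the functoriality of $\otimes_{\eh}$ under complete contractions (and the completely isometric embedding $\M_n\otimes_{\eh}\M_n \hookrightarrow \M_n\otimes_{\hh}\M_n$), to a uniform bound on the completely bounded norm of the flip on $\M_n\otimes_{\hh}\M_n$ for all $n$. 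But the latter norm is known to grow (it equals $n$, again by \cite{It}), a contradiction. So $\cl M$ must be subhomogeneous.

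The technical heart of the argument — and the step I expect to be the main obstacle — is the passage between $\cl M\otimes_{\eh}\cl M$ and the direct integral / direct sum decomposition coming from $\cl M^{**}$, i.e. making precise that the flip on $\cl M\otimes_{\eh}\cl M$ is controlled by the flips on the "fibres" $\M_k\otimes_{\eh}\M_k$ uniformly in $k$. One has to be careful because the extended Haagerup tensor product does not commute with arbitrary direct sums in a naive way, and the weak*-continuity of the flip must be verified on a weak*-dense subspace (the span of elementary tensors $a\otimes b$, which is weak*-dense by \cite[Lemma 5.8]{er}) and then bootstrapped using the identification $\cl M\otimes_{\eh}\cl M = (\cl M_*\otimes_{\hh}\cl M_*)^*$. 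I would handle this by first establishing the statement for $\cl M = L^{\infty}(X,\M_k)$ with fixed $k$, where $\cl M\otimes_{\eh}\cl M$ can be described explicitly, and then for a general subhomogeneous $\cl M$ by embedding it weak*-continuously and completely isometrically into such an algebra (or a finite direct sum of them) via its bidual and using naturality of $\otimes_{\sigma\hh}$ under normal complete contractions.

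A cleaner alternative for (i)$\Rightarrow$(ii), which I would pursue if the direct-integral bookkeeping becomes unwieldy, is to use the universal property of $\otimes_{\eh}$ together with the fact that a subhomogeneous $\cl M$ of degree $n$ admits a completely isometric normal embedding into $\M_n(L^{\infty}(X))$ for some measure space $X$: since $L^{\infty}(X)$ is abelian, its flip is trivially completely contractive, and the flip on $\M_n(\cl A) = \M_n\otimes \cl A$ decomposes as (flip on $\M_n$) $\otimes$ (flip on $\cl A$) under $\otimes_{\eh}$, reducing everything to the single finite-dimensional estimate for $\M_n$. This keeps the "hard" input isolated to the classical fact $\|\frak{f}\|_{\cb,\M_n\otimes_{\hh}\M_n} = n$ from \cite{It}, and the rest is functoriality of the extended Haagerup tensor product, which is available from \cite{er} and \cite{bs}.
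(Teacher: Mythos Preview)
Your proposal is correct in outline, but both directions are handled more directly in the paper, and the comparison is instructive.

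For (i)$\Rightarrow$(ii), you propose to reduce to the fibre case $\M_k\otimes_{\hh}\M_k$ via the bidual decomposition and then worry (rightly) about how $\otimes_{\eh}$ interacts with direct sums or integrals. The paper bypasses all of this by working \emph{directly with the defining row--column description of the $\eh$-norm}. Given $u = A\odot B$ with $A=(a_\alpha)$ a row and $B=(b_\alpha)$ a column in $\cl M\subseteq \cl Z\otimes\M_n$, the flipped element requires $(b_\alpha)$ as a row and $(a_\alpha)$ as a column, so one only needs the estimate $\bigl\|\sum a_\alpha^* a_\alpha\bigr\| \le n^2 \bigl\|\sum a_\alpha a_\alpha^*\bigr\|$ (and its symmetric counterpart). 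The paper obtains this by observing that on $\cl Z\otimes\M_n$ the adjoint is $\gamma\otimes\tau_n$ (conjugation on $\cl Z$ tensor transpose on $\M_n$), and $\|\tau_n\|_{\cb}=n$; the bound then drops out of a one-line norm computation. This is morally the same ``reduce to the transpose on $\M_n$'' idea underlying your cleaner alternative, but executed at the level of a single row/column estimate rather than through functoriality of $\otimes_{\eh}$, so none of the direct-integral bookkeeping you flag as the main obstacle is actually needed.

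For (ii)$\Rightarrow$(i), your contraposition via irreducible representations onto large matrix algebras is exactly the content of Itoh's theorem \cite[Theorem~4]{It}. The paper simply notes that $\cl M\otimes_{\hh}\cl M\subseteq \cl M\otimes_{\eh}\cl M$ completely isometrically, that the algebraic tensor product is $\|\cdot\|_{\hh}$-dense, so $\frak f$ restricts to a completely bounded flip on $\cl M\otimes_{\hh}\cl M$, and then cites Itoh. Your argument is a correct unwinding of that citation, but there is no need to reprove it.
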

\begin{proof}
(i)$\Rightarrow$(ii)
Suppose that $\cl M \subseteq \cl Z \otimes \mathbb{M}_n(\Bbb{C})$ for some $n$ and some commutative
von Neumann algebra $\cl Z$. 
Assume that $\cl Z$ coincides with the multiplication masa of $L^{\infty}(X,\mu)$, 
acting on the Hilbert space $L^2(X,\mu)$, for some suitably chosen measure space $(X,\mu)$. 
Denote by $\gamma$ the involution on $\cl Z$, that is, $\gamma(f) = f^*$, $f\in \cl Z$. 
Let $\tau_n$ be the matrix transpose acting on $\mathbb{M}_n(\Bbb{C})$.

If $a\in \cl M$ then 
$a^*=(\gamma \otimes\tau_n)(a)$. 
We note first that $\gamma \otimes\tau_n$ is completely bounded.
Indeed, if $m\in \bb{N}$ then making the identification 
$\mathbb{M}_m(\cl Z\otimes \mathbb{M}_n) \equiv \cl Z\otimes \mathbb{M}_m(\mathbb{M}_n)$, we have that 
the map $(\gamma\otimes\tau_n)^{(m)}$ corresponds to $\gamma\otimes\tau_n^{(m)}$.
Since $\tau_n$ is completely bounded with $\|\tau_n\|_{\rm cb} = \|\tau_n^{(n)}\| = n$, 
we have that $\|(\gamma \otimes\tau_n)^{(m)}\|\leq n$ for every $m$.

Let $x \in \cl M\otimes_{\eh}\cl M$ and write $x = A\odot B$, where 
$A$ is the row operator $(a_{\alpha})_{\alpha\in \bb{A}}$, while $B$ is the column operator $(b_\alpha)_{\alpha\in \bb{A}}$. 
We have 
\begin{eqnarray*}
\left\|\sum_{\alpha\in \bb{A}} a_\alpha^* a_\alpha\right\| 
& = & 
\left\|(\gamma \otimes\tau_n^{(\infty)})(A) (\gamma \otimes\tau_n^{(\infty)})(A^*)\right\|
\leq 
\left\|\gamma \otimes\tau_n\right\|_{\rm cb}^2 \|A\|^2\\
& = & \left\|\gamma \otimes\tau_n\right\|_{\rm cb}^2 \|AA^*\|
\leq n^2 \left\|\sum_{\alpha\in \bb{A}} a_\alpha a_\alpha^*\right\|.
\end{eqnarray*}
A similar argument shows that
$$ \left\|\sum_{\alpha\in \bb{A}} b_\alpha b_\alpha^*\right\| \leq n^2 \left\|\sum_{\alpha\in \bb{A}} b_\alpha^* b_\alpha\right\|,$$
and (ii) is established.

(ii)$\Rightarrow$(i) 
We have that $\cl M\otimes_{\rm h}\cl M \subseteq \cl M\otimes_{\eh}\cl M$ completely isometrically, 
and that the algebraic tensor product $\cl M\odot \cl M$ is dense in $\cl M\otimes_{\rm h}\cl M$.
It follows that the map $\frak{f}$ leaves $\cl M\otimes_{\rm h}\cl M$ invariant. 
By \cite[Theorem 4]{It}, $\cl M$ is subhomogeneous. 
\end{proof}

\begin{theorem}\label{th_vaf}
Let $G$ be a locally compact group. The following are equivalent:

(i) \ $G$ is virtually abelian;

(ii) the linear 
map $\sigma : A(G)\odot A(G) \to A(G) \otimes_{\rm h} A(G)$, given on elementary tensors by $\sigma(\phi \otimes \psi) = \psi \otimes \phi$,
extends to a completely bounded map on $\ahg$.
\end{theorem}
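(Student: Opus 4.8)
\textbf{Proof strategy for Theorem \ref{th_vaf}.}
The plan is to deduce both implications from Proposition \ref{p_flipeh}, applied to a suitable von Neumann algebra built from $G$, by dualising. The natural candidate is $\cl M = \vn(G)$: recall from the preliminaries that $\ahg^* = \ev(G) = \vn(G)\otimes_{\eh}\vn(G)$, and that the flip $\sigma$ on $A(G)\odot A(G)$ is, at the level of the algebraic tensor product, the pre-adjoint of the flip $\frak{f}$ on $\vn(G)\odot\vn(G)$ (this is immediate on elementary tensors from \eqref{eq_lambdas}, since $\sigma$ sends $\phi\otimes\psi$ to $\psi\otimes\phi$ while $\frak{f}$ sends $\lambda_s\otimes\lambda_t$ to $\lambda_t\otimes\lambda_s$, and $\langle\psi\otimes\phi,\lambda_s\otimes\lambda_t\rangle = \langle\phi\otimes\psi,\lambda_t\otimes\lambda_s\rangle$). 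So I would first record this duality, and the standard fact that a densely defined map on an operator space extends to a completely bounded map precisely when its adjoint does, with equal cb-norm; here one must be a little careful, using that $\ahg$ is an operator space whose dual is $\ev(G)$ and that $\frak{f}$ on $\vn(G)\odot\vn(G)$ is automatically weak* continuous as the pre-adjoint of a densely defined map.

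Next I would invoke the structure theory of $\vn(G)$: by a theorem of Moore (and the subsequent literature on Moore groups referenced in the excerpt), $\vn(G)$ is subhomogeneous if and only if $G$ is virtually abelian. More precisely, $G$ is virtually abelian exactly when $\sup_{\pi\in\widehat G} d_\pi < \infty$, which is exactly the condition that $\vn(G) \cong \int^{\oplus}\mathbb M_{d_\pi}$ embeds in $\cl Z\otimes\mathbb M_n$ for some $n$; this is the statement labelled ``subhomogeneous'' in Proposition \ref{p_flipeh}. So I would cite this equivalence (it is essentially \cite[Theorems 1 and 2]{moore}) to translate ``$G$ virtually abelian'' into ``$\vn(G)$ subhomogeneous''.

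Given these two translations, the theorem falls out of Proposition \ref{p_flipeh} applied to $\cl M = \vn(G)$. For (i)$\Rightarrow$(ii): if $G$ is virtually abelian then $\vn(G)$ is subhomogeneous, so by Proposition \ref{p_flipeh} the flip $\frak{f}$ extends to a completely bounded map on $\vn(G)\otimes_{\eh}\vn(G) = \ev(G)$; this map is weak* continuous (being, on the weak* dense subspace $\vn(G)\odot\vn(G)$, the adjoint of $\sigma$, and a cb extension of a weak* continuous densely defined map on a dual operator space is weak* continuous), hence it is the adjoint of a completely bounded map on $\ahg$, which on $A(G)\odot A(G)$ must coincide with $\sigma$ by the duality computed above. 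For (ii)$\Rightarrow$(i): if $\sigma$ extends completely boundedly to $\ahg$, its adjoint is a completely bounded weak* continuous map on $\ev(G)$ agreeing with $\frak{f}$ on $\vn(G)\odot\vn(G)$, hence $\frak{f}$ extends completely boundedly to $\ev(G) = \vn(G)\otimes_{\eh}\vn(G)$; by Proposition \ref{p_flipeh}, $\vn(G)$ is subhomogeneous, so $G$ is virtually abelian.

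\textbf{Main obstacle.} The routine part is the identification of $\sigma$ with the pre-adjoint of $\frak{f}$ on the algebraic tensor products; the genuinely delicate point is the weak*-continuity bookkeeping needed to pass between ``$\frak{f}$ extends cb on $\vn(G)\otimes_{\eh}\vn(G)$'' and ``$\sigma$ extends cb on $\ahg$'', i.e.\ checking that a cb extension of the weak* densely defined $\frak{f}$ is automatically weak* continuous (so that it is an adjoint map) and that its pre-adjoint restricted to $A(G)\odot A(G)$ really is $\sigma$ and not some other extension. One should also double-check that $\vn(G)$ being subhomogeneous is exactly the hypothesis ``$\cl M$ subhomogeneous'' of Proposition \ref{p_flipeh} and not merely implied by it; this is where the group-theoretic input (uniform bound on dimensions of irreducibles $\Leftrightarrow$ virtually abelian) is used, and it is worth stating the cited result precisely.
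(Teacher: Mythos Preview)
Your proposal is correct and follows essentially the same route as the paper: reduce to Proposition~\ref{p_flipeh} with $\cl M=\vn(G)$, and pass between $\sigma$ on $\ahg$ and $\frak f$ on $\ev(G)$ by duality. Two small points of difference worth noting: for the weak* continuity of $\frak f$ in (i)$\Rightarrow$(ii), the paper does not invoke a general principle but checks directly that if $(u_i)_i$ is a \emph{bounded} net converging weak* to $u$ then $\langle\frak f(u_i),\phi\otimes\psi\rangle\to\langle\frak f(u),\phi\otimes\psi\rangle$ and extends by density (this is the Krein--Smulian step you flagged as the obstacle); and for (ii)$\Rightarrow$(i), the paper attributes the implication ``$\vn(G)$ subhomogeneous $\Rightarrow$ $G$ virtually abelian'' to the proof of \cite[Proposition~1.5]{fr} rather than to \cite{moore}, since Moore's theorems alone do not immediately give this direction.
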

\begin{proof}
(i)$\Rightarrow$(ii) By \cite{moore}, the unitary representations of $G$ have uniformly bounded 
dimension, and hence $C_r^*(G)$ is subhomogeneous. This easily implies that 
$\vn(G)$ is subhomogeneous and, by Proposition \ref{p_flipeh},
the flip extends to a completely bounded map $\frak{f}$ on $\vn_{\eh}(G)$. 
Note that $\frak{f}$ is weak* continuous; indeed, suppose that $(u_i)_i$ is a bounded net that converges to an 
element $u\in \vn_{\eh}(G)$ in the weak* topology. For $\phi,\psi\in A(G)$ we then have
$$\langle \frak{f}(u_i),\phi\otimes\psi\rangle = \langle u_i,\psi\otimes\phi\rangle \to 
\langle u,\psi\otimes\phi\rangle = \langle \frak{f}(u),\phi\otimes\psi\rangle;$$
by the uniform boundedness of $(u_i)_i$ and the density of the algebraic tensor product $A(G)\odot A(G)$ in $\ahg$, 
we have that $\frak{f}(u_i)\to \frak{f}(u)$ in the weak* topology. It follows that the map $\frak{f}$ has a 
completely bounded predual; a straightforward argument shows that this predual coincides with $\sigma$ on 
$A(G)\odot A(G)$.

(ii)$\Rightarrow$(i) The dual of the map $\sigma$ is easily seen to coincide with the flip on the algebraic tensor product 
$\vn(G)\odot \vn(G)$. By Proposition \ref{p_flipeh}, $\vn(G)$ is subhomogeneous. By 
the proof of \cite[Proposition 1.5]{fr}, $G$ is virtually abelian. 
\end{proof}

\begin{corollary}\label{c_flipss}
Let $G$ be a virtually abelian locally compact group and let  $E$ be a closed subset of $G \times G$
that satisfies spectral synthesis in $A_{\rm h}(G)$. Then the set 
$\tilde E:=\{(s,t): (t,s) \in E\}$ satisfies spectral synthesis in $A_{\rm h}(G)$.
\end{corollary}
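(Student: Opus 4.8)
The plan is to transport the spectral synthesis condition for $E$ across the flip homeomorphism, using the fact (Theorem \ref{th_vaf}) that for virtually abelian $G$ the flip $\sigma$ is a completely bounded linear automorphism of $\ahg = A(G)\otimes_{\rm h}A(G)$. Write $\theta : G\times G \to G\times G$ for the homeomorphism $(s,t)\mapsto(t,s)$, so that $\tilde E = \theta^{-1}(E) = \theta(E)$, and note that $\sigma$ is, as a map of functions on $G\times G$, precisely composition with $\theta$: for $w\in\ahg$ we have $\sigma(w)(s,t) = w(t,s) = (w\circ\theta)(s,t)$. This identity holds on the dense subalgebra $A(G)\odot A(G)$ by inspection and extends to all of $\ahg$ by continuity of $\sigma$ together with the fact that $\|\cdot\|_{\rm h}$ dominates the uniform norm (as recorded in \eqref{eq_infh} and used repeatedly, e.g. in Lemma \ref{l_newinde}). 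In particular $\sigma$ is an algebra automorphism of $\ahg$ (it is a composition operator with a homeomorphism), it is invertible with $\sigma^{-1} = \sigma$, and both $\sigma$ and $\sigma^{-1}$ are completely bounded, hence bounded.

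First I would record the behaviour of $\sigma$ on the relevant ideals. Since $\sigma$ is a bounded algebra automorphism with $\sigma(w)(s,t) = w(t,s)$, one checks directly that $\sigma\big(I_{\ahg}(E)\big) = I_{\ahg}(\tilde E)$: if $w$ vanishes on $E$ then $\sigma(w)$ vanishes on $\theta^{-1}(E) = \tilde E$, and conversely applying $\sigma$ again. Likewise, because $\theta$ is a homeomorphism it carries compact sets to compact sets and preserves disjointness, so a function with compact support disjoint from $E$ is sent by $\sigma$ to a function with compact support disjoint from $\tilde E$; combined with the boundedness of $\sigma$ (so that it commutes with norm-closures) this gives $\sigma\big(J_{\ahg}(E)\big) = J_{\ahg}(\tilde E)$. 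The same two statements hold with $\sigma^{-1} = \sigma$ in place of $\sigma$, so these are genuine equalities of closed ideals, not merely inclusions.

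Now the corollary follows by a one-line argument. Suppose $E$ satisfies spectral synthesis in $\ahg$, i.e. $I_{\ahg}(E) = J_{\ahg}(E)$. Applying the bijection $\sigma$ to both sides and using the two identities from the previous paragraph gives $I_{\ahg}(\tilde E) = \sigma\big(I_{\ahg}(E)\big) = \sigma\big(J_{\ahg}(E)\big) = J_{\ahg}(\tilde E)$, which is exactly spectral synthesis for $\tilde E$. There is no real obstacle here once Theorem \ref{th_vaf} is in hand; the only point requiring a little care is the verification that $\sigma$, as an abstract map defined on elementary tensors, really does act as the composition operator $w\mapsto w\circ\theta$ on all of $\ahg$ — this is where one invokes that $A(G)\odot A(G)$ is dense and that the Haagerup norm controls the sup norm, so that pointwise values are continuous functionals. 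Everything else is the standard bookkeeping that an algebra isomorphism induced by a homeomorphism of the Gelfand spectrum preserves the hull–kernel and support structure, and hence preserves spectral synthesis.
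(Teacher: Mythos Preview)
Your proof is correct and follows essentially the same approach as the paper: invoke Theorem~\ref{th_vaf} to obtain the bounded flip $\sigma$ on $\ahg$, observe that $\sigma(w)(s,t)=w(t,s)$, and deduce that $\sigma$ carries $I_{\ahg}(E)$ onto $I_{\ahg}(\tilde E)$ and $J_{\ahg}(E)$ onto $J_{\ahg}(\tilde E)$. You have simply fleshed out the details (density of $A(G)\odot A(G)$, control of sup norm by the Haagerup norm, behaviour of supports under the homeomorphism $\theta$) that the paper leaves implicit.
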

\begin{proof}
By Theorem \ref{th_vaf}, the flip $\sigma$ extends to a completely bounded map on $\ahg$. 
It is easy to see that, if $w\in \ahg$ then $\sigma(w)(s,t) = w(t,s)$, $s,t\in G$.
Thus, $\sigma$ carries $I_{\ahg}(E)$ (resp. $J_{\ahg}(E)$) onto $I_{\ahg}(\tilde{E})$
(resp. $J_{\ahg}(\tilde{E})$). The claim is now clear. 
\end{proof}

It is well-known that the linear map $S : A(G)\to A(G)$ given by $S(u)(s) = u(s^{-1})$,
is an isometry, and that it is completely bounded if and only if $G$ is
virtually abelian \cite[Proposition~1.5]{fr}.
The adjoint $S^* : \vn(G)\to \vn(G)$ of $S$ is given by
$S^*(\lambda_s)= \lambda_{s^{-1}}$; clearly, $S^*$ is weak* continuous.

Suppose that $G$ is a virtually abelian group.
Let $N : A(G) \rightarrow M\ahg$ be the map given by
\begin{equation}\label{eq:N}
N(a)(s,t) = a(st^{-1}), \ \ \ s,t\in G,
\end{equation}
that is, $N(a) = (\id \otimes S)\circ m_*(a)$.
If $v \in \ahg$ and $a\in A(G)$ then, by Theorem \ref{th_daws},
\[
\|N(a) v \|_{\hh} = \|(\id \otimes S)\left(m_* (a)(v)\right)\|_{\hh}
\leq   \|S\|_{\rm cb} \|a\|_{A} \|v\|_{\hh};
\]
thus, $N$ is bounded.
One can modify the proof of Theorem~\ref{t:Moore-groups}
where the function $w^{\pi}$ defined in (\ref{eq:w-pi}) is replaced by the function
\[
(s,t)\to \int_G w(sr,tr) \pi(r) dr,
\]
a similar change is implemented in (\ref{eq:f.w}), and
where the map $m_*$ is replaced by the map $N$.
The modified proof shows that if $E$ is  a set of spectral synthesis for $A(G)$
then  
$$E^{\flat} = \{(s,t) \in G \times G: \ st^{-1} \in E\}$$ 
is a set of spectral synthesis for $\ahg$.

Similarly, the proof of Theorem~\ref{t_spectral-of-E-sharp} can be modified by using the map $\hat{\Gamma}$
given by
\[
\hat{\Gamma}(v)(t) = \int_G v(ts,s) ds
\]
(note that $\hat{\Gamma}(a\otimes b) = S(b\ast S(a))$).
Thus instead of (\ref{eq:using-bai-on-w}), one can show that
$\|\hat{\Gamma}(w \otimes e_\alpha) - w\|_{A} \rightarrow_{\alpha} 0$.
We also have $\hat{\Gamma}(N(f) v) =   f\hat{\Gamma}(v)$.
Working with $N$ in the place of $m_*$, a modification of the proof of Theorem \ref{t_spectral-of-E-sharp}
shows that if $E^{\flat}$ is a set of spectral synthesis for $\ahg$ then
$E$ is a set of spectral synthesis for $A(G)$.

Let 
$$E^* = \{(s,t)\in G\times G : ts^{-1} \in E\}.$$
Combining the observations in the last two paragraphs with Corollary \ref{c_flipss},
we obtain the following corollary.

\begin{corollary}\label{c:generalizing-Nico}
Let $G$ be a virtually abelian group and $E \subseteq G$ be a closed set.
The following conditions are equivalent:

(i) \ \ $E$ is a set of spectral synthesis for $A(G)$;

(ii) \ $E^\sharp$ is a set of spectral synthesis for $\ahg$;

(iii) $E^*$ is a set of spectral synthesis for $\ahg$.
\end{corollary}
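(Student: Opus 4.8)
The plan is to assemble the statement from three equivalences, the first and third of which are already at hand, while the second is obtained through the modifications of earlier proofs indicated in the two paragraphs preceding the corollary.

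Since every virtually abelian group is a Moore group, Corollary~\ref{c:moore-antidiagonal} gives immediately the equivalence (i)$\Leftrightarrow$(ii). Next I would establish that (i) is equivalent to the assertion that $E^{\flat}=\{(s,t):st^{-1}\in E\}$ is a set of spectral synthesis for $\ahg$. For the implication from (i) to this assertion, one runs the argument of Theorem~\ref{t:Moore-groups} with $w^{\pi}(s,t)=\int_G w(sr,tr)\pi(r)\,dr$ in place of the function defined in (\ref{eq:w-pi}), with the corresponding change in (\ref{eq:f.w}), and with the map $N$ of (\ref{eq:N}) in place of $m_*$; since $G$ is virtually abelian the map $S$ is completely bounded, so $N=(\id\otimes S)\circ m_*$ takes values in $M^{\cb}\ahg$ by Theorem~\ref{th_daws}, and the proof goes through after replacing Lemmas~\ref{l_newinde}--\ref{l_star} by their analogues adapted to the ``$st^{-1}$'' geometry. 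For the reverse implication one modifies the proof of Theorem~\ref{t_spectral-of-E-sharp}, replacing $\Gamma_{a_K}$ by the averaging map $\hat{\Gamma}(v)(t)=\int_G v(ts,s)\,ds$, which satisfies $\hat{\Gamma}(a\otimes b)=S(b\ast S(a))$, $\hat{\Gamma}(N(f)v)=f\hat{\Gamma}(v)$, and $\|\hat{\Gamma}(w\otimes e_{\alpha})-w\|_A\to_{\alpha}0$, the last of these replacing (\ref{eq:using-bai-on-w}).

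Finally I would invoke Corollary~\ref{c_flipss}: by Theorem~\ref{th_vaf} the flip $\sigma$ extends to a completely bounded map on $\ahg$ with $\sigma(w)(s,t)=w(t,s)$, and it therefore carries $I_{\ahg}(F)$ onto $I_{\ahg}(\widetilde F)$ and $J_{\ahg}(F)$ onto $J_{\ahg}(\widetilde F)$ for every closed $F\subseteq G\times G$, where $\widetilde F=\{(s,t):(t,s)\in F\}$; hence $F$ is a set of spectral synthesis for $\ahg$ if and only if $\widetilde F$ is. Since $\widetilde{E^{\flat}}=E^*$, this gives the equivalence of (iii) with the assertion that $E^{\flat}$ is a set of spectral synthesis for $\ahg$, and chaining the three equivalences yields the corollary. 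The one part that is not pure bookkeeping is the second step: one must verify that $N$ genuinely inherits the properties of $m_*$ used in the two proofs and that the $st^{-1}$-analogues of Lemmas~\ref{l_newinde}--\ref{l_star} hold; this uses the unimodularity of $G$ (which holds since $G$ is Moore) together with the complete boundedness of $S$, and is routine but should be spelled out.
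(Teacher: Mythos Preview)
Your proposal is correct and follows essentially the same approach as the paper: the paper's proof is the single sentence ``Combining the observations in the last two paragraphs with Corollary~\ref{c_flipss}, we obtain the following corollary,'' and you have accurately unpacked those observations---the $E^{\flat}$ modifications of Theorems~\ref{t_spectral-of-E-sharp} and~\ref{t:Moore-groups} via $N$ and $\hat{\Gamma}$, plus the flip to pass from $E^{\flat}$ to $E^{*}$---together with the appeal to Corollary~\ref{c:moore-antidiagonal} for (i)$\Leftrightarrow$(ii). Your closing remark that the $st^{-1}$-analogues need to be checked is fair, but this is exactly the content the paper leaves to the reader in the two paragraphs you cite.
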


We summarise some implications of 
Corollary~\ref{c:generalizing-Nico} and  \cite[Proposition~3.1]{rs} in the next remark.

\begin{remark}\label{r_Nico-extension}
Let $G$ be a virtually abelian compact group. Then the following are equivalent:

(i) \ \ $E$ is a set of spectral synthesis for $A(G)$;

(ii) \ $E^\sharp$ is a set of spectral synthesis for $C(G) \otimes_\gamma C(G)$;

(iii) $E^*$ is a set of spectral synthesis for $A(G\times G)$;

(iv)\ $E^*$ is a set of spectral synthesis for $\ahg$;

(v)\ \ $E^\sharp$ is a set of spectral synthesis for $\ahg$.
\end{remark}


\section{$\vn(G)'$-bimodule maps and supports}\label{s_ibm}

This section is centred around the
correspondence between the elements of the extended Haagerup tensor product
$\ev(G) = \vn(G)\otimes_{\eh} \vn(G)$ and the completely bounded
weak* continuous $\vn(G)'$-module maps on $\cl B(L^2(G))$.
We assume throughout the section that $G$ is a second countable locally compact group,
and, in what follows,
relate the support of an element $u\in \ev(G)$ to certain
invariant subspaces of the completely bounded map corresponding to $u$
(see Theorem \ref{th_invars} and Corollary \ref{c_no1}).
Further, we characterise the elements $u\in \ev(G)$
supported on the antidiagonal as those, for which the corresponding
completely bounded map leaves the multiplication masa of $L^{\infty}(G)$ invariant
(Theorem \ref{th_antid}).
It is well-known 
(see \cite{neufang_thesis} and \cite{nrs}) that the latter class consists precisely of the maps of the form
$\Theta(\mu)$ with $\mu\in M(G)$, where
$$\Theta(\mu)(T) = \int_G\lambda_s T\lambda_s^* d\mu(s), \ \ \ T\in \cl B(L^2(G)).$$

Note that, since $\lambda(L^1(G))$ is a (weak* dense) subspace of $\vn(G)$,
the algebraic tensor product
$\lambda(L^1(G))\odot \lambda(L^1(G))$ sits naturally inside $\ev(G)$.
We refer the reader to (\ref{eq_Phi_u}) for the definition of the map $\Phi_u$
associated with an element $u$ of $\ev(G)$.

\begin{lemma}\label{l_appr}
Let $u\in \ev(G)$. Then there exists a net $(u_{\alpha})_{\alpha}\subseteq \lambda(L^1(G))\odot \lambda(L^1(G))$
such that

(i) \ \ $\|u_{\alpha}\|_{\eh}\leq \|u\|_{\eh}$ for all $\alpha$,

(ii) \ $u_{\alpha}\to u$ in the weak* topology of $\ev(G)$, and

(iii) $\Phi_{u_{\alpha}}(x)\to_\alpha \Phi_u(x)$ in the weak* topology of $\cl B(L^2(G))$,
for every $x\in \cl B(L^2(G))$.
\end{lemma}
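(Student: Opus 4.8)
The plan is to reduce the statement to finite sums, to approximate finite sums inside $\lambda(L^1(G))\odot\lambda(L^1(G))$ by a matricial version of Kaplansky's density theorem (keeping the relevant norms under control), and then to splice everything into a single net by an iterated-limit argument. First fix a w*-representation $u=\sum_{i=1}^{\infty}a_i\otimes b_i$ realising the norm, i.e. with $\|\sum_i a_ia_i^*\|^{1/2}\|\sum_i b_i^*b_i\|^{1/2}=\|u\|_{\eh}$ (such a representation exists; see \cite{bs}), and put $u^{(n)}=\sum_{i=1}^{n}a_i\otimes b_i\in\vn(G)\odot\vn(G)$. Since the partial sums of $\sum_i a_ia_i^*$ increase to $\sum_i a_ia_i^*$, one has $0\le\sum_{i\le n}a_ia_i^*\le\sum_i a_ia_i^*$ (and similarly for the $b_i$), hence $\|u^{(n)}\|_{\eh}\le\|u\|_{\eh}$; moreover $u^{(n)}\to u$ in the weak* topology of $\ev(G)$ by the very meaning of a w*-representation. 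Writing $\Phi_u^*$ for the predual of the weak* continuous map $\Phi_u$, a short computation (factoring a trace class operator $\omega$ as a product of two Hilbert--Schmidt operators) gives $\Phi_u^*(\omega)=\sum_i b_i\omega a_i$ with convergence in trace norm, so $\Phi_{u^{(n)}}^*(\omega)\to\Phi_u^*(\omega)$ in trace norm and therefore $\Phi_{u^{(n)}}(x)\to\Phi_u(x)$ weak* in $\cl B(L^2(G))$ for every $x$.

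Next I would approximate each $u^{(n)}$. Fix $n$ and regard the row $[a_1,\dots,a_n]$ as the first row of a matrix $R\in\M_n(\vn(G))$ with all other entries $0$, so that $\|R\|=\|\sum_{i\le n}a_ia_i^*\|^{1/2}$; similarly let $C\in\M_n(\vn(G))$ have first column $[b_1;\dots;b_n]$, so $\|C\|=\|\sum_{i\le n}b_i^*b_i\|^{1/2}$. Since $\lambda(L^1(G))$ is a $*$-subalgebra of $\cl B(L^2(G))$ that is weak* dense in $\vn(G)$, the $*$-subalgebra $\M_n(\lambda(L^1(G)))$ is weak* dense in $\M_n(\vn(G))$, and Kaplansky's density theorem provides nets in $\M_n(\lambda(L^1(G)))$ of norm at most $\|R\|$ (resp. $\|C\|$) converging $\sigma$-strong$^*$ to $R$ (resp. $C$). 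Reading off the first row (resp. column) yields $\lambda(f_1^{\beta}),\dots,\lambda(f_n^{\beta})\in\lambda(L^1(G))$ with $\|\sum_i\lambda(f_i^{\beta})\lambda(f_i^{\beta})^*\|\le\|R\|^2$ and $\lambda(f_i^{\beta})\to a_i$ $\sigma$-strong$^*$, and $\lambda(g_1^{\gamma}),\dots,\lambda(g_n^{\gamma})\in\lambda(L^1(G))$ with $\|\sum_i\lambda(g_i^{\gamma})^*\lambda(g_i^{\gamma})\|\le\|C\|^2$ and $\lambda(g_i^{\gamma})\to b_i$ $\sigma$-strong$^*$. Setting $u^{(n)}_{\beta,\gamma}=\sum_{i=1}^{n}\lambda(f_i^{\beta})\otimes\lambda(g_i^{\gamma})\in\lambda(L^1(G))\odot\lambda(L^1(G))$, one gets $\|u^{(n)}_{\beta,\gamma}\|_{\eh}\le\|R\|\,\|C\|\le\|u\|_{\eh}$; pairing against elementary tensors $\phi\otimes\psi\in A(G)\odot A(G)$ and using that this net is $\eh$-bounded shows $u^{(n)}_{\beta,\gamma}\to u^{(n)}$ weak* in $\ev(G)$; and since multiplication is jointly $\sigma$-strong$^*$ continuous on bounded sets, $\Phi_{u^{(n)}_{\beta,\gamma}}(x)=\sum_i\lambda(f_i^{\beta})\,x\,\lambda(g_i^{\gamma})\to\sum_i a_ixb_i=\Phi_{u^{(n)}}(x)$ $\sigma$-strong$^*$, hence weak*, for every $x$.

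Finally I would assemble one net. Work in the topological space $Y=\ev(G)\times\prod_{x\in\cl B(L^2(G))}\cl B(L^2(G))$, with $\ev(G)$ and each factor $\cl B(L^2(G))$ carrying their weak* topologies, and with the map $\Psi(v)=(v,(\Phi_v(x))_x)$. The first two steps say that $\Psi(u^{(n)}_{\beta,\gamma})\to\Psi(u^{(n)})$ for each fixed $n$ and $\Psi(u^{(n)})\to\Psi(u)$, so $\Psi(u)$ lies in the closure in $Y$ of $\{\Psi(u^{(n)}_{\beta,\gamma})\}$; since closures in a topological space are realised by nets, there is a net $(u_\alpha)_\alpha$ selected from the $u^{(n)}_{\beta,\gamma}$ with $\Psi(u_\alpha)\to\Psi(u)$, and this is exactly the conjunction of (i), (ii) and (iii). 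The main obstacle is the approximation step: one must replace the $a_i$ and $b_i$ by elements of $\lambda(L^1(G))$ \emph{without increasing the row and column norms} $\|\sum_i a_ia_i^*\|^{1/2}$ and $\|\sum_i b_i^*b_i\|^{1/2}$, which is why Kaplansky's theorem is applied to $\M_n(\lambda(L^1(G)))$ rather than to $\lambda(L^1(G))$ itself, the relevant rows and columns being isometric corners of $\M_n(\vn(G))$; a secondary (purely bookkeeping) point is the passage in the last step from the double family $(u^{(n)}_{\beta,\gamma})$ to a genuine net.
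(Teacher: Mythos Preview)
Your proof is correct and uses essentially the same engine as the paper's: a w*-representation of $u$ realising $\|u\|_{\eh}$, followed by Kaplansky's density theorem applied to matrices over $\lambda(L^1(G))$ in order to approximate the row $(a_i)$ and the column $(b_i)$ without increasing their operator norms, with strong$^*$ convergence giving (iii) and a density argument on $A(G)\odot A(G)$ giving (ii).

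The only organisational difference is that the paper does it in a single step. Rather than first truncating to $u^{(n)}$ and then approximating each $u^{(n)}$ in $\M_n(\lambda(L^1(G)))$, the paper views the full infinite row $A=(a_i)_{i\in\bb N}$ and column $B=(b_i)_{i\in\bb N}$ as elements of $\cl B(\ell^2)\bar\otimes\vn(G)$, applies Kaplansky once to the weak*-dense $*$-subalgebra $\cl F(\ell^2)\odot\lambda(L^1(G))$ (finite rank in the first leg automatically forces finitely many nonzero entries), and then compresses to the first row/column. This yields a single net $u_\alpha=A_\alpha\odot B_\alpha\in\lambda(L^1(G))\odot\lambda(L^1(G))$ directly, so no iterated-limit bookkeeping is needed; (iii) then follows from the identity $\Phi_{u_\alpha}(x)=A_\alpha(1\otimes x)B_\alpha$ and joint strong$^*$ continuity of multiplication on bounded sets. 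Your two-stage version is equally valid and perhaps more elementary (it only needs Kaplansky for finite matrix algebras), at the cost of the extra splicing argument at the end.
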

\begin{proof}
Suppose that $u = \sum_{i=1}^{\infty}  a_i\otimes b_i$ is a w*-representation of $u$
with the property that
\[
\|u\|_{\eh} = \left\|\sum_{i=1}^{\infty}  a_ia_i^*\right\|^{\frac{1}{2}} \left\|\sum_{i=1}^{\infty}  b_i^*b_i\right\|^{\frac{1}{2}}.
\]
Let $\cl F(\ell^2)$ be the algebra of all operators of finite rank on $\ell^2$.
Then the algebraic tensor product $\cl F(\ell^2)\odot \lambda(L^1(G))$ is a
weak* dense *-subalgebra of the von Neumann algebra $\cl B(\ell^2)\bar{\otimes} \vn(G)$.
Realise $\cl B(\ell^2)\bar{\otimes} \vn(G)$ as a space of matrices (of infinite size)
with entries in $\vn(G)$, and note that, since
$\sum_{i=1}^{\infty} a_i a_i^*$
(resp. $\sum_{i=1}^{\infty} b_i^* b_i$)
is weak* convergent, $(a_i)_{i=1}^{\infty}$ (resp. $(b_i)_{i=1}^{\infty}$) can be viewed as
an element $A$ (resp. $B$) of $\cl B(\ell^2)\bar{\otimes} \vn(G)$ supported by the first row
(resp. by the first column).
By the Kaplansky Density Theorem, there exist nets
$(\tilde{A}_{\alpha})_{\alpha}$ and $(\tilde{B}_{\alpha})_{\alpha}$ in $\cl F(\ell^2)\odot \lambda(L^1(G))$
such that $\|\tilde{A}_{\alpha}\|\leq \|A\|$, $\|\tilde{B}_{\alpha}\|\leq \|B\|$ for all $\alpha$, and
$$\tilde{A}_{\alpha}\to\mbox{}_{\alpha} \ A, \ \ \tilde{B}_{\alpha}\to\mbox{}_{\alpha} \ B$$
in the strong* topology.
Let $A_{\alpha}$ (resp. $B_{\alpha}$) be the compression of $\tilde{A}_{\alpha}$
(resp. $\tilde{B}_{\alpha}$) to the first row (resp. column).
Then $\|A_{\alpha}\|\leq \|A\|$, $\|B_{\alpha}\|\leq \|B\|$ for all $\alpha$, and
$$A_{\alpha}\to\mbox{}_{\alpha} \ A, \ \ B_{\alpha}\to\mbox{}_{\alpha} \ B$$
in the strong* topology. Let $u_{\alpha} = A_{\alpha}\odot B_{\alpha}$;
then $u_{\alpha}\in \lambda(L^1(G))\odot \lambda(L^1(G))$ and
\begin{equation}\label{eq_lessu}
\|u_{\alpha}\|_{\eh}\leq \|A_{\alpha}\|\|B_{\alpha}\| \leq \|A\|\|B\| = \|u\|_{\eh}
\end{equation}
for all $\alpha$.
If $x\in \cl B(L^2(G))$ then
$$\Phi_{u_{\alpha}}(x) = A_{\alpha}(1\otimes x) B_{\alpha} \to\mbox{}_{\alpha} \
A(1\otimes x)B$$ in the weak operator topology.
By (\ref{eq_lessu}),
$$\|\Phi_{u_{\alpha}}(x)\|\leq \|u_{\alpha}\|_{\eh}\|x\|\leq \|u\|_{\eh}\|x\|$$ for all $\alpha$,
and hence $\Phi_{u_{\alpha}}(x)\to \Phi_u(x)$ in the weak* topology.

It remains to show that $u_{\alpha}\to u$ in the weak* topology of $\ev(G)$.
Let $\phi, \psi\in A(G)$, viewed as (weak* continuous) functionals on $\vn(G)$,
and let $\xi,\xi',\eta,\eta'\in L^2(G)$ be such that
$\phi(s) = (\lambda_s\xi,\eta)$ and $\psi(s) = (\lambda_s\xi',\eta')$, $s\in G$.
Write
$A_{\alpha} = (a_i^{\alpha})_{i=1}^{\infty}$ and $B_{\alpha} = (b_i^{\alpha})_{i=1}^{\infty}$.
Then, as pointed out in \cite{er},
$$\langle A\odot B, \phi\otimes \psi\rangle = \sum_{i=1}^{\infty}  \langle a_i,\phi\rangle \langle b_i,\psi\rangle
\mbox{ and }
\langle A_{\alpha}\odot B_{\alpha}, \phi\otimes \psi\rangle =
\sum_{i=1}^{\infty}  \langle a_i^{\alpha},\phi\rangle \langle b_i^{\alpha},\psi\rangle.$$
Thus, using (\ref{eq_vndu}), we obtain
\begin{eqnarray*}
& &
|\langle A_{\alpha}\odot B_{\alpha}, \phi\otimes \psi\rangle - \langle A\odot B, \phi\otimes \psi\rangle|\\
& \leq &
\sum_{i=1}^{\infty}  \left|(a_i^{\alpha}\xi,\eta)(b_i^{\alpha}\xi',\eta') -
(a_i\xi,\eta)(b_i\xi',\eta')\right|\\
& \leq &
\sum_{i=1}^{\infty}  \left|(a_i^{\alpha}\xi,\eta)(b_i^{\alpha}\xi',\eta') -
(a_i^{\alpha}\xi,\eta)(b_i\xi',\eta')\right|\\
& + &
\sum_{i=1}^{\infty}  \left|(a_i^{\alpha}\xi,\eta)(b_i\xi',\eta') -
(a_i\xi,\eta)(b_i\xi',\eta')\right|\\
& = &
\sum_{i=1}^{\infty}  |(a_i^{\alpha}\xi,\eta)||(b_i^{\alpha}\xi',\eta') - (b_i\xi',\eta')|
+
\sum_{i=1}^{\infty}  |(b_i\xi',\eta')| |(a_i^{\alpha}\xi,\eta) - (a_i\xi,\eta)|\\
& \leq &
\|\eta\|\|\eta'\| \sum_{i =1}^{\infty}  \|a_i^{\alpha}\xi\|\|b_i^{\alpha}\xi' - b_i\xi'\|
+ \|\eta\|\|\eta'\|\sum_{i =1}^{\infty}  \|b_i\xi'\| \|a_i^{\alpha}\xi - a_i\xi\|\\
& \leq &
\|\eta\|\|\eta'\| \left(\sum_{i =1}^{\infty}  \|a_i^{\alpha}\xi\|^2\right)^{1/2}
\left(\sum_{i=1}^{\infty}  \|b_i^{\alpha}\xi' - b_i\xi'\|^2\right)^{1/2}\\
& + &
\|\eta\|\|\eta'\| \left(\sum_{i=1}^{\infty}  \|b_i\xi'\|^2\right)^{1/2}
\left(\sum_{i=1}^{\infty}  \|a_i^{\alpha}\xi - a_i\xi\|^2\right)^{1/2}\\
& = &
\|\eta\|\|\eta'\| \|A_{\alpha}\xi\| \|B_{\alpha}\xi' - B\xi'\|
+ \|\eta\|\|\eta'\| \|B\xi'\| \|A_{\alpha}\xi - A\xi\|.
\end{eqnarray*}
It follows that
$$\langle A_{\alpha}\odot B_{\alpha}, \phi\otimes \psi\rangle - \langle A\odot B, \phi\otimes \psi\rangle
\to\mbox{}_{\alpha} \ 0.$$
Since $A(G)\odot A(G)$ is dense in $\ahg$ and
the family $(A_{\alpha}\odot B_{\alpha})_{\alpha}$ of functionals
on $\ahg$ is uniformly bounded,
we conclude that
$$\langle A_{\alpha}\odot B_{\alpha}, w\rangle - \langle A\odot B, w\rangle
\to\mbox{}_{\alpha} \ 0$$
for every $w\in \ahg$, that is,
$u_{\alpha}\to u$ in the weak* topology of $\ev(G)$.
\end{proof}

In the sequel, we write $\cl K = \cl K(L^2(G))$.

\begin{lemma}\label{l_compactco}
Let $(u_{\alpha})_{\alpha}\subseteq \ev(G)$ be a uniformly bounded net, converging in the
weak* topology to an element $u\in \ev(G)$. Then
$$(\Phi_{u_{\alpha}}(T)\xi,\eta)\to\mbox{}_{\alpha} (\Phi_{u}(T)\xi,\eta),$$
for all $T\in \cl K$ and all $\xi,\eta\in L^2(G)$.
\end{lemma}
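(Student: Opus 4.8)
The plan is to reduce to rank-one operators, then to identify $(\Phi_u(\xi_0\otimes\eta_0^*)\xi,\eta)$ with the pairing of $u$ against an elementary tensor in $A(G)\odot A(G)\subseteq\ahg$, and finally invoke the weak* convergence $u_\alpha\to u$ in $\ev(G)=\ahg^*$.

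\emph{Reduction to rank-one operators.} Since $u\mapsto\Phi_u$ is a complete isometry onto $CB^{w^*}_{\vn(G)',\vn(G)'}(\cl B(L^2(G)))$, the uniform boundedness hypothesis gives $\|\Phi_{u_\alpha}\|\leq\|\Phi_{u_\alpha}\|_{\cb}=\|u_\alpha\|_{\eh}\leq M$ for some $M>0$ (and the same bound for $\Phi_u$). If $F$ is a finite-rank operator then $|(\Phi_{u_\alpha}(T-F)\xi,\eta)|\leq M\|T-F\|\,\|\xi\|\,\|\eta\|$, and the analogous estimate holds for $\Phi_u$; since finite-rank operators are dense in $\cl K$, a standard $3\varepsilon$-argument reduces the claim to the case where $T$ is finite-rank, and then, by linearity in $T$, to $T=\xi_0\otimes\eta_0^*$ for arbitrary $\xi_0,\eta_0\in L^2(G)$.

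\emph{The rank-one computation.} Fix a w*-representation $u=\sum_{i=1}^\infty a_i\otimes b_i$, with $(a_i)_i,(b_i)_i\subseteq\vn(G)$ and $\sum_i a_ia_i^*$, $\sum_i b_i^*b_i$ weak* convergent. A direct calculation yields $a_i(\xi_0\otimes\eta_0^*)b_i=(a_i\xi_0)\otimes(b_i^*\eta_0)^*$, so, using \eqref{eq_Phi_u}, for all $\xi,\eta\in L^2(G)$ we obtain
$$(\Phi_u(\xi_0\otimes\eta_0^*)\xi,\eta)=\sum_{i=1}^\infty (a_i\xi_0,\eta)\,(b_i\xi,\eta_0),$$
the series converging absolutely by the Cauchy--Schwarz inequality, since $\sum_i\|a_i^*\eta\|^2=\sum_i(a_ia_i^*\eta,\eta)<\infty$ and $\sum_i\|b_i\xi\|^2=\sum_i(b_i^*b_i\xi,\xi)<\infty$. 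Let $\phi,\psi\in A(G)$ be the coefficient functions $\phi(s)=(\lambda_s\xi_0,\eta)$ and $\psi(s)=(\lambda_s\xi,\eta_0)$; by \eqref{eq_vndu} we have $\langle a_i,\phi\rangle=(a_i\xi_0,\eta)$ and $\langle b_i,\psi\rangle=(b_i\xi,\eta_0)$. As recorded in the proof of Lemma~\ref{l_appr} (following \cite{er}), $\langle u,\phi\otimes\psi\rangle=\sum_i\langle a_i,\phi\rangle\langle b_i,\psi\rangle$, and therefore
$$(\Phi_u(\xi_0\otimes\eta_0^*)\xi,\eta)=\langle u,\phi\otimes\psi\rangle.$$

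\emph{Conclusion.} The element $\phi\otimes\psi$ belongs to $A(G)\odot A(G)\subseteq\ahg$, and under \eqref{eq_dah} we have $\ev(G)=\ahg^*$; hence the weak* convergence $u_\alpha\to u$ gives $\langle u_\alpha,\phi\otimes\psi\rangle\to\langle u,\phi\otimes\psi\rangle$, that is, $(\Phi_{u_\alpha}(\xi_0\otimes\eta_0^*)\xi,\eta)\to(\Phi_u(\xi_0\otimes\eta_0^*)\xi,\eta)$. Together with the reduction step this proves the lemma. The only point requiring care is the bookkeeping in the rank-one identity: one must track the vectors and complex conjugates so that the factor $(b_i\xi,\eta_0)$ — rather than its conjugate — occurs and is realised as $\langle b_i,\psi\rangle$ for a genuine element $\psi\in A(G)$; the rest is routine.
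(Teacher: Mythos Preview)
Your proof is correct and follows essentially the same approach as the paper: both reduce to rank-one operators via a $3\varepsilon$-argument using the uniform bound $\|\Phi_{u_\alpha}\|\leq\|u_\alpha\|_{\eh}$, and both identify $(\Phi_u(\xi_0\otimes\eta_0^*)\xi,\eta)$ with the pairing $\langle u,\phi\otimes\psi\rangle$ for the coefficient functions $\phi(s)=(\lambda_s\xi_0,\eta)$, $\psi(s)=(\lambda_s\xi,\eta_0)$, then invoke weak* convergence. The only cosmetic difference is that the paper does the rank-one computation first and the density reduction last, whereas you reverse the order.
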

\begin{proof}
Fix $\xi,\eta\in L^2(G)$, and assume first that
$T = f\otimes g^*$ where $f,g\in L^2(G)$.
Let $\phi$ (resp. $\psi$) be the restriction of the vector functional
$\omega_{f,\eta}$ (resp. $\omega_{\xi,g}$) to
$\vn(G)$, viewed as an element of $A(G)$.
Suppose that $u = \sum_{i = 1}^{\infty}  a_i\otimes b_i$ is a w*-representation of $u$.
Then
\begin{eqnarray}\label{eq_ux}
(\Phi_{u}(T)\xi,\eta)
& = &
(\Phi_{u}(f\otimes g^*)\xi,\eta) = \sum_{i=1}^{\infty} (a_i(f\otimes g^*)b_i\xi,\eta)\nonumber\\
& = &
\sum_{i=1}^{\infty}  (((a_i f)\otimes (b_i^*g)^*)\xi,\eta)
= \sum_{i=1}^{\infty}  (\xi,b_i^*g) (a_i f,\eta)\\
& = &
\sum_{i=1}^{\infty}  (b_i\xi,g) (a_i f,\eta)
= \sum_{i=1}^{\infty}  \langle a_i,\phi\rangle \langle b_i,\psi\rangle
= \langle u,\phi\otimes\psi\rangle,\nonumber
\end{eqnarray}
where the last equality follows from \cite[(5.9)]{er}.
Since $u_{\alpha}\to_{\alpha} u$ in the weak* topology, we conclude that
$$(\Phi_{u_{\alpha}}(T)\xi,\eta) \to\mbox{}_{\alpha} (\Phi_{u}(T)\xi,\eta),$$
whenever $T$ has rank one. By linearity, the convergence holds
for any finite rank operator $T$.

Assume that $T\in \cl K$ is arbitrary and let $\epsilon > 0$.
Suppose that $C > 0$ is such that $\|u\|_{\eh}\leq C$ and $\|u_{\alpha}\|_{\eh}\leq C$ for all $\alpha$,
and choose a finite rank operator $T_0$ with
$\|T - T_0\| < \frac{\epsilon}{3C\|\xi\|\|\eta\|}$.
Let $\alpha_0$ be such that $|(\Phi_{u_{\alpha}}(T_0)\xi,\eta) - (\Phi_{u}(T_0)\xi,\eta)| < \frac{\epsilon}{3}$
for every $\alpha\geq \alpha_0$.
If $\alpha\geq \alpha_0$ then
\begin{eqnarray*}
& &
|(\Phi_{u_{\alpha}}(T)\xi,\eta) - (\Phi_{u}(T)\xi,\eta)|\\
& \leq &
|(\Phi_{u_{\alpha}}(T)\xi,\eta) - (\Phi_{u_{\alpha}}(T_0)\xi,\eta)|
 +
|(\Phi_{u_{\alpha}}(T_0)\xi,\eta) - (\Phi_{u}(T_0)\xi,\eta)|\\
&  + &
|(\Phi_{u}(T_0)\xi,\eta) - (\Phi_{u}(T)\xi,\eta)|
\leq
2C \|\xi\|\|\eta\| \|T - T_0\| + \epsilon/3  < \epsilon.
\end{eqnarray*}
\end{proof}

In the next statement, we use Proposition \ref{p_algtenin} to identify
$M^{\cb}A(G)\odot M^{\cb}A(G)$ with a subspace of $M^{\cb}\ahg$.

\begin{proposition}\label{p_meastp}
Let $\mu,\nu\in M(G)$ and $u = \lambda(\mu)\otimes\lambda(\nu)\in \ev(G)$.
For all $w \in \overline{M^{\cb}A(G)\odot M^{\cb}A(G)}^{\|\cdot\|_{\rm cbm}}$,
all $T\in \cl B(L^2(G))$ and all $\xi,\eta\in L^2(G)$,
the function $(s,t)\to w(s,t) (\lambda_s X \lambda_t\xi,\eta)$ is
$|\mu|\times|\nu|$-integrable and
\begin{equation}\label{eq_wdotu}
(\Phi_{w\cdot u}(T)\xi,\eta) = \int_{G\times G} w(t,s) (\lambda_t T \lambda_s\xi,\eta) d\mu(t)d\nu(s).
\end{equation}
\end{proposition}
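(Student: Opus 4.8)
The plan is to establish the formula first for $w\in M^{\cb}A(G)\odot M^{\cb}A(G)$ and then pass to the closure. So first I would take $w = f\otimes g$ with $f,g\in M^{\cb}A(G)$; by Lemma~\ref{l_elt} (applied to the restrictions $\psi_1,\psi_2$, or rather its evident analogue for multipliers, which follows by the same computation since $\frak{m}_f^*,\frak{m}_g^*$ act on $\vn(G)$), we have $w\cdot u = (f\cdot\lambda(\mu))\otimes(g\cdot\lambda(\nu))$. Recall $\frak{m}_f^*(\lambda(h)) = \lambda(fh)$ for $h\in L^1(G)$, and more generally $f\cdot\lambda(\mu) = \lambda(f\mu)$ where $f\mu$ denotes the measure with density $f$ against $\mu$ (this extends from $L^1$ by weak* continuity of $\frak{m}_f^*$ and density). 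Thus $w\cdot u = \lambda(f\mu)\otimes\lambda(g\nu)$. Now I would compute $(\Phi_{\lambda(\rho_1)\otimes\lambda(\rho_2)}(T)\xi,\eta)$ for general finite measures $\rho_1,\rho_2$: using the definition \eqref{eq_Phi_u} and a w*-representation, or more directly by noting that $\Phi_{\lambda(\rho_1)\otimes\lambda(\rho_2)}(T) = \lambda(\rho_1)T\lambda(\rho_2)$, and $\lambda(\rho_i)$ is the weak integral $\int_G\lambda_s\,d\rho_i(s)$. Expanding, $(\lambda(\rho_1)T\lambda(\rho_2)\xi,\eta) = \int_G\int_G (\lambda_t T\lambda_s\xi,\eta)\,d\rho_1(t)\,d\rho_2(s)$, with absolute convergence since $|(\lambda_t T\lambda_s\xi,\eta)|\le\|T\|\|\xi\|\|\eta\|$ and $\rho_1,\rho_2$ are finite. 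Substituting $\rho_1 = f\mu$, $\rho_2 = g\nu$ gives exactly \eqref{eq_wdotu} for $w = f\otimes g$, since $d(f\mu)(t)\,d(g\nu)(s) = f(t)g(s)\,d\mu(t)\,d\nu(s) = w(t,s)\,d\mu(t)\,d\nu(s)$. By bilinearity this extends to all $w\in M^{\cb}A(G)\odot M^{\cb}A(G)$.

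Next I would handle the limiting argument. Fix $w\in\overline{M^{\cb}A(G)\odot M^{\cb}A(G)}^{\|\cdot\|_{\rm cbm}}$ and choose a sequence $(w_n)\subseteq M^{\cb}A(G)\odot M^{\cb}A(G)$ with $\|w_n - w\|_{\rm cbm}\to 0$. By \eqref{eq_unif} this forces $w_n\to w$ uniformly on $G\times G$; in particular $w$ is a bounded continuous function, the integrand $(s,t)\mapsto w(t,s)(\lambda_t T\lambda_s\xi,\eta)$ is bounded and measurable, hence $|\mu|\times|\nu|$-integrable, and $\int_{G\times G} w_n(t,s)(\lambda_t T\lambda_s\xi,\eta)\,d\mu(t)d\nu(s) \to \int_{G\times G} w(t,s)(\lambda_t T\lambda_s\xi,\eta)\,d\mu(t)d\nu(s)$ by dominated convergence (dominating function $\sup_n\|w_n\|_{\rm cbm}\cdot\|T\|\|\xi\|\|\eta\|$, which is finite since $\|w_n\|_{\rm cbm}$ converges). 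On the other side, from \eqref{eq_iw} we have $\|w_n\cdot u - w\cdot u\|_{\eh}\le\|w_n - w\|_{\rm cbm}\|u\|_{\eh}\to 0$, so $\Phi_{w_n\cdot u}\to\Phi_{w\cdot u}$ in cb-norm (the map $v\mapsto\Phi_v$ being a complete isometry), whence $(\Phi_{w_n\cdot u}(T)\xi,\eta)\to(\Phi_{w\cdot u}(T)\xi,\eta)$. Matching the two limits yields \eqref{eq_wdotu} for the given $w$.

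The main obstacle I anticipate is the identification $f\cdot\lambda(\mu) = \lambda(f\mu)$ for $f\in M^{\cb}A(G)$ and $\mu\in M(G)$ — i.e. checking that the $M^{\cb}A(G)$-action on $\vn(G)$ sends $\lambda(\mu)$ to the left regular representation applied to the measure $f\mu$. For $f\in A(G)$ this is classical and for $\mu\in L^1(G)$ it is recorded in the preliminaries ($\frak{m}_\psi^*(\lambda(h)) = \lambda(\psi h)$); the general case follows by approximating $\mu$ weak* by elements of $L^1(G)$ (e.g. $e_\alpha*\mu$ for an approximate identity $(e_\alpha)$, which is norm-bounded and converges weak* in $\vn(G)$ after applying $\lambda$) and using weak* continuity of $\frak{m}_f^*$ together with the fact that $\lambda((fe_\alpha)*\mu')$-type terms converge appropriately; alternatively one checks the action directly on characters $\lambda_t$, where $f\cdot\lambda_t = f(t)\lambda_t$, and uses that $\{\lambda_t : t\in G\}$ is weak* total. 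A cleaner route, avoiding $f\mu$ altogether, is to keep $w\cdot u$ abstract and instead compute $(\Phi_{w\cdot u}(T)\xi,\eta) = \langle w\cdot u, \phi\otimes\psi\rangle$ via \eqref{eq_ux}-type identities, where $\phi = \omega_{T^*\eta\text{-ish}}$ — but since $T$ is a general bounded operator (not compact) the functional $\phi\otimes\psi$ need not be normal on $\vn(G)\bar\otimes\vn(G)$, so one must be careful; the measure-theoretic route above is more robust. Either way the analytic content is light once this identification is in hand.
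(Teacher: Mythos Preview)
Your proposal is correct and follows essentially the same route as the paper: establish \eqref{eq_wdotu} for elementary tensors $w=f\otimes g$ via the identification $w\cdot u=\lambda(f\mu)\otimes\lambda(g\nu)$ (which the paper also uses, citing Lemma~\ref{l_elt} with the same implicit extension to multipliers that you flag), extend by linearity, and then pass to the $\|\cdot\|_{\rm cbm}$-closure using \eqref{eq_iw} on the left side and dominated convergence with the pointwise bound \eqref{eq_unif} on the right. The only cosmetic difference is that the paper unpacks $(\lambda(f\mu)T\lambda(g\nu)\xi,\eta)$ via the pointwise identity $(T\lambda(g\nu)\xi)(r)=\int_G g(s)(T\lambda_s\xi)(r)\,d\nu(s)$, whereas you expand the weak integrals directly; the content is identical.
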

\begin{proof}
Fix $T\in \cl B(L^2(G))$ and $\xi,\eta\in L^2(G)$. For $w\in \cbmh$, set
$$\nph_w(t,s) = w(t,s) (\lambda_t T \lambda_s\xi,\eta), \ \ \ t,s\in G.$$
Using (\ref{eq_unif}), we have
\begin{eqnarray*}
\int_{G\times G} |\varphi_w| d(|\mu|\times |\nu|)
& \leq & \|w\|_{\rm cbm} \int_{G\times G} | (T \lambda_t\xi,\lambda_{s^{-1}}\eta)| d|\mu|(s) d|\nu|(t)\\
& \leq & \|w\|_{\rm cbm} \|T\| \int_{G\times G} \|\lambda_t\xi\| \|\lambda_{s^{-1}}\eta\|d|\mu|(s) d|\nu|(t)\\
& = & \|w\|_{\rm cbm} \|T\| \int_G \|\lambda_t\xi\|d|\nu|(t) \int_G\|\lambda_{s^{-1}}\eta\|d|\mu|(s) \\
& \leq & \|w\|_{\rm cbm} \|T\| \|\xi\|\|\eta\| |\mu|(G) |\nu|(G).
\end{eqnarray*}
Since $\mu$ and $\nu$ are complex measures, they have finite total variation, and hence
$\nph_w\in L^1(G\times G,|\mu|\times |\nu|)$.

Let $\phi,\psi\in M^{\cb}A(G)$ and $w = \phi\otimes\psi$.
For every $\zeta\in L^2(G)$ we have
\begin{eqnarray*}
\int_G (T\lambda(\psi\nu)\xi)(r)\overline{\zeta(r)}dr & = &
(T\lambda(\psi\nu)\xi,\zeta) = (\lambda(\psi\nu)\xi,T^*\zeta)\\
& = & \int_G (\psi(s)\lambda_s\xi, T^*\zeta) d\nu(s)\\
& = &  \int_G (\psi(s)T\lambda_s\xi, \zeta) d\nu(s)\\
& = & \int_G \int_G\psi(s)(T\lambda_s\xi)(r)\overline{\zeta(r)} dr d\nu(s)\\
& = & \int_G \left(\int_G\psi(s)(T\lambda_s\xi)(r)d\nu(s) \right) \overline{\zeta(r)} dr.
\end{eqnarray*}
It follows that
\begin{equation}\label{eq_xr}
(T\lambda(\psi\nu)\xi)(r) = \int_G \psi(s) (T\lambda_s\xi)(r) d\nu(s), \ \ \mbox{ for almost all } r\in G.
\end{equation}
By (\ref{eq_xr}) and Lemma \ref{l_elt},
\begin{eqnarray*}
(\Phi_{w\cdot u}(T)\xi,\eta)
& = & (\Phi_{\lambda(\phi\mu)\otimes \lambda(\psi\nu)}(T)\xi,\eta)
= (T \lambda(\psi\nu)\xi,\lambda(\phi\mu)^*\eta)\\
& = &
\int_G\int_{G\times G} \psi(s) (T\lambda_s\xi)(r)\phi(t)
\overline{(\lambda_{t^{-1}}\eta)(r)} d\nu(s)d\mu(t)dr\\
& = & \int_{G\times G} \nph_w(t,s)  d\mu(t)d\nu(s).
\end{eqnarray*}
By linearity, (\ref{eq_wdotu}) holds for every $w \in M^{\cb}A(G)\odot M^{\cb}A(G)$.

Now suppose that $w$ is in the closure of
$M^{\cb}A(G)\odot M^{\cb}A(G)$, and let
$(w_k)_{k\in \bb{N}}$ be a sequence in $M^{\cb}A(G)\odot M^{\cb}A(G)$ such that
$\|w_k - w\|_{\rm cbm} \to_{k\to \infty} 0$.
By Proposition \ref{p_cow}, $w_k\cdot u\to w\cdot u$ in the norm of $\ev(G)$;
thus, $\Phi_{w_k\cdot u}\to \Phi_{w\cdot u}$ in the completely bounded norm, and hence
\begin{equation}\label{eq_Phi}
(\Phi_{w_k\cdot u}(T)\xi,\eta) \to_{k\to \infty} (\Phi_{w\cdot u}(T)\xi,\eta).
\end{equation}

On the other hand, by (\ref{eq_unif}), $w_k \rightarrow w$ pointwise.
By the first paragraph of the proof,
the function $(t,s)\to (\lambda_t T \lambda_s\xi,\eta)$ is $|\mu|\times|\nu|$-integrable.
It follows that the functions $\nph_{w_k}$, $k\in \bb{N}$, are dominated pointwise
by an integrable function. Now the Lebesgue Dominated Convergence Theorem implies that
\[
\int_{G\times G} w_k(t,s) (\lambda_t T \lambda_s\xi,\eta) d\mu(t) d\nu(s)
\to \int_{G\times G} w(t,s) (\lambda_t T \lambda_s\xi,\eta) d\mu(t) d\nu(s)
\]
and this, together with (\ref{eq_Phi}) and the second paragraph of the proof, shows that
(\ref{eq_wdotu}) holds for the function $w$.
\end{proof}

If $f\in L^{\infty}(G)$, let $M_f$ be the operator on $L^2(G)$ of multiplication by $f$,
and set $\cl D = \{M_f : f\in L^{\infty}(G)\}$.
Recall that, for $\mu\in M(G)$, we let $\Theta(\mu)$ be the completely bounded linear map
on $\cl B(L^2(G))$ given by
\[
\Theta(\mu)(T) = \int_G \lambda_s T \lambda_s^* d\mu(s), \ \ \ T\in \cl B(L^2(G)),
\]
where the intergral is understood in the weak sense.
It is well-known that
$\Theta$ maps $M(G)$ onto the space of all
completely bounded weak* continuous $\vn(G)'$-bimodule maps
that leave $\cl D$ invariant (see \cite{neufang_thesis} and \cite[Theorem~3.2]{nrs}).

We recall some notions from \cite{a} and \cite{eks}.
A measurable set
$\kappa\subseteq G\times G$ is called  \emph{marginally null}
if there exists a null set $M\subseteq G$ such that
$\kappa\subseteq (M\times G)\cup (G\times M)$.
Two measurable sets $\kappa_1$ and $\kappa_2$ of $G\times G$ are called
\emph{$\omega$-equivalent} (written $\kappa_1\cong \kappa_2$)
if their symmetric difference is marginally null;
we say that $\kappa_1$ is \emph{marginally contained} in $\kappa_2$
if $\kappa_1\setminus \kappa_2$ is marginally null.
The set $\kappa$ is called \emph{$\omega$-open} if it is $\omega$-equivalent to a
countable union of sets of the form $\alpha\times\beta$, where $\alpha, \beta\subseteq G$
are measurable. It is called \emph{$\omega$-closed} if its complement is $\omega$-open.
For measurable $\alpha \subseteq G$, let $P(\alpha) \in \cl D$ be the
projection given by multiplication by the characteristic function of $\alpha$.
An operator $T\in \cl B(L^2(G))$ is said to be \emph{supported by} $\kappa$
if $P(\beta)TP(\alpha) = 0$ whenever $\alpha,\beta\subseteq G$ are
measurable sets with $(\alpha\times\beta)\cap \kappa \cong \emptyset$.
The \emph{$\omega$-support} of a subset $\cl U\subseteq \cl B(L^2(G))$
is the smallest (with respect to marginal containment) $\omega$-closed set $\kappa$
such that every element of $\cl U$ is supported by $\kappa$.
It was shown in \cite{a} and \cite{eks} that,
given any $\omega$-closed set $\kappa$, there exist a largest weak* closed $\cl D$-bimodule
$\frak{M}_{\max}(\kappa)$ (namely, the space of all
operators on $L^2(G)$ supported by $\kappa$) and a smallest weak* closed $\cl D$-bimodule
$\frak{M}_{\min}(\kappa)$ with support $\kappa$.

We recall that for $E \subseteq G$, we write 
$E^* = \{(s,t) \in G\times G:\ ts^{-1} \in E\}$ and $E^\sharp = \{(s,t) \in G\times G: \ st\in E\}$.

\begin{proposition}\label{p_apabo}
Let $G$ be a second countable locally compact group, $E\subseteq G$ be a compact set
and $V\subseteq G$ be a compact neighbourhood of $e$. Then
$$\frak{M}_{\max}(E^*) \subseteq \overline{\frak{M}_{\max}((EV)^*)\cap \cl K}^{w^*}.$$
\end{proposition}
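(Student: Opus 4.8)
The plan is to approximate a given operator $T \in \frak{M}_{\max}(E^*)$ in the weak* topology by compact operators that are supported in the slightly larger $\omega$-closed set $(EV)^*$. The natural tool is convolution-type smoothing implemented by the averaging maps $\Theta(\mu)$ (or rather, the module maps associated to elements of $\vn_{\eh}(G)$): averaging $T$ against an approximate identity supported in a small neighbourhood $W$ of $e$ should yield operators whose $\omega$-support has been enlarged by roughly $W$, and — crucially — the averaging should simultaneously regularise $T$ into a compact (or at least weak*-approximable-by-compact) operator. Concretely, fix a net $(h_\beta)$ of nonnegative continuous functions on $G$, each supported in a shrinking sequence of compact neighbourhoods $V_\beta \subseteq V$ of $e$, with $\int_G h_\beta(s)\,ds = 1$, and consider $\Theta(h_\beta\,ds)(T) = \int_G h_\beta(s)\,\lambda_s T \lambda_s^*\,ds$. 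One checks, using the covariance $\lambda_s P(\alpha)\lambda_s^* = P(s\alpha)$ and the fact that $T$ is supported in $E^* = \{(s,t): ts^{-1}\in E\}$, that each $\lambda_s T \lambda_s^*$ with $s \in V_\beta$ is supported in $(EV)^*$ (here one uses that $V$ is a neighbourhood of $e$ and $V_\beta \subseteq V$, together with closedness of $E$ and compactness to make the set arithmetic clean); hence so is the weak integral $\Theta(h_\beta\,ds)(T)$, as $\frak{M}_{\max}((EV)^*)$ is weak* closed.

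Next I would argue that $\Theta(h_\beta\,ds)(T) \to T$ in the weak* topology of $\cl B(L^2(G))$: for vector functionals $\omega_{\xi,\eta}$ this amounts to $\int_G h_\beta(s)\,(\lambda_s T\lambda_s^*\xi,\eta)\,ds \to (T\xi,\eta)$, which follows from norm-continuity of $s \mapsto \lambda_s T \lambda_s^* \xi$ combined with the bound $\|\lambda_s T \lambda_s^*\| = \|T\|$, so that the approximate-identity property of $(h_\beta)$ does the job; uniform boundedness then extends the convergence from vector functionals to all of $\cl B(L^2(G))_*$. So far this produces, for each $\beta$, an element of $\frak{M}_{\max}((EV)^*)$ approximating $T$; the remaining and genuinely delicate point is compactness.

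The main obstacle is to arrange that the approximants can be taken in $\cl K$. My plan is to exploit the extra averaging available because $E$ (hence the relevant slices) is compact: compress $\Theta(h_\beta\,ds)(T)$ by a projection $P(U)$ where $U \subseteq G$ is a compact set chosen so that $P(U)\,\Theta(h_\beta\,ds)(T)\,P(U)$ still lies in $\frak{M}_{\max}((EV)^*)$ and still weak*-approximates $T$ — this is possible because the support condition $ts^{-1}\in EV$ confines, for each fixed $s$ in a bounded region, the relevant range of $t$ to a bounded region as well; but a compression of a bounded operator need not be compact. To genuinely land in $\cl K$ one should smooth in \emph{both} variables, i.e. replace $T$ by $M_{g} \,\Theta(h_\beta\,ds)(T)\, M_{g'}$ or, better, by an integral of the form $\int\int k(s,t)\,\lambda_s T \lambda_t \,\dots$ against an $L^2 \otimes L^2$ kernel supported near the diagonal of $EV \times (EV)$ scaled appropriately; operators of the form $\int_{G\times G} k(s,t)\,\lambda_s T \lambda_t\,ds\,dt$ with $k \in C_c(G\times G)$ are Hilbert–Schmidt (one verifies this by writing the kernel of the resulting operator explicitly, using that $T$ is bounded and $k$ has compact support), hence compact, and the support-enlargement analysis above controls where they live. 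Thus the final argument is: choose $k_\beta \in C_c(G\times G)$ supported in a shrinking neighbourhood of the antidiagonal, normalised, with the property that the associated double-averaging map $\Psi_{k_\beta}$ (an element of the $\vn(G)'$-bimodule map calculus of Section 2, realised via an element of $\vn_{\eh}(G)$) sends $T$ to a Hilbert–Schmidt operator supported in $(EV)^*$, and converges to the identity weak*-pointwise on $\cl B(L^2(G))$ by the same approximate-identity reasoning; then $\Psi_{k_\beta}(T) \in \frak{M}_{\max}((EV)^*) \cap \cl K$ and $\Psi_{k_\beta}(T) \to T$ weak*. I expect verifying the precise support inclusion $\supp \Psi_{k_\beta}(T) \subseteq (EV)^*$ — i.e.\ that the double convolution only enlarges the $\omega$-support by the prescribed amount, working modulo marginal null sets — to be the step requiring the most care, since it must be done at the level of $\omega$-closed sets and $\cl D$-bimodule supports rather than naive set multiplication.
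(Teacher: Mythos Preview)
Your approach has a genuine gap at the compactness step. The claim that operators of the form $\int_{G\times G} k(s,t)\,\lambda_s T \lambda_t\,ds\,dt$ with $k\in C_c(G\times G)$ are Hilbert--Schmidt is false. Take $T=I$: the double integral collapses to $\lambda(f)$ for some $f\in C_c(G)$, and $\lambda(f)$ is not compact on $L^2(G)$ for non-compact $G$. More generally, these group averages are just weak*-convergent integrals of operators unitarily equivalent to $T$, so they inherit the norm of $T$ but none of the smoothing you need; there is no kernel to write down because $T$ is merely bounded. The compression $M_g(\cdot)M_{g'}$ does not help either, since multiplication operators are not compact. Your support analysis also needs repair: $\lambda_s T\lambda_s^*$ is supported in $(sEs^{-1})^*$, not automatically in $(EV)^*$; this is fixable by choosing the neighbourhood small enough (continuity of conjugation on the compact set $E$), but it is a symptom of the fact that group averaging is not the right tool here.

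The paper's proof avoids group structure entirely for this step. It uses that $(EU)^*$ is $\omega$-open (for $U$ the interior of $V$), hence marginally a countable union of measurable rectangles $\alpha_i\times\beta_i$. By the Erdos--Katavolos--Shulman lemma, on a large set $L_\epsilon\times L_\epsilon$ (with $|L_\epsilon^c|<\epsilon$) this union is \emph{finite}. Any $T\in\frak{M}_{\max}(E^*)$, compressed to $L_\epsilon\times L_\epsilon$, then decomposes as a finite sum of pieces $P(\tau_q)TP(\sigma_p)$ living in rectangles contained in $(EV)^*$; each such corner $P(\tau_q)\cl B(L^2(G))P(\sigma_p)$ has weak*-dense compacts (finite ranks are weak* dense in $\cl B(L^2(G))$ and survive the compression). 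Letting $\epsilon\to 0$ recovers $T$. So compactness comes from the elementary density of finite ranks inside rectangles, and the enlargement $E\rightsquigarrow EV$ is used only to pass from the $\omega$-closed $E^*$ to an $\omega$-open set admitting the rectangle decomposition.
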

\begin{proof}
Let $U\subseteq G$ be an open set such that $e\in U$ and $\overline{U} = V$.
The set $EU$ is open, and hence $(EU)^*$ is $\omega$-open.
Write $(EU)^* \cong \cup_{i=1}^{\infty} \alpha_i\times \beta_i$, where
$\alpha_i,\beta_i\subseteq G$ are measuarble subsets.
Let $\epsilon > 0$. By \cite[Lemma 3.4]{eks}, there exist $l_{\epsilon}\in \bb{N}$ and a measurable subset $L_{\epsilon}\subseteq G$
such that $|L_{\epsilon}^c| < \epsilon$ and
\begin{equation}\label{eq_lep}
(EU)^*\cap (L_{\epsilon} \times L_{\epsilon}) \subseteq \cup_{i=1}^{l_{\epsilon}} \alpha_i\times\beta_i.
\end{equation}
It is easy to see that there exist (finite) families $\{\sigma_p\}_{p=1}^M$ and $\{\tau_q\}_{q=1}^N$
of pairwise disjoint measurable subsets of $G$
and a subset $R\subseteq \{1,\dots,M\}\times\{1,\dots,N\}$
such that
\begin{equation}\label{eq_epq}
\cup_{i=1}^{l_{\epsilon}} \alpha_i\times\beta_i = \cup_{(p,q)\in R} \sigma_p\times\tau_q.
\end{equation}
For each $p,q\in \{1,\dots,M\}\times\{1,\dots,N\}$, let
$\Pi_{p,q} : \cl B(L^2(G))\to \cl B(L^2(G))$ be the idempotent given by
$$\Pi_{p,q}(T) = P(\tau_q)TP(\sigma_p), \ \ \ T\in \cl B(L^2(G)).$$
By (\ref{eq_lep}) and (\ref{eq_epq}),
$$T = \sum_{(p,q)\in R} \Pi_{p,q}(T), \ \mbox{ for every } T\in \frak{M}_{\max}(E^*\cap (L_{\epsilon} \times L_{\epsilon})).$$
However, $\Pi_{p,q}(T)\in \frak{M}_{\max}(\sigma_p\times\tau_q)$,
while
$$\frak{M}_{\max}(\sigma_p\times\tau_q)
\subseteq \overline{\frak{M}_{\max}(\sigma_p\times\tau_q)\cap \cl K}^{w^*}
\subseteq \overline{\frak{M}_{\max}((EV)^*)\cap \cl K}^{w^*},$$
whenever $(p,q)\in R$.
It follows that
$$\frak{M}_{\max}(E^*\cap (L_{\epsilon} \times L_{\epsilon}))
\subseteq \overline{\frak{M}_{\max}((EV)^*)\cap \cl K}^{w^*}.$$
On the other hand, 
$$\frak{M}_{\max}(E^*\cap (L_{\epsilon} \times L_{\epsilon})) 
= P(L_{\epsilon})\frak{M}_{\max}(E^*) P(L_{\epsilon})$$
and the claim follows after passing to a limit as $\epsilon \to 0$.
\end{proof}

\begin{lemma}\label{l_inmmax}
If $(\kappa_m)_{m\in \bb{N}}$ is a decreasing sequence of $\omega$-closed sets,
then $\cap_{m\in \bb{N}} \frak{M}_{\max}(\kappa_m) = \frak{M}_{\max}(\cap_{m\in \bb{N}}\kappa_m)$.
\end{lemma}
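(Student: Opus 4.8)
The plan is to prove the two inclusions separately, the easy one being $\frak{M}_{\max}(\cap_m \kappa_m) \subseteq \cap_m \frak{M}_{\max}(\kappa_m)$, which is immediate from monotonicity of $\frak{M}_{\max}$: if $T$ is supported by $\cap_m\kappa_m$ then it is supported by each $\kappa_m$ (since $\cap_m \kappa_m$ is marginally contained in every $\kappa_m$, any rectangle $\alpha\times\beta$ meeting $\kappa_m$ marginally trivially also meets $\cap_m\kappa_m$ marginally trivially — wait, that is the wrong direction). Let me restate: being supported by a smaller $\omega$-closed set is a stronger condition, so $\frak{M}_{\max}(\cap_m\kappa_m)\subseteq\frak{M}_{\max}(\kappa_m)$ for each $m$, whence $\frak{M}_{\max}(\cap_m\kappa_m)\subseteq\cap_m\frak{M}_{\max}(\kappa_m)$.

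For the reverse inclusion, I would take $T\in \cap_{m\in\bb N}\frak{M}_{\max}(\kappa_m)$ and show $T$ is supported by $\kappa := \cap_m\kappa_m$. So fix measurable $\alpha,\beta\subseteq G$ with $(\alpha\times\beta)\cap\kappa\cong\emptyset$; I must show $P(\beta)TP(\alpha) = 0$. The point is that $(\alpha\times\beta)\cap\kappa_m$ need not be marginally null for any single $m$, but these sets decrease to the marginally null set $(\alpha\times\beta)\cap\kappa$. First reduce to the case where $\alpha,\beta$ have finite measure, by writing $G = \cup_j G_j$ with $|G_j|<\infty$ and checking $P(\beta\cap G_k)TP(\alpha\cap G_j) = 0$ for all $j,k$; so assume $|\alpha|,|\beta|<\infty$. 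Now set $\kappa_m' = (\alpha\times\beta)\cap\kappa_m$, a decreasing sequence of measurable subsets of $\alpha\times\beta$ with $\cap_m\kappa_m'\cong\emptyset$, i.e. $\cap_m\kappa_m'$ is marginally null inside a finite-measure rectangle, hence genuinely null for the product measure. By continuity of measure from above (finite total mass), $|\kappa_m'|\to 0$. Using \cite[Lemma 3.4]{eks} applied to $\kappa_m'$ (an $\omega$-open superset, or rather using that $T\in\frak{M}_{\max}(\kappa_m)$ is supported by $\kappa_m$ hence by any $\omega$-closed set marginally containing it), I would approximate $\kappa_m'$ from inside $\alpha\times\beta$ by a finite union of rectangles up to a set of small measure, as in the proof of Proposition \ref{p_apabo}: there exist measurable $L_m\subseteq G$ with $|L_m^c|$ small and finite families $\{\sigma_p\},\{\tau_q\}$ with $(\kappa_m'\cap (L_m\times L_m))\subseteq \cup_{(p,q)\in R_m}\sigma_p\times\tau_q$, while the complementary piece is controlled because $|\kappa_m'|$ itself is small. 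Since $T$ is supported by $\kappa_m\supseteq$ (marginally) $(\alpha\times\beta)\setminus\kappa_m'$ is false — rather, $(\alpha\times\beta)\cap\kappa_m = \kappa_m'$, so for a rectangle $\sigma\times\tau\subseteq\alpha\times\beta$ with $(\sigma\times\tau)\cap\kappa_m'\cong\emptyset$ we get $P(\tau)TP(\sigma) = 0$. Writing $P(\beta)TP(\alpha)$ in block form along a partition refining both $\{\sigma_p\}$ and $\{\tau_q\}$ and a good set $L_m$, all blocks outside $\cup\sigma_p\times\tau_q$ vanish, and the remaining blocks sit over a set of small measure; letting $m\to\infty$ (so that both the measure of the exceptional set and $|\kappa_m'|$ go to $0$), a standard estimate on $\|P(\beta)TP(\alpha)\|$ via the pseudo-integral / block decomposition forces $P(\beta)TP(\alpha) = 0$.

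The main obstacle I anticipate is the last step: passing from "$P(\beta)TP(\alpha)$ is, up to an error supported on a rectangle-block set of measure $\varepsilon_m\to 0$, a sum of blocks each killed by the hypothesis" to "$P(\beta)TP(\alpha)=0$". One clean way to sidestep the quantitative estimate is to argue instead with projections directly: produce, for each $\varepsilon>0$ and each $m$, measurable $L_m$ with $|L_m^c|<\varepsilon$ such that $P(L_m\cap\beta)TP(L_m\cap\alpha) = 0$ (because on the good set $L_m$ the intersection $\kappa_m'$ is covered by rectangles avoiding which $T$ must vanish), and then let $\varepsilon\to 0$ along a sequence so that $P(L_m)\to I$ strongly, giving $P(\beta)TP(\alpha) = \lim P(L_m\cap\beta)TP(L_m\cap\alpha) = 0$ — provided one also arranges the cover so that $\kappa_m'\cap(L_m\times L_m)$, not merely a large-measure subset of it, is contained in the finite rectangle union, which is exactly what \cite[Lemma 3.4]{eks} supplies for the $\omega$-open set $(\alpha\times\beta)\setminus\kappa_m'$ has... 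I would cite \cite[Lemma 3.4]{eks} in the form used in Proposition \ref{p_apabo}, apply it to the $\omega$-open set obtained as the complement-in-$(\alpha\times\beta)$ reasoning is delicate, so the careful handling of marginal versus genuine null sets inside finite-measure rectangles, and the interchange of the two limits (in $m$ and in the approximation parameter), is where I would spend the real effort.
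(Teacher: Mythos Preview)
Your easy inclusion is correct. For the hard inclusion, however, your plan has a genuine gap and is working against the grain of the problem.

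You try to fix an $m$, look at $\kappa_m' = (\alpha\times\beta)\cap\kappa_m$, and exploit that $|\kappa_m'|\to 0$ in product measure. But product measure is the wrong invariant here: an operator can be supported by a set of product measure zero that is \emph{not} marginally null (the diagonal supports the identity, for instance). So knowing that $P(\beta)TP(\alpha)$ is, for each $m$, ``supported on $\kappa_m'$ up to blocks where $T$ vanishes'' with $|\kappa_m'|\to 0$ gives no control on $P(\beta)TP(\alpha)$. Your fallback idea --- to find $L_m$ with $|L_m^c|$ small and $P(L_m\cap\beta)TP(L_m\cap\alpha)=0$ --- would require $(L_m\cap\alpha)\times(L_m\cap\beta)$ to miss $\kappa_m$ marginally, but there is no reason this should hold for any single $m$; in general $\kappa_m'$ may be, say, a full diagonal inside $\alpha\times\beta$, which no co-small $L_m$ can avoid.

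The paper's argument avoids this by never working one $m$ at a time. Write each $\kappa_m^c\cong\bigcup_k \alpha_k^m\times\beta_k^m$; then $\kappa^c=\bigcup_m\kappa_m^c\cong\bigcup_{k,m}\alpha_k^m\times\beta_k^m$ is a \emph{single} countable rectangle union. After deleting marginal null sets one has $\alpha\times\beta\subseteq\kappa^c$, and \cite[Lemma~3.4]{eks} applied once to this union yields compacts $K_n\uparrow G$ (up to a null set) and, for each $n$, a finite $l_n$ with
\[
(\alpha\cap K_n)\times(\beta\cap K_n)\ \subseteq\ \bigcup_{k,m=1}^{l_n}\alpha_k^m\times\beta_k^m.
\]
Refining to a disjoint grid as in (\ref{eq_epq}), every elementary block $\sigma_p\times\tau_q$ that actually occurs lies inside some single $\alpha_{k'}^{m'}\times\beta_{k'}^{m'}\subseteq\kappa_{m'}^c$, so $T\in\frak{M}_{\max}(\kappa_{m'})$ gives $P(\tau_q)TP(\sigma_p)=0$ for that block. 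Summing the blocks gives $P(\beta\cap K_n)TP(\alpha\cap K_n)=0$, and letting $n\to\infty$ finishes. The point you were missing is that the finite cover mixes rectangles coming from \emph{different} $m$'s, and each block is killed by the hypothesis for its own $m$; no limit in $m$ and no measure estimate is needed.
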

\begin{proof}
Set $\kappa = \cap_{m\in \bb{N}}\kappa_m$. Since $\kappa\subseteq \kappa_m$,
we have that $\frak{M}_{\max}(\kappa)\subseteq \frak{M}_{\max}(\kappa_m)$, $m\in \bb{N}$;
thus, $\frak{M}_{\max}(\kappa)\subseteq \cap_{m\in \bb{N}}\frak{M}_{\max}(\kappa_m)$.

Assuming that $\kappa_m^c \cong \cup_{k\in \bb{N}} \alpha_k^m\times\beta_k^m$,
where $\alpha_k^m$ and $\beta_k^m$ are measurable subsets of $G$,
we have that $\kappa^c \cong \cup_{k,m\in \bb{N}} \alpha_k^m\times\beta_k^m$.
Suppose that $\alpha$ and $\beta$ are measurable subsets of $G$ such that
$\kappa\cap (\alpha\times\beta)\cong\emptyset$ and $T\in \cap_{m\in \bb{N}} \frak{M}_{\max}(\kappa_m)$.
By deleting null sets from $\alpha$ and $\beta$ if necessary,
we can assume that $\alpha\times\beta \subseteq \kappa^c$.
Using \cite[Lemma 3.4]{eks}, one can see that
there exists an increasing sequence $(K_n)_{n\in \bb{N}}$
of compact subsets of $G$ such that $G\setminus (\cup_{n\in \bb{N}} K_n)$ is null, and $l_n\in \bb{N}$
such that
$$(\alpha\cap K_n)\times (\beta\cap K_n)\subseteq \cup_{k,m=1}^{l_n} \alpha_k^m\times\beta_k^m.$$
Using a decomposition analogous to (\ref{eq_epq}),
we conclude that $P(\beta\cap K_n) T P(\alpha\cap K_n) = 0$.
Since this holds for all $n\in \bb{N}$, we conclude after passing to a limit that
$P(\beta)T P(\alpha) = 0$.
This shows that $T\in \frak{M}_{\max}(\kappa)$ and the proof is complete.
\end{proof}

For subsets $E,K\subseteq G$, write $\Omega_{E,K} = \overline{\cup_{s\in K} s E s^{-1}}$.

\begin{theorem}\label{th_invars}
Let $G$ be a second countable locally compact group
and $E$ and $K$ be compact subsets of $G$.
If  $u \in \ev(G)$, $\supp_{\rm h} (u) \subseteq E^{\sharp} \cap (K\times K)$,
and $w_K\in A(G\times G)$ is supported in $K\times K$
then $\Phi_{w_K\cdot u}$ maps $\frak{M}_{\max}(E^*)$ into $\frak{M}_{\max}((\Omega_{E,K}E)^*)$.
\end{theorem}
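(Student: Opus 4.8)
The plan is to reduce the statement, via the approximation machinery of Lemma~\ref{l_appr} and Lemma~\ref{l_compactco}, to the case where $u$ is an elementary tensor $\lambda(\mu)\otimes\lambda(\nu)$ with $\mu,\nu\in M(G)$, and then use the explicit integral formula of Proposition~\ref{p_meastp} together with the support hypothesis on $u$ to pin down the range of $\Phi_{w_K\cdot u}$. First I would observe that, since $\supp_{\hh}(u)\subseteq E^\sharp\cap(K\times K)$ and $w_K$ is supported in $K\times K$, Lemma~\ref{l_supp-equality}(iii) gives $\supp_{\hh}(w_K\cdot u)\subseteq E^\sharp\cap(K\times K)$ as well; moreover $w_K\cdot u$ is a genuine element of $\ev(G)$, so it is legitimate to speak of $\Phi_{w_K\cdot u}$. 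The target space $\frak{M}_{\max}((\Omega_{E,K}E)^*)$ is weak* closed and of the form $\frak{M}_{\max}(\kappa)$ for the $\omega$-closed set $\kappa=(\Omega_{E,K}E)^*$; by definition, to show an operator $S$ lies in it, one must verify $P(\beta)SP(\alpha)=0$ whenever $(\alpha\times\beta)\cap\kappa\cong\emptyset$, equivalently whenever $\beta\alpha^{-1}$ is (essentially) disjoint from $\Omega_{E,K}E$.

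The main computational step is as follows. Fix $T\in\frak{M}_{\max}(E^*)$, i.e.\ $P(\beta')TP(\alpha')=0$ whenever $\beta'(\alpha')^{-1}\cap E\cong\emptyset$. I want to show $(\Phi_{w_K\cdot u}(T)\xi,\eta)=0$ for $\xi$ supported in $\alpha$, $\eta$ supported in $\beta$, with $\beta\alpha^{-1}$ disjoint from $\Omega_{E,K}E$. Using the reduction to $u=\lambda(\mu)\otimes\lambda(\nu)$ (with $\mu,\nu$ supported in $K$, which one may arrange because of the $K\times K$-support condition — this is the point where I would need a lemma, analogous to those in the literature on localisation of the support, saying that the support condition forces the relevant measures to live on $K$), Proposition~\ref{p_meastp} gives
\[
(\Phi_{w_K\cdot u}(T)\xi,\eta)=\int_{G\times G} (w_K\cdot?)(t,s)\,(\lambda_t T\lambda_s\xi,\eta)\,d\mu(t)\,d\nu(s),
\]
where I write the multiplier schematically. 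Now $\lambda_s\xi$ is supported in $s\alpha$ and $\lambda_t^*\eta=\lambda_{t^{-1}}\eta$ is supported in $t^{-1}\beta$, so $(\lambda_t T\lambda_s\xi,\eta)=(T\lambda_s\xi,\lambda_{t^{-1}}\eta)$ vanishes unless $(t^{-1}\beta)(s\alpha)^{-1}=t^{-1}\beta\alpha^{-1}s^{-1}$ meets $E$ on a non-null set, i.e.\ unless $ts^{-1}\cdot(\text{something in }E)\cdot(\text{conjugation})$ lands in $\beta\alpha^{-1}$; unwinding, the integrand is supported where $\beta\alpha^{-1}$ meets $t(s^{-1}E s)s^{-1}\cdot s = t E' $ ... more precisely where $\beta\alpha^{-1}\cap t\,E\,s^{-1}\neq\emptyset$. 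Since $(t,s)\in K\times K$ (as $\mu,\nu$ live on $K$) and $\mathrm{supp}_\hh(u)\subseteq E^\sharp$ forces $ts\in E$ on the support, one gets $t\in E s^{-1}\subseteq EK^{-1}$ and hence $tEs^{-1}\subseteq (EK^{-1})E K^{-1}$, which after the conjugation bookkeeping is contained in $\Omega_{E,K}E\cdot(\text{conj})$; choosing $\beta\alpha^{-1}$ disjoint from $\Omega_{E,K}E$ then kills the integrand $|\mu|\times|\nu|$-a.e., so the integral is $0$.

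The hard part will be the bookkeeping in the previous paragraph: correctly tracking how the condition $\supp_\hh(u)\subseteq E^\sharp$ interacts with the conjugation appearing in $E^*$ (since $\frak{M}_{\max}(E^*)$ involves $ts^{-1}\in E$, not $st\in E$), and justifying that one may assume $\mu,\nu$ are supported in $K$ — this requires either a direct localisation argument for supports of elements of $\ev(G)$ written as w*-limits of elementary tensors, or a preliminary reduction multiplying $u$ by $\chi_{K\times K}$-type multipliers (e.g.\ $a\otimes a$ with $a\in A(G)\cap C_c(G)$ equal to $1$ on a neighbourhood of $K$), using Lemma~\ref{l_elt} and the already-established fact (cf.\ Lemma~\ref{l_supp-equality}) that such multiplication does not change $u$ on its support. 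Once this localisation is in place, the dominated-convergence passage from elementary tensors to general $u$ is routine, using Lemma~\ref{l_compactco} on compact operators together with Proposition~\ref{p_apabo} (which exhibits $\frak{M}_{\max}(E^*)$ as a weak* limit of compacts in a slightly enlarged module) and Lemma~\ref{l_inmmax} to intersect back down to the correct $\omega$-closed set $(\Omega_{E,K}E)^*$. I would conclude by combining: $\Phi_{w_K\cdot u}$ is weak* continuous, it maps the weak*-dense (in $\frak{M}_{\max}(E^*)$) subset $\frak{M}_{\max}((EV)^*)\cap\cl K$ into $\frak{M}_{\max}((\Omega_{E,K}E)^*)$ for every compact neighbourhood $V$ of $e$ by the elementary-tensor computation, and then let $V$ shrink to $\{e\}$ using Lemma~\ref{l_inmmax}, so that the image lands in $\bigcap_V \frak{M}_{\max}((\Omega_{E,KV}EV)^*)=\frak{M}_{\max}((\Omega_{E,K}E)^*)$.
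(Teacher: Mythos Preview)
Your overall architecture (approximate $u$ by elementary tensors, use the integral formula of Proposition~\ref{p_meastp}, test against $\xi,\eta$ with controlled supports, then shrink neighbourhoods via Proposition~\ref{p_apabo} and Lemma~\ref{l_inmmax}) matches the paper's. However, there is a genuine gap at the point you yourself flag as ``the hard part'': you cannot arrange that the approximating measures $\mu,\nu$ are supported in $K$, nor that the approximants respect $E^\sharp$. The net $(u_\alpha)$ produced by Lemma~\ref{l_appr} is obtained purely from Kaplansky density and carries no support information whatsoever; the condition $\supp_{\hh}(u)\subseteq E^\sharp\cap(K\times K)$ simply does not pass to $u_\alpha$. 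So in your integral
\[
\int_{G\times G} w_K(t,s)\,(\lambda_t T\lambda_s\xi,\eta)\,\tilde u_\alpha(t,s)\,dt\,ds
\]
you have the factor $w_K$ to cut down to $K\times K$, but nothing forces $ts\in E$ (or a neighbourhood of $E$) on the integration set, and your sentence ``$\supp_{\hh}(u)\subseteq E^\sharp$ forces $ts\in E$ on the support'' is exactly the step that fails after approximation.

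The paper's fix is not to localise the measures but to insert a second multiplier. One chooses an open $U_0$ with $\overline{U_0}\subseteq U$ and picks $w\in A(G\times G)$ supported in $(EU)^\sharp$ with $w\equiv 1$ on $(EU_0)^\sharp\cap(K\times K)\supseteq\supp_{\hh}(u)$; then $w\cdot u=u$, hence $w_K\cdot u = w_Kw\cdot u$, and after approximating one computes $\lim_\alpha\int w_K(s,t)\,w(s,t)\,(\lambda_s T\lambda_t\xi,\eta)\,\tilde u_\alpha(s,t)\,ds\,dt$. Now the integrand vanishes pointwise by a three-case split: $w_K$ kills $(s,t)\notin K\times K$; $w$ kills $(s,t)\notin(EU)^\sharp$; and on the remaining set $(K\times K)\cap(EU)^\sharp$ one has $t\in s^{-1}EU$, which combined with $T\in\frak M_{\max}((EV)^*)$ and the supports of $\lambda_t\xi$, $\lambda_{s^{-1}}\eta$ yields $y_0x_0^{-1}\in sEVt\subseteq sEVs^{-1}EU\subseteq\Omega_{EV,K}EU$, contradicting the choice of $L,M$. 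Your bookkeeping paragraph never reaches this conclusion precisely because you lack the constraint $st\in EU$ on the domain of integration. Once you add the auxiliary $w$, the rest of your outline (pass from $\frak M_{\max}((EV)^*)\cap\cl K$ to $\frak M_{\max}(E^*)$ by Proposition~\ref{p_apabo} and weak* continuity, then intersect over $U$ and $V$ via Lemma~\ref{l_inmmax}) is correct and is exactly what the paper does.
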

\begin{proof}
Suppose that $\supp_{\rm h}(u) \subseteq E^{\sharp}\cap (K\times K)$.
Let $U$ and $V$ be compact symmetric neighbourhoods of the neutral element $e$,
and let $U_0$ be an open symmetric neighbourhood of $e$  and $W$ an open set such that
$\overline{U_0}\subseteq W \subseteq U$.
Let $w\in A(G\times G)$ be a function supported in $(EU)^{\sharp}$
such that $w|_{(EU_0)^{\sharp}\cap (K\times K)} \equiv 1$.

Fix $T\in \frak{M}_{\max}((EV)^*)\cap \cl K$.
We claim that
\begin{equation}\label{eq_Om}
\Phi_{w_K\cdot u}(T)\in \frak{M}_{\max}((\Omega_{EV,K}EU)^*).
\end{equation}
Suppose that $L, M \subseteq G$ are measurable sets with
$(L \times M)\cap (\Omega_{EV,K}EU)^* \cong \emptyset$.
By deleting null sets from $L$ and $M$ if necessary,
we may assume that, in fact, $(L \times M)\cap (\Omega_{EV,K}EU)^* = \emptyset$,
that is,
\begin{equation}\label{eq_ovku}
(M L^{-1}) \cap (\Omega_{EV,K}EU) = \emptyset.
\end{equation}

Let $\xi\in L^2(G)$ (resp. $\eta\in L^2(G)$) be a function that
vanishes almost everywhere on $L^c$ (resp. $M^c$).
By Lemma \ref{l_appr}, there exists a uniformly bounded net
$(u_{\alpha})_{\alpha}\subseteq \lambda(L^1(G))\odot \lambda(L^1(G))$ such that
$u_{\alpha}\to_{\alpha} u$ in the weak* topology of $\ev(G)$.
Write $\tilde{u}_{\alpha}$ for the function on $G\times G$ corresponding to
some representative of $u_{\alpha}$, so that $u_{\alpha} = (\lambda\otimes\lambda)(\tilde{u}_{\alpha})$. 
Recall that $w_K$ is an element of $A(G\times G)$ and hence of $\ahg$;
by Proposition \ref{p_algtenin}, $w_K$ belongs to the $\|\cdot\|_{\rm cbm}$-closure of ${M^{\cb}A(G)\odot M^{\cb}A(G)}$.
Using  Lemmas \ref{l_inter} and \ref{l_compactco}  as well as Propositions \ref{p_cow} and \ref{p_meastp},
we have
\begin{eqnarray*}
(\Phi_{w_K\cdot u}(T)\xi,\eta)
& = &
(\Phi_{w_Kw\cdot u}(T)\xi,\eta)
= \lim\mbox{}_{\alpha} (\Phi_{w_Kw\cdot u_\alpha}(T)\xi,\eta)\\
& = &
\lim\mbox{}_{\alpha} \int_{G\times G} w_K(s,t) w(s,t) (\lambda_s T \lambda_t\xi,\eta)\tilde{u}_{\alpha}(s,t)dsdt.
\end{eqnarray*}
Set
$$h(s,t) = (\lambda_s T \lambda_t\xi,\eta), \ \ s,t\in G.$$
We claim that the integrand $w_K w h \tilde{u}_{\alpha}$ is identically zero.
We consider the following cases:

\medskip

\noindent {\it Case 1.} $(s,t)\not\in K\times K$. In this case, $w_K(s,t) = 0$.

\medskip

\noindent {\it Case 2.} $(s,t)\not\in (EU)^{\sharp}$. In this case, $w(s,t) = 0$.

\medskip

\noindent {\it Case 3.} $(s,t)\in (K\times K)\cap (EU)^{\sharp}$. We claim that,
in this case $h(s,t) = 0$. To see this, note that $\supp(\lambda_t\xi)\subseteq tL$
and $\supp(\lambda_{s^{-1}}\eta)\subseteq s^{-1}M$.
Since $T$ is supported by $(EV)^*$, it suffices to see that
$(tL \times s^{-1}M) \cap (EV)^* = \emptyset$.
Assume, by way of contradiction, that
there exist $x,y\in G$ with
$$(x,y)\in (tL \times s^{-1}M) \cap (EV)^*.$$
Write $x = tx_0$ and $y = s^{-1} y_0$ for some $x_0\in L$ and
$y_0\in M$. Then  	
$$s^{-1}y_0 x_0^{-1}t^{-1} = yx^{-1} \in EV,$$
and so
$$y_0 x_0^{-1} \in s EV t.$$
On the other hand, $st\in EU$ and hence
$t\in s^{-1}EU$. Thus,
$$y_0 x_0^{-1} \in s EV s^{-1} EU\subseteq \Omega_{EV,K} EU.$$
This contradicts (\ref{eq_ovku}).

\medskip

It now follows that $(\Phi_{w_K\cdot u}(T)\xi,\eta) = 0$
whenever $\xi = P(L)\xi$ and $\eta = P(M)\eta$; this implies that
$\Phi_{w_K\cdot u}(T)\in \frak{M}_{\max}((\Omega_{EV,K}EU)^*)$.

Let $(U_k)_{k\in \bb{N}}$ be a sequence of compact neighbourhoods of $e$
such that $\cap_{k\in \bb{N}} U_k = \{e\}$.
Note that
\begin{equation}\label{eq_intom}
\cap_{k\in \bb{N}} \Omega_{EV,K}EU_k \subseteq \overline{\Omega_{EV,K}E} = \Omega_{EV,K}E.
\end{equation}
To see (\ref{eq_intom}), assume that $t\in \cap_{k\in \bb{N}} \Omega_{EV,K}EU_k$ 
and, for every $k$, write $t = s_k t_k$, where $s_k\in \Omega_{V,K}E$ and $t_k\in U_k$.
Then $t_k\to_{k\to \infty} e$ and hence $s_k\to t$. The inclusion in (\ref{eq_intom}) is thus proved;
the equality follows from the fact that $\Omega_{EV,K}E$ is compact.

By Lemma \ref{l_inmmax},
$\Phi_{w_K\cdot u}(T)\in \frak{M}_{\max}((\Omega_{EV,K}E)^*)$.
We have thus shown that
$$\Phi_{w_K\cdot u}(\frak{M}_{\max}((EV)^*)\cap \cl K)\subseteq \frak{M}_{\max}((\Omega_{EV,K}E)^*).$$
By Proposition \ref{p_apabo} and the weak* continuity of $\Phi_{w_K\cdot u}$ we have that
\begin{equation}\label{eq_d}
\Phi_{w_K\cdot u}(\frak{M}_{\max}(E^*))\subseteq \frak{M}_{\max}((\Omega_{EV,K}E)^*).
\end{equation}

Let $(V_k)_{k\in \bb{N}}$ be a sequence of compact neighbourhoods of $e$
such that $\cap_{k\in \bb{N}} V_k = \{e\}$.
We claim that
\begin{equation}\label{eq_vkin}
\cap_{k\in \bb{N}} \Omega_{EV_k,K}E = \Omega_{E,K}E.
\end{equation}
To show (\ref{eq_vkin}), assume that $t\in \cap_{k\in \bb{N}} \Omega_{EV_k,K}E$.
For each $k\in \bb{N}$, there exist $s_k\in K$, $r_k, p_k\in E$ and $t_k\in V_k$ such that
$s_k r_kt_k s_k^{-1} p_k =  t$. Then $t_k\to e$ and by the compactness of $E$ and $K$ we have that
$t\in \Omega_{E,K}E$;  (\ref{eq_vkin}) is hence proved.
It now follows from (\ref{eq_d}) and Lemma \ref{l_inmmax} that
$$\Phi_{w_K\cdot u}(\frak{M}_{\max}(E^*))\subseteq \frak{M}_{\max}((\Omega_{E,K}E)^*).$$
\end{proof}

In the following corollaries, if $H$ is a closed subgroup of $G$,
we write $\vn(H)$ for the von Neumann subalgebra of $\vn(G)$ generated by $\lambda_s$, $s\in H$.

\begin{corollary}\label{c_no1}
Let $G$ be a second countable locally compact group,
$H\subseteq G$ be a compact
normal subgroup and $K\subseteq G$ be a compact subset.
If $\supp_h(u) \subseteq H^{\sharp} \cap (K\times K)$
then $\Phi_u$ leaves the von Neumann algebra $\cl M_H$ generated by $\vn(H)$ and $\cl D$ invariant.
\end{corollary}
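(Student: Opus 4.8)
The plan is to apply Theorem~\ref{th_invars} twice, once for each generating piece of $\cl M_H$, and then combine the two invariance statements. First I would observe that $\cl M_H$ is generated as a von Neumann algebra by $\vn(H)$ together with $\cl D = \{M_f : f\in L^\infty(G)\}$; since $\Phi_u$ is weak$^*$ continuous, it suffices to show that $\Phi_u$ maps each of these two subalgebras into $\cl M_H$, and that $\cl M_H$ is itself weak$^*$ closed (which it is, being a von Neumann algebra), and then invoke the fact that $\Phi_u$ is a $\vn(G)'$-bimodule map to handle products. More precisely, the strategy is: (a) identify $\cl M_H$ with a $\frak{M}_{\max}(\kappa)$-type bimodule over $\cl D$ for a suitable $\omega$-closed set $\kappa$; (b) show $\kappa$ contains the supports of both $\vn(H)$ and $\cl D$; (c) apply Theorem~\ref{th_invars} with $E = H$ to conclude $\Phi_u$ preserves $\frak{M}_{\max}(H^*)$, after checking that $\Omega_{H,K}H = H$ because $H$ is a \emph{normal} subgroup (so $sHs^{-1}=H$ for all $s$, whence $\Omega_{H,K}=H$ and $\Omega_{H,K}H = H$).

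The key computation is the identification of $\cl M_H$ with $\frak{M}_{\max}(H^*)$. Recall $H^* = \{(s,t) : ts^{-1}\in H\}$. I would argue that an operator $T\in\cl B(L^2(G))$ is supported by $H^*$ precisely when $T$ commutes with the right translations by elements of $H^\perp$-type sets — more concretely, that $\frak{M}_{\max}(H^*)$ is exactly the weak$^*$-closed $\cl D$-bimodule consisting of operators $T$ with $P(\beta)TP(\alpha)=0$ whenever $\beta\cap\alpha H = \emptyset$ (marginally). On the one hand $\cl D\subseteq\frak{M}_{\max}(H^*)$ since multiplication operators are supported on the diagonal $\Delta\subseteq H^*$ (as $e\in H$). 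On the other hand each $\lambda_h$, $h\in H$, is supported by $\{(s,hs):s\in G\}\subseteq H^*$, so $\vn(H)\subseteq\frak{M}_{\max}(H^*)$; since $\frak{M}_{\max}(H^*)$ is a weak$^*$-closed $\cl D$-bimodule (indeed an algebra, by normality of $H$ and the fact that $H^*$ is closed under the relevant "composition" of relations), it contains the von Neumann algebra $\cl M_H$ generated by $\vn(H)$ and $\cl D$. The reverse inclusion $\frak{M}_{\max}(H^*)\subseteq\cl M_H$ is the point where I expect to lean on the structure of $H^*$ as a group-type $\omega$-closed set together with the fact that $H$ is compact: using a Fubini/slicing argument one writes a general $T$ supported on $H^*$ as a weak$^*$-limit of operators built from $\cl D$ and finitely many $\lambda_h$'s via an averaging over $H$ against the (finite) Haar measure of $H$.

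The main obstacle will be establishing that $\cl M_H$ coincides with (not merely is contained in) $\frak{M}_{\max}(H^*)$ — i.e. the reverse inclusion — since Theorem~\ref{th_invars} only gives invariance of $\frak{M}_{\max}(H^*)$, and that is useless for concluding invariance of $\cl M_H$ unless the two coincide. If the equality $\cl M_H = \frak{M}_{\max}(H^*)$ turns out to require more care (e.g. if only $\cl M_H\subseteq\frak{M}_{\max}(H^*)$ is cheap), then the fallback plan is: show directly, using the bimodule property and weak$^*$-continuity of $\Phi_u$ together with the computation $\Phi_{w_K\cdot u}$ on rank-one operators from Proposition~\ref{p_meastp}, that $\Phi_u(\lambda_h)\in\cl M_H$ for $h\in H$ and $\Phi_u(M_f)\in\cl M_H$ for $f\in L^\infty(G)$, localising first via a net $w_K$ of compactly supported elements of $A(G\times G)$ that are $1$ on $K\times K$ (so that $\Phi_{w_K\cdot u}\to\Phi_u$ appropriately), and reconstruct $\Phi_u$ on all of $\cl M_H$ from these generators. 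Either way, the normality of $H$ is used exactly to force $\Omega_{E,K}E = H$ when $E = H$, so that Theorem~\ref{th_invars} returns the \emph{same} set $H^*$ rather than a larger one.
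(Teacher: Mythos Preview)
Your approach is essentially the paper's. The paper argues exactly as you do in your second and third paragraphs: since $H$ is normal, $\Omega_{H,K}\subseteq H$ and hence $\Omega_{H,K}H\subseteq H$; one then chooses $w_{\tilde K}\in A(G\times G)$ compactly supported and identically $1$ on $K\times K$, so that $w_{\tilde K}\cdot u = u$ (because $\supp_{\rm h}(u)\subseteq K\times K$), and Theorem~\ref{th_invars} gives $\Phi_u(\frak{M}_{\max}(H^*))\subseteq\frak{M}_{\max}(H^*)$ directly; the identification $\cl M_H=\frak{M}_{\max}(H^*)$ then finishes.

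Two remarks. First, your opening paragraph (apply the theorem ``twice, once for each generating piece'') is a detour you yourself abandon: Theorem~\ref{th_invars} already concerns the full bimodule $\frak{M}_{\max}(E^*)$, so there is no need to treat $\cl D$ and $\vn(H)$ separately once the identification is in hand. Second, the point you correctly flag as the main obstacle --- the reverse inclusion $\frak{M}_{\max}(H^*)\subseteq\cl M_H$ --- is not proved in the paper but simply cited from \cite{akt}; your averaging-over-$H$ sketch is in the right spirit, but you should treat this as a known result rather than attempt to reconstruct it. The one step that is slightly buried in your write-up is the passage from $\Phi_{w_K\cdot u}$ (which is what Theorem~\ref{th_invars} delivers) to $\Phi_u$ itself: you mention choosing $w_K\equiv 1$ on $K\times K$ only in your fallback plan, but it belongs in the main argument, and is precisely how the paper proceeds.
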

\begin{proof}
Under the stated assumptions, $\Omega_{H,K}\subseteq H$ and hence $\Omega_{H,K}H\subseteq HH\subseteq H$.
Suppose that $\supp_h(u) \subseteq H^\sharp \cap (K \times K)$ and
let $w_{\tilde{K}} \in \ahg$ be such that $w_{\tilde{K}} =1$ on $K \times K$ and
$\supp(w_{\tilde{K}}) \subseteq \tilde{K}$ for some compact set $\tilde{K}$ containing a neighbourhood of $K\times K$.
Then $w_{\tilde{K}}. u =u$ and, by Theorem~\ref{th_invars},
\[
\Phi_u(\frak{M}_{\max}(H^*)) = \Phi_{w_{\tilde{K}} \cdot u}(\frak{M}_{\max}(H^*)) \subseteq \frak{M}_{\max}(H^*).
\]
The claim now follows from the fact that  $\frak{M}_{\max}(H^*) = \cl M_H$
(see \cite{akt}).
\end{proof}

\begin{corollary}\label{c_no2}
Let $G$ be a second countable compact group
and $H\subseteq G$ be a closed normal subgroup.
If $\supp(u) \subseteq H^{\sharp}$
then $\Phi_u$ leaves the von Neumann algebra $\cl M_H$ generated by $\vn(H)$ and $\cl D$ invariant.
\end{corollary}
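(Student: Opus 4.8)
The plan is to deduce Corollary~\ref{c_no2} from Corollary~\ref{c_no1} by a compactness/approximation argument, reducing the case of a general closed normal subgroup $H$ of the compact group $G$ to the case of a compact normal subgroup (which is automatic here since $G$ itself is compact, so $K$ may be taken to be $G$). First I would note that, since $G$ is compact, $H$ is automatically a \emph{compact} normal subgroup, so the only genuine gap between the two statements is that Corollary~\ref{c_no1} requires $\supp_h(u)$ to be contained in $H^{\sharp}\cap(K\times K)$ for a \emph{compact} $K$, whereas here we only assume $\supp(u)\subseteq H^{\sharp}$; but with $G$ compact we may simply take $K = G$, which is compact, and $H^{\sharp}\cap(G\times G) = H^{\sharp}$. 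So in fact Corollary~\ref{c_no2} is an immediate instance of Corollary~\ref{c_no1}.

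Concretely, I would argue as follows. Since $G$ is compact and $H$ is closed, $H$ is a compact normal subgroup of $G$. Put $K = G$; then $K$ is a compact subset of $G$ and the hypothesis $\supp(u)\subseteq H^{\sharp}$ reads $\supp_h(u)\subseteq H^{\sharp}\cap(K\times K)$. Applying Corollary~\ref{c_no1} with this choice of $H$ and $K$, we conclude that $\Phi_u$ leaves the von Neumann algebra $\cl M_H$ generated by $\vn(H)$ and $\cl D$ invariant, which is exactly the assertion of Corollary~\ref{c_no2}.

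I do not expect any real obstacle here: the content is entirely in Theorem~\ref{th_invars} and Corollary~\ref{c_no1}, and the passage to Corollary~\ref{c_no2} is just the observation that compactness of $G$ makes the auxiliary compact set $K$ harmless (one takes $K=G$) and forces $H$ to be compact. The only point worth a sentence of care is that $\Omega_{H,K}H\subseteq H$: since $H$ is normal, $sHs^{-1}=H$ for every $s\in G$, so $\Omega_{H,K}=\overline{\bigcup_{s\in K}sHs^{-1}}=\overline{H}=H$ (using that $H$ is closed), whence $\Omega_{H,K}H\subseteq HH\subseteq H$; but this is already subsumed in the proof of Corollary~\ref{c_no1}. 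Thus the proof is a one-line reduction, and I would write it as such.

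\begin{proof}
Since $G$ is compact and $H$ is closed, $H$ is a compact normal subgroup of $G$.
Taking $K = G$, which is compact, the assumption $\supp(u) \subseteq H^{\sharp}$ is precisely the statement that $\supp_h(u) \subseteq H^{\sharp}\cap (K\times K)$.
The claim now follows from Corollary~\ref{c_no1}.
\end{proof}
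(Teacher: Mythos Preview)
Your proof is correct and takes essentially the same approach as the paper, which simply states that the result is immediate from Corollary~\ref{c_no1}. Your observation that one may take $K=G$ is exactly the point.
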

\begin{proof}
Immediate from Corollary \ref{c_no1}.
\end{proof}

\begin{proposition}\label{p_mv}
Let $u\in \ev(G)$ and $\mu\in M(G)$
be such that $\Phi_u = \Theta(\mu)$. Then
$$\langle u,v\rangle = \int_{G} v(s,s^{-1}) d\mu(s), \ \ \mbox{ for all }\ v\in \ahg.$$
\end{proposition}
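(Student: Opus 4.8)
The plan is to establish the formula first on a dense subclass of $\ahg$ where both sides can be computed directly, and then extend by continuity. The natural dense subclass is $\lambda(L^1(G))\odot\lambda(L^1(G))$ is on the dual side, but since we are pairing against $v\in\ahg$, the cleaner route is to use Proposition~\ref{p_meastp}. Indeed, I would first treat the case where $u$ itself is an elementary tensor $u=\lambda(\mu_1)\otimes\lambda(\mu_2)$ with $\mu_1,\mu_2\in M(G)$; by Lemma~\ref{l_appr} together with the weak* density of $\lambda(L^1(G))\odot\lambda(L^1(G))$ in $\ev(G)$ and the fact that $A(G)\odot A(G)$ is dense in $\ahg$, it suffices to understand how $\langle u,v\rangle$ is computed for such $u$ and to match it with $\int_G v(s,s^{-1})d\mu(s)$.

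The key computational step: take $v=w\cdot v_0$ for $w\in\ahg$ and $v_0$ suitable, or more directly, I would exploit \eqref{eq_wdotu} from Proposition~\ref{p_meastp}. Given $u\in\ev(G)$ with $\Phi_u=\Theta(\mu)$, and given $v\in A(G)\odot A(G)$, write $v=\sum_{k=1}^n\phi_k\otimes\psi_k$ with $\phi_k(s)=(\lambda_s f_k,g_k)$ and $\psi_k(s)=(\lambda_s f_k',g_k')$. Then using \eqref{eq_ux}-style identities (as in the proof of Lemma~\ref{l_compactco}), $\langle u,\phi_k\otimes\psi_k\rangle=(\Phi_u(f_k\otimes (g_k')^*)f_k',g_k)$ — more precisely one identifies the rank-one operator and the two vector functionals so that $\langle u,\phi\otimes\psi\rangle = (\Phi_u(T)\xi,\eta)$ for an appropriate $T$ of rank one and appropriate $\xi,\eta$. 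Substituting $\Phi_u=\Theta(\mu)$ gives $\langle u,\phi\otimes\psi\rangle=\int_G(\lambda_s T\lambda_s^*\xi,\eta)d\mu(s)$, and a short computation with rank-one operators shows $(\lambda_s T\lambda_s^*\xi,\eta)$ collapses to $\phi(s)\psi(s^{-1})$ when the vectors are chosen to represent $\phi\otimes\psi$; hence $\langle u,\phi\otimes\psi\rangle=\int_G\phi(s)\psi(s^{-1})d\mu(s)=\int_G(\phi\otimes\psi)(s,s^{-1})d\mu(s)$. By linearity this gives the formula for all $v\in A(G)\odot A(G)$.

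To pass from $A(G)\odot A(G)$ to all of $\ahg$, I would note that the left-hand side $v\mapsto\langle u,v\rangle$ is $\|\cdot\|_{\hh}$-continuous by definition of the duality \eqref{eq_dah}. For the right-hand side, one needs that $v\mapsto\int_G v(s,s^{-1})d\mu(s)$ is $\|\cdot\|_{\hh}$-continuous; this follows because $v\mapsto\hat{v}_e$ (in the notation of Lemma~\ref{l_newinde}, $\hat{v}_e(t)=v(e,t)$, so $v(s,s^{-1})$ is $\hat{v}$ evaluated appropriately — more directly, the restriction $v\mapsto v|_{\tilde{\Delta}}$ composed with the evaluation) is controlled by $\|v\|_\infty\le\|v\|_{\hh}$ via \eqref{eq_infh}, and $\mu$ has finite total variation, so $|\int_G v(s,s^{-1})d\mu(s)|\le\|v\|_\infty|\mu|(G)\le\|v\|_{\hh}|\mu|(G)$. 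Since both sides are $\|\cdot\|_{\hh}$-continuous and agree on the dense subspace $A(G)\odot A(G)$, they agree on all of $\ahg$.

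The main obstacle I anticipate is the bookkeeping in the rank-one computation: correctly matching the vectors $\xi,\eta$ and the operator $T$ so that $(\Phi_u(T)\xi,\eta)=\langle u,\phi\otimes\psi\rangle$ via \eqref{eq_ux}, and then verifying that $(\lambda_s T\lambda_s^*\xi,\eta)=\phi(s)\psi(s^{-1})$ for that specific choice. This is essentially the identity $(\lambda_s(f\otimes g^*)\lambda_s^*\xi,\eta)=(\xi,\lambda_s g)(\lambda_s f,\eta)$, and one must check that with $\phi(s)=(\lambda_s\xi,\eta)$ and $\psi$ represented accordingly, the product $(\lambda_s f,\eta)(\xi,\lambda_s g)$ indeed equals $\phi(s)\psi(s^{-1})$ — the appearance of $s^{-1}$ comes from $\lambda_s^*=\lambda_{s^{-1}}$ acting on the second slot. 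Once this identity is pinned down the rest is routine. An alternative to sidestep part of this would be to invoke Proposition~\ref{p_meastp} directly with $\mu=\nu$ replaced suitably, but since $\Theta(\mu)$ corresponds to the ``diagonal'' measure rather than a product measure, the honest rank-one computation seems unavoidable.
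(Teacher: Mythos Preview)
Your proposal is correct and follows essentially the same route as the paper: compute $\langle u,\phi\otimes\psi\rangle$ via the rank-one identity \eqref{eq_ux} (with $\phi(s)=(\lambda_s f,\eta)$, $\psi(s)=(\lambda_s\xi,g)$, $T=f\otimes g^*$), substitute $\Phi_u=\Theta(\mu)$ and use $(\lambda_s(f\otimes g^*)\lambda_s^*\xi,\eta)=(\lambda_s f,\eta)(\xi,\lambda_s g)=\phi(s)\psi(s^{-1})$, then extend from $A(G)\odot A(G)$ to $\ahg$ by the continuity estimate $\left|\int_G v(s,s^{-1})\,d\mu(s)\right|\le\|\mu\|\,\|v\|_{\hh}$. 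The bookkeeping you flagged as the main obstacle is exactly the computation the paper carries out, and your anticipated identity is the correct one (note the precise vector assignments above, which differ slightly from the $\phi(s)=(\lambda_s\xi,\eta)$ you wrote in passing).
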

\begin{proof}
Let $\xi,\eta, f, g \in L^2(G)$ and 
$\phi(s) = (\lambda_s f,\eta)$ and $\psi(s)=(\lambda_s \xi, g)$ ($s \in G$). Then by (33) we have
\begin{eqnarray*}
\langle u, \phi \otimes \psi\rangle = (\Phi_u(f \otimes g^*) \xi, \eta)
&=& \langle \Theta(\mu)(f \otimes g^*) \xi, \eta\rangle \\
&=& \left(\left(\int_G \lambda_s (f\otimes g^*) \lambda_s^* d\mu(s)\right)\xi,\eta\right)\\
& = &
\left(\left(\int_G (\lambda_s f)\otimes (\lambda_s g)^* d\mu(s)\right)\xi,\eta\right)\\
& = &
\int_G (\lambda_s f,\eta)(\xi, \lambda_s g) d\mu(s)\\
&=& \int_G \phi(s) \psi(s^{-1}) d\mu(s)\\
&=& \int_G (\phi \otimes \psi)(s,s^{-1}) d\mu(s).
\end{eqnarray*}
On the other hand, if $v\in \ahg$ then, by (\ref{eq_unif}),
$$\left|\int_{G} v(s,s^{-1}) d\mu(s)\right| \leq \|\mu\|\|v\|_{\infty} \leq
\|\mu\|\|v\|_{\hh}.$$
Thus, there exists a (unique) $u'\in \ev(G)$ such that
\begin{equation}\label{eq_vssi}
\langle u',v\rangle = \int_{G} v(s,s^{-1}) d\mu(s), \ \ \ v\in \ahg.
\end{equation}
As $A(G) \odot A(G)$ is dense in $A_h(G)$ and $\langle u,v\rangle = \langle u', v\rangle$ for every $v\in A(G)\odot A(G)$, we obtain the statement.
\end{proof}

We recall that $\tilde{\Delta} = \{(s,s^{-1}) : s\in G\}$ is the antidiagonal of $G$.

\begin{theorem}\label{th_antid}
Let $G$ be a second countable weakly amenable locally compact group
and $u\in \ev(G)$.
The following conditions are equivalent:

(i) \ $\supp_{\hh}(u) \subseteq \tilde{\Delta}$;

(ii) there exists $\mu\in M(G)$ such that $\Phi_u = \Theta(\mu)$.
\end{theorem}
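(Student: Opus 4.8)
The implication (ii)$\Rightarrow$(i) is immediate: if $\Phi_u = \Theta(\mu)$ then, by Proposition~\ref{p_mv}, $\langle u,v\rangle = \int_G v(s,s^{-1})\,d\mu(s)$ for every $v\in\ahg$. Since $\tilde{\Delta}$ is closed, any $v\in\ahg$ supported in an open set disjoint from $\tilde{\Delta}$ satisfies $v(s,s^{-1})=0$ for all $s\in G$, whence $\langle u,v\rangle = 0$; by the definition of the support it follows that $\supp_{\hh}(u)\subseteq\tilde{\Delta}$.

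For (i)$\Rightarrow$(ii) the plan is to cut $u$ down to compactly supported pieces, treat each piece with Corollary~\ref{c_no1}, and then pass to a limit. As $G$ is weakly amenable with some constant $C$, Theorem~\ref{p_HWA} together with Remark~\ref{r:in-A(G)xA(G)} furnishes a net $(w_\alpha)_\alpha$ in $A(G)\odot A(G)\subseteq\ahg$ of compactly supported elements with $\|w_\alpha\|_{\rm cbm}\le C$ and $w_\alpha v\to v$ in $\ahg$ for all $v\in\ahg$; fix compact $K_\alpha\subseteq G$ with $\supp(w_\alpha)\subseteq K_\alpha\times K_\alpha$. Put $u_\alpha = w_\alpha\cdot u\in\ev(G)$. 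By Lemma~\ref{l_supp-equality}(iii),
\[
\supp_{\hh}(u_\alpha)\ \subseteq\ \supp(w_\alpha)\cap\supp_{\hh}(u)\ \subseteq\ \tilde{\Delta}\cap(K_\alpha\times K_\alpha),
\]
which is compact. Now $\tilde{\Delta}=\{e\}^{\sharp}$ and $\{e\}$ is a compact normal subgroup with $\vn(\{e\})=\bb{C}I$, so $\cl M_{\{e\}}=\cl D$; hence Corollary~\ref{c_no1}, applied with $H=\{e\}$ and $K=K_\alpha$, shows that $\Phi_{u_\alpha}$ leaves $\cl D$ invariant. Since $\Phi_{u_\alpha}$ is also a completely bounded weak$^*$ continuous $\vn(G)'$-bimodule map, the structure theorem recalled at the beginning of this section (see \cite{neufang_thesis}, \cite{nrs}) provides $\mu_\alpha\in M(G)$ with $\Phi_{u_\alpha}=\Theta(\mu_\alpha)$.

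It remains to pass to the limit. By (\ref{eq_iw}) the net $(u_\alpha)_\alpha$ is bounded in $\ev(G)$, so $\|\Phi_{u_\alpha}\|_{\rm cb}=\|u_\alpha\|_{\eh}\le C\|u\|_{\eh}$; since $\Theta$ is isometric (equivalently, has bounded inverse by the open mapping theorem), $(\mu_\alpha)_\alpha$ is bounded in $M(G)=C_0(G)^*$, and after passing to a subnet we may assume $\mu_\alpha\to\mu$ weak$^*$ for some $\mu\in M(G)$. Fix $v\in\ahg$ and regard it as an element of $C_0(G\times G)$; because the first coordinate of $(s,s^{-1})$ escapes every compact subset of $G$ as $s\to\infty$, the function $s\mapsto v(s,s^{-1})$ belongs to $C_0(G)$. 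Combining $\langle u_\alpha,v\rangle=\langle u,w_\alpha v\rangle\to\langle u,v\rangle$ with Proposition~\ref{p_mv} for the pair $(u_\alpha,\mu_\alpha)$ yields
\[
\langle u,v\rangle\ =\ \lim_\alpha\langle u_\alpha,v\rangle\ =\ \lim_\alpha\int_G v(s,s^{-1})\,d\mu_\alpha(s)\ =\ \int_G v(s,s^{-1})\,d\mu(s).
\]
Specialising $v$ to elementary tensors $\phi\otimes\psi$ with $\phi(s)=(\lambda_s f,\eta)$ and $\psi(s)=(\lambda_s\xi,g)$ for $f,g,\xi,\eta\in L^2(G)$, and using (\ref{eq_ux}) on the left-hand side and the chain of identities from the proof of Proposition~\ref{p_mv} on the right-hand side, we obtain $(\Phi_u(f\otimes g^*)\xi,\eta)=(\Theta(\mu)(f\otimes g^*)\xi,\eta)$ for all $f,g,\xi,\eta$; hence $\Phi_u$ and $\Theta(\mu)$ agree on all finite-rank operators, and, both being weak$^*$ continuous, on all of $\cl B(L^2(G))$.

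I expect the main obstacle to be the limiting step: one must secure a norm bound on the net $(\mu_\alpha)_\alpha$ so that a weak$^*$ convergent subnet exists — this is where the bounded invertibility of $\Theta$ is needed — and one must exploit that the antidiagonal restriction $s\mapsto v(s,s^{-1})$ of an element $v\in\ahg$ is a $C_0$-function, so that it can be legitimately paired against the weak$^*$-limit measure $\mu$. The remaining ingredients, namely reducing to compact support via Theorem~\ref{p_HWA} and Lemma~\ref{l_supp-equality}, identifying the compactly supported pieces via Corollary~\ref{c_no1}, and the one-variable bookkeeping of Proposition~\ref{p_mv}, are routine.
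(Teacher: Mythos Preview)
Your proof is correct and follows essentially the same approach as the paper's: reduce to compactly supported pieces via weak amenability (Theorem~\ref{p_HWA}), apply Corollary~\ref{c_no1} with $H=\{e\}$ together with \cite{nrs} to realise each piece as $\Theta(\mu_\alpha)$, extract a weak$^*$ limit $\mu$, and verify $\Phi_u=\Theta(\mu)$ on rank-one operators. The only minor difference is in the limiting step: you route through the dual pairing and Proposition~\ref{p_mv} (using that $s\mapsto v(s,s^{-1})$ lies in $C_0(G)$) to obtain the integral formula for $u$ first, whereas the paper invokes Lemma~\ref{l_compactco} to handle convergence of $\Phi_{w_\alpha\cdot u}$ directly on compact operators; both routes lead to the same identification on finite-rank operators.
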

\begin{proof}
(i)$\Rightarrow$(ii)
Suppose that $\supp_{\hh}(u)\subseteq \tilde{\Delta}$.
For a compact set $K\subseteq G$ and
$w\in A(G\times G)$ with support in $K\times K$, we have,
by Lemma \ref{l_supp-equality}, that $\supp_{\hh}(w\cdot u)\subseteq \tilde{\Delta}\cap K\times K$.
Hence, by Corollary \ref{c_no1}, 
$$\Phi_{w\cdot u}(\cl D)\subseteq \cl D;$$
by \cite[Theorem~3.2]{nrs}, there exists a (unique) measure $\mu_{K,w}\in M(G)$
such that
\begin{equation}\label{eq_walphm}
\Phi_{w\cdot u} = \Theta(\mu_{K,w}).
\end{equation}

Since $G$ is weakly amenable, by Theorem~\ref{p_HWA},
there exist a constant $C>0$, compact sets $K_{\alpha}\subseteq G$
 and a  net $(w_\alpha)_{\alpha}$ of elements in
$ A(G) \odot  A(G)$ supported in $K_{\alpha}\times K_{\alpha}$
such that $\|w_\alpha\|_{\rm cbm}\leq C$ for all $\alpha$ and
$w_\alpha v\to v$ in $\ahg$ for all $v\in \ahg$.
Set $\mu_{\alpha} = \mu_{K_{\alpha},w_{\alpha}}$;
then, by Proposition~\ref{p_cow},
$$\|\mu_{\alpha}\| = \|\Theta(\mu_{\alpha})\|_{\cb} =
\|w_{\alpha}\cdot u\|_{\eh}\leq C\|u\|_{\eh}$$
for all $\alpha$.
Thus, the net $(\mu_{\alpha})_{\alpha}$ has a weak* cluster point;
we assume without loss of generality that
$\mu_{\alpha}\to_{\alpha}\mu$ in the weak* topology of $M(G)$.
Let $f,g,\xi,\eta\in L^2(G)$. Then
the functions $s\to (\lambda_{s^{-1}}\xi,g)$ and $s\to (f,\lambda_{s^{-1}}\eta)$
belong to $C_0(G)$ and
\begin{eqnarray*}
(\Theta(\mu_{\alpha})(f\otimes g^*)\xi,\eta)
& = &
\int_G(\lambda_s(f\otimes g^*)\lambda_s^*\xi,\eta)d\mu_{\alpha}(s)\\
& = &
\int_G((f\otimes g^*)(\lambda_{s^{-1}}\xi),\lambda_{s^{-1}}\eta)d\mu_{\alpha}(s)\\
& = &
\int_G (\lambda_{s^{-1}}\xi,g)(f,\lambda_{s^{-1}}\eta)d\mu_{\alpha}(s)\\
& \to_{\alpha} &
\int_G (\lambda_{s^{-1}}\xi,g)(f,\lambda_{s^{-1}}\eta)d\mu(s)\\
& = &
(\Theta(\mu)(f\otimes g^*)\xi,\eta).
\end{eqnarray*}
It follows that
\begin{equation}\label{eq_finr}
(\Theta(\mu_{\alpha})(T)\xi,\eta) \to_{\alpha} (\Theta(\mu)(T)\xi,\eta)
\end{equation}
whenever $T$ is an operator of finite rank.

On the other hand,
$w_{\alpha}v\to v$ for every $v\in \ahg$ and hence
$w_{\alpha}\cdot u\to u$ in the weak* topology of $\ev(G)$.
By (\ref{eq_iw}) and Lemma \ref{l_compactco},
$$(\Phi_{w_{\alpha}\cdot u}(T)\xi,\eta)\to_{\alpha} (\Phi_{u}(T)\xi,\eta),$$
for all $T\in \cl K$.
It now follows from (\ref{eq_finr}) and (\ref{eq_walphm}) that
$$\Phi_{u}(T) = \Theta(\mu)(T),$$
for every finite rank operator $T$. Since $\Phi_{u}$ and $\Theta(\mu)$ are
weak* continuous, we conclude that $\Phi_u = \Theta(\mu)$.

(ii)$\Rightarrow$(i) Suppose that $v\in \ahg$ vanishes on $\tilde{\Delta}$.
By Proposition \ref{p_mv},
$$\langle u,v\rangle = \int_{G} v(s,s^{-1}) d\mu(s) = 0.$$
It follows that $\supp(u)\subseteq \tilde{\Delta}$.
\end{proof}

\noindent {\bf Remark } Since the singletons are sets of synthesis
and $\tilde{\Delta} = \{e\}^{\sharp}$,
Theorem \ref{t:Moore-groups} implies that, if $G$ is a Moore group then
$\tilde{\Delta}$ is a set of spectral synthesis for $\ahg$.
Theorem \ref{th_antid} shows that this holds true for any
weakly amenable second countable locally compact group.
In fact, the result shows that $\tilde{\Delta}$ satisfies the stronger condition
of being a Helson set.

\end{document}